\author{
  \textbf{Konrad Schm\"udgen and Matthias Sch\"otz}\\
  Mathematisches Institut\\
  Universit\"at Leipzig
}
\newcommand{\refitem}[1] {\textit{(\ref{#1})}}
\numberwithin{equation}{section}
\newtheorem{lemma}{Lemma}[section]
\newtheorem{proposition}[lemma]{Proposition}
\newtheorem{theorem}[lemma]{Theorem}
\newtheorem{corollary}[lemma]{Corollary}
\newtheorem{definition}[lemma]{Definition}
\newtheorem{remark}[lemma]{Remark}
\newtheorem{example}[lemma]{Example}
\theoremstyle{nonumberplain}
\newtheorem{proof}{Proof}
\newcommand{\I}              {\mathrm{i}}
\newcommand{\Unit}           {\mathbbm{1}}
\newcommand{\argument}       {\ignorespaces{\,\cdot\,}\ignorespaces}
\newcommand{\abs}[2][]        {#1\lvert#2#1\rvert}
\newcommand{\norm}[2][]        {#1\lVert#2#1\rVert}
\newcommand{\CC}{\mathbbm{C}}
\newcommand{\RR}{\mathbbm{R}}
\newcommand{\NN}{\mathbbm{N}}
\newcommand{\set}[3][]{#1\{\,#2 \;#1|\; #3\,#1\}}
\newcommand{\bd}{{\mathrm{bd}}}
\newcommand{\characters}{\mathrm{char}}
\newcommand{\mloc}{\mathrm{\textup{-}loc}}
\newcommand{\Lin}{\mathrm{Lin}}
\newcommand{\Pos}{\mathrm{Pos}}
\renewcommand{\sb}{\mathrm{sb}}
\newcommand{\msb}{\mathrm{\textup{-}sb}}
\newcommand{\D}{\mathrm{d}}
\newcommand{\J}{\mathcal{J}}
\newcommand{\K}{\mathcal{K}}
\newcommand{\A}{\mathcal{A}}
\renewcommand{\S}{\mathcal{S}}
\newcommand{\B}{\mathcal{B}}
\newcommand{\C}{\mathcal{C}}
\newcommand{\T}{\mathcal{T}}
\newcommand{\Q}{\mathcal{Q}}
\newcommand{\cH}{\mathcal{H}}
\title{Positivstellens\"atze for Semirings}
\date{\today}
\begin{document}
\begin{onehalfspace}

\maketitle

\begin{abstract}
  In this paper we develop a number of results and notions concerning Positivstellens\"atze for semirings (preprimes)
  of commutative unital real algebras. First we reduce the Archimedean Positivstellensatz for semirings to the corresponding
  result for quadratic modules. Various applications of the Archimedean Positivstellensatz for semirings are investigated. A general Positivstellensatz with denominators is proved for filtered algebras with semirings.
  As an application we derive a denominator-free Positivstellensatz for the cylindrical extension of an algebra with Archimedean semiring.
  A large number of illustrating examples are given.\\[-0.6em]
 
  \noindent
  \textbf{AMS  Subject  Classification (2020)}: 13J30 (Primary) 12D15, 14P10, 44A60 (Secondary). \\[-0.6em]

  \noindent
  \textbf{Key words:}  Positivstellensatz, semiring,  semialgebraic set, preordering,  positive polynomial, moment problem, convex set
\end{abstract}


\section{Introduction}

The Archimedean Positivstellensatz is a fundamental result of real algebraic geometry. There exist a version for Archimedean quadratic
modules and another one for Archimedean semirings (preprimes), see e.g.  \cite[Theorem 5.4.4]{marshall} or \cite[Theorem 12.35]{sch17}.
The case of  quadratic modules and of preorderings is extensively studied in the literature, see e.g. \cite{jacobi, kumsch, cimpric09, cmn, marshall}.
In contrast, the case of semirings is less understood and 
no Hilbert space proof
seems to be known up to now. In general, squares of elements do not belong to a semiring. Hence a linear functional which is nonnegative on a semiring
is not necessarily a positive functional, 
so the GNS construction does not apply. 

The aim of this paper is to study Positivstellens\"atze for semirings.
Among others, we give a new approach to, and new applications of the Archimedean Positivstellensatz for semirings and we prove two new  Positivstellens\"atze for Non-Archimedean semirings.

The starting point is the observation  that for each Archimedean semiring $\S$ of $\A$,  
\begin{equation*}
    \S^\dagger \coloneqq \set[\big]{ a\in \A }{ a+\epsilon \Unit \in \S ~ \textup{ for all }~  \epsilon \in {(0,\infty)} }
\end{equation*}
is an Archimedean preordering (Theorem~\ref{archpre}). As a consequence, 
the Archimedean Positivstellensatz
for semirings follows at once from its counter-part for quadratic modules (Theorem~\ref{archpos}). Further, we develop a number of applications the Archimedean Positivstellensatz
for semirings to various situations.

The second half of the paper is devoted to Positivstellens\"atze for general semirings. Our approach is motivated by  M. Marshall's  extension \cite{M2}
of the Archimedean Positivstellensatz for  preorderings to the non-compact case.  Fix a polynomial $p$  which dominates the coordinate functions on a
semialgebraic set $\K$. Then \cite[Corllary 3.1]{M2} states  that for  nonnegative $f$ on $\K$ and sufficiently large $m\in \NN$ and any $\varepsilon >0$ there exists $\ell\in \NN$
such that $p^\ell(f+\varepsilon p^m)$ is in the prerordering; the case $p=1$ gives the Positivstellensatz in the compact case.

A  main result in this context is a general Positivstellensatz for a filtered algebra $\A$ and a semiring $\S$ with a cofinal element $s$ which is
compatible with the filtration $(\A^{(k)})_{k\in \NN_0}$.
The new ingredient is a symbol algebra 
$\A^\sb$ with semiring $\S^\sb$ that allows one to control the behaviour of elements at ''infinity". This theorem states that an element 
$a\in \A^{(k)}$ is strictly positive on $\S$-positive characters of $\A$ and on $\S^\sb$-positive characters of  $\A^\sb$ if and only if there exist 
$\ell\in \NN$, $\varepsilon >0$ such that $s^\ell a\in \varepsilon s^{k+\ell}+ \S$. 

The element $s^\ell$ plays the role of a denominator in the filtered  Positivstellensatz. We introduce the notion of an $s$-localizable semiring in order
to remove the denominator. Using this concept we derive another main result of the paper, a denominator-free Positivstellensatz for cylindrical extensions
of an algebra  with an Archimedean semiring (Theorem \ref{theorem:streifen}). A similar result for preorderings was obtained by V.~Powers \cite{powers}.
Also, we note that there is an important result of Marshall \cite{marshall10} about the description of  nonnegative polynomials on the  strip $[0,1]\times \RR$.

This paper is organized as follows. In Section \ref{basicdef} we collect basic definitions and facts used throughout this paper. In Section \ref{archsemirings}
we present our approach to the Archimedean Positivstellensatz for semirings. In Sections \ref{sharpening} and  \ref{sharpenpolyhedra} we use the 
Archimedean Positivstellensatz for semirings to derive  sharpenings of  well-known  Archimedean Positivstellens\"atze 
(Theorem \ref{auxsemiring} and Corollary~\ref{corollary:polyhedron1}) that might be useful in polynomial optimization.
In Section \ref{convexsets}  we apply the Archimedean Positivstellensatz to express positive polynomials on  compact {\it convex}
sets in terms of supporting polynomials. Section \ref{examplesarch} contains some examples and other applications of the Archimedean
Positivstellensatz for semirings. 

Section \ref{cofinal}  begins the study of the Non-Archimedean case. We develop two  useful concepts, cofinal elements
and localizable semirings. Cofinal elements will appear as denominators in the filtered Positivstellensatz and localizability will be used to remove denominators. 
In Section \ref{filtered} we investigate filtered algebras with semirings and define the corresponding symbol algebras and 
semirings. These notions enter in an essential way into the filtered Positivstellensatz proved in this section.  In Section \ref{examples}
we develop some examples for the filtered Positivstellensatz. In the last Section \ref{onedimextension} we consider a finitely
generated algebra $\A$  with an Archimedean semiring $\S$ and derive a denominator-free Positivstellensatz for the algebra 
$\A \otimes \RR[z]$ and the semiring $\S \otimes \Pos(J)$, where $\Pos(J)$ denotes the semiring of polynomials in $z$ that are
pointwise non-negative on an unbounded closed subset $J$ of $\RR$.

There are only very few   applications of Positivstellens\"atze for  semirings in the literature, unlike the case of quadratic
modules. For this reason we have included a large number of examples, explicit formulas and applications  to illustrate the results.

In general, denominator-free Positivstellens\"atze provide solvability criteria for the moment problem. Some of such applications are 
explicitely stated in Theorem \ref{auxsemiring} and Corollaries~\ref{corollary:polyhedron1} and \ref{onedimmp}.

\smallskip

Throughout this paper, $\A$ denotes a {\bf commutative unital $\RR$-algebra} and $\S$ is a {\bf  semiring} of $\A$.

For  real algebraic geometry we  refer to the  books \cite{PD}, \cite{marshall}, and the survey \cite{scheiderer}.

\section{Basic Definitions and Facts}\label{basicdef}
By a {\it convex cone} in $\A$ we mean a non-empty subset $\C$ of $\A$ such that $\C+\C\subseteq \C$ and $\lambda\cdot \C\subseteq \C$
for all $\lambda \geq 0$. If $\C$ is a convex cone in $\A$, then $\A$ becomes an ordered real vector space with respect to
the order relation $\preceq_\C$ defined by
\begin{equation} \label{preceq}
  a \preceq_\C b ~~ \textup{ if and only if} ~~ b-a\in \C.
\end{equation}
If no confusion can arise, we omit the subscript $\C$ and write simply $\preceq$.

\begin{definition}
  A convex cone $\C$ in $\A$ is called a\\
  $\bullet$\, \emph{quadratic module} of\, $\A$ if\, $\Unit\in \C$ and\, $a^2b \in \C$  for all $b\in \C$ and $a\in \A.$\\
  $\bullet$\, \emph{semiring} of\, $\A$ if\, $\Unit\in \C$ and\, $\C\cdot \C\subseteq \C$.\\
  $\bullet$\, \emph{preordering} of $\A$ if\, $\C$ is a quadratic module and a semiring.
\end{definition}

\begin{definition}
  Let $\S$ be a semiring of $\A$. A convex cone $\C$ in $\A$ is called an \emph{$\S$-module}
  if\, $\Unit\in \C$ and\, $ac\in \C$ for all $a\in \S$ and $c\in \C$.
\end{definition}
Note that $\S \subseteq \C$ for every $\S$-module $\C$.

In the literature ``semirings" are often called ``preprimes". 
Note that semirings are closed  under multiplication of elements and quadratic modules are invariant under multiplication by squares. 

The smallest preordering of $\A$ is the cone\, $\sum \A^2\coloneqq \set[\big]{ \sum_{i=1}^k a_i^2 }{ a_i\in {\A},\,  k\in \NN }$
of all finite sums of squares of elements of $\A$.

Let $f_1,\dots,f_r\in \A$. Then
\begin{align}\label{semiringf}
  \S
  =
  \S(f_1,\dots,f_r)
  \coloneqq
  \set[\Big]{ 
    \sum\nolimits_{n_1,\dots,n_r=0}^k \alpha_{n_1,\dots,n_r} f_1^{n_1}\cdots f_r^{n_r}
  }{ 
    \alpha_{n_1,\dots,n_r}\in [0,+\infty), k\in \NN_0
  }
\end{align}
is the {\it semiring generated by}\, $f_1,\dots,f_r$, 
and
\begin{align}\label{preorderingf}
  \T
  =
  \T(f_1,\dots,f_r)
  \coloneqq
  \set[\Big]{
    \sum\nolimits_{e=(e_1,\dots,e_k)\in \{0,1\}^k} f_1^{e_1}\cdots f_k^{e_k} \sigma_e
  }{
    \sigma_e\in \sum\A^2
  }
\end{align}
is the {\it preordering generated by}\, $f_1,\dots,f_r$.

\begin{definition}
  Let $\C$ be a convex cone in $\A$ such that $\Unit\in \A$. We define
  \begin{equation}\label{archcondition}
    \A^{\bd}(\C)
    \coloneqq
    \set[\big]{
      a\in \A
    }{
      \textup{there exists }~  \lambda \in (0,\infty)~ \textup{ such that } ~ (\lambda \Unit- a)\in \C ~ \textup{and}~ (\lambda\Unit +a)\in \C
    }
  \end{equation}
  The cone $\C$ is called \emph{Archimedean} if\, $\A^{\bd}(\C)=\A$.
\end{definition}

We note the following fact.

\begin{lemma} \label{archhilfs}
  Suppose $\S$ is a semiring of $\A$. If $a,b\in \A$ and $c,d\in \S$ fulfill
  $-c \preceq a \preceq c$ and $-d \preceq b \preceq d$, then $-cd \preceq ab \preceq cd$.
  In particular, $\A^\bd(\S)$ is a unital subalgebra of $\A$.
  
  Let\, $a_j, j\in J,$ be elements of $\A$. Suppose that the set $\{\Unit,a_j: j\in J\}$ generates the algebra $\A$. 
  Then $\S$ is Archimedean if and only if there exists  numbers $\alpha_j>0, \beta_j>0$ such that  $(\alpha_j\Unit + a_j)\in \S$ and $(\beta_j\Unit -a_j)\in \S$ for $j\in J$.
\end{lemma}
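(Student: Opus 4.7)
The plan is to prove the three assertions in the order they are stated; each follows from the previous one by short manipulations, so the bulk of the work is the product estimate at the start.

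First, for the product inequality, I would write out the four sign-combinations $(c\pm a)(d\pm b)$. Each of these lies in $\S$: by hypothesis $c\pm a,\, d\pm b\in\S$, and $\S$ is closed under multiplication. Then the identities
\begin{equation*}
  cd+ab \;=\; \tfrac{1}{2}\bigl((c-a)(d-b)+(c+a)(d+b)\bigr),
  \qquad
  cd-ab \;=\; \tfrac{1}{2}\bigl((c-a)(d+b)+(c+a)(d-b)\bigr)
\end{equation*}
together with closure of $\S$ under addition and nonnegative scalar multiplication give $cd\pm ab\in\S$, which is exactly $-cd\preceq ab\preceq cd$.

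For the second assertion, I need to show $\A^{\bd}(\S)$ is closed under addition, scalar multiplication and multiplication, and contains $\Unit$. Membership of $\Unit$ is immediate with $\lambda=1$, since $\Unit\pm\Unit\in\{0,2\Unit\}\subseteq\S$. If $a,b\in\A^{\bd}(\S)$ with witnesses $\lambda,\mu>0$, then $(\lambda+\mu)\Unit\pm(a+b)=(\lambda\Unit\pm a)+(\mu\Unit\pm b)\in\S$, giving $a+b\in\A^{\bd}(\S)$; scalar multiplication is handled by the same trick, splitting into $\alpha\geq0$ and $\alpha<0$ to choose the correct sign. For multiplicative closure I apply the first part with $c\coloneqq\lambda\Unit\in\S$ and $d\coloneqq\mu\Unit\in\S$, which yields $-\lambda\mu\Unit\preceq ab\preceq\lambda\mu\Unit$, so $ab\in\A^{\bd}(\S)$.

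For the third assertion, one direction is tautological: if $\S$ is Archimedean, then every $a_j$ lies in $\A^{\bd}(\S)=\A$, so the required $\alpha_j,\beta_j$ exist. For the converse, the hypotheses say exactly that $a_j\in\A^{\bd}(\S)$ for all $j\in J$, and $\Unit\in\A^{\bd}(\S)$ holds automatically. Since $\A^{\bd}(\S)$ is a unital subalgebra of $\A$ by the second assertion, it contains the subalgebra generated by $\{\Unit, a_j:j\in J\}$, which is all of $\A$; hence $\A^{\bd}(\S)=\A$ and $\S$ is Archimedean. I do not anticipate any real obstacle here — the only place where some care is needed is keeping track of signs in the product identities, and the observation that $\lambda\Unit\in\S$ for every $\lambda\geq 0$, which is what allows the convex cone bound $\lambda\Unit\pm a\in\S$ to feed into the multiplicative statement.
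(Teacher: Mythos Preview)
Your proposal is correct and follows essentially the same approach as the paper: the product identities you use for $cd\pm ab$ are exactly those in the paper's proof, and the remaining assertions are the details the paper leaves as ``easily checked''. There is nothing to add.
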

\begin{proof}
  The first assertion follows immediately from the identities
  \begin{align*}
    cd + ab
    =
    \frac{(c+a)(d+b) + (c-a)(d-b)}{2}
    \in
    \S
    ~~\textup{and}~~
    cd - ab
    =
    \frac{(c+a)(d-b) + (c-a)(d+b)}{2}
    \in
    \S
    .
    ~~
  \end{align*}
  The other assertions are now easily checked.
\end{proof}

In real algebraic geometry, $\A^\bd(\S)$\, is called the {\it algebra of\, $\S$-bounded elements} of $\A$, see e.g. \cite{marshall}, \cite{scheiderer}, \cite{schw02}.

Lemma \ref{archhilfs} is  useful if one wants to show that a semiring is Archimedean:
It suffices to verify the Archimedean condition in \eqref{archcondition} for a set of algebra generators.

We denote by $\characters(\A)$ the  characters  of $\A$, that is, $\characters(\A)$ is the set of unital algebra homomorphisms
$\varphi\colon\A\mapsto \RR$. For subsets $\C$ of $\A$ and $\K$ of $\characters(\A)$ we define
\begin{align}
  \characters (\A,\C)
  &\coloneqq
  \set[\big]{ \varphi\in \characters(\A) }{ \varphi(a)\geq 0 ~\textup{ for all }~ a\in \A },\\ 
  \label{eq:posJ}
  \Pos (\K)
  &\coloneqq
  \set[\big]{ a\in \A }{ \varphi(a)\geq 0 ~ \textup{ for all }~ \varphi\in \K }.
\end{align} 
 
We equip the algebraic dual $\A^*$ of the vector space $\A$ with the weak topology with respect to the functions
$\varphi \mapsto \varphi(a), \varphi\in \A^*, a\in \A$. These functions are continuous on $\A^*$ by definition.
It is well known that $\characters (\A,\C)$ is compact in the weak topology of $\A^*$ if $\C$ is an Archimedean convex cone. 

If $\A$ consists of functions on a set $X$, then for any $t\in X$ the {\it evaluation functional} $\chi_t$ defined by $\chi_t(f)\coloneqq f(t), f\in \A$, is a character of $\A$.

\begin{example}
  Let $\A$ be the polynomial algebra $\RR[x_1,\dots,x_d]$ and   $f_1,\dots,f_r\in \A$, where $r\in\NN$.  
  Let $\S$ denote the semiring \eqref{semiringf} and 
  $\T$ the preordering \eqref{preorderingf} of $\A$  generated by $f_1,\dots,f_r$. Then\,
  $\characters (\A,\S) = \characters (\A,\T)$ is the set of evaluation functionals
  $\chi_t$ at the points of the semialgebraic set 
  \begin{align}\label{semiset}
    \K(f_1,\dots,f_r) \coloneqq \set[\big]{ t\in\RR^d }{ f_1(t)\geq 0,\dots,f_r(t)\geq 0 }.
  \end{align}
  Sometimes we  identify points $t\in\RR^d$ with the corresponding point evaluations $\chi_t$.
\end{example}

\begin{definition}
  For $j=1,\dots,n$, let $\A_j$ be a  commutative unital real algebra and $\S_j$ a semiring of $\A_j$.
  Then we define the tensor product $\S$ of these semirings as the subset
  \begin{align}\label{tensorpr}
    \S
    \coloneqq
    \set[\Big]{
      \sum\nolimits_{i=1}^k c_{1i}\otimes c_{2i}\otimes\cdots\otimes c_{ni}
    }{
      c_{ji}\in \S_j ~ \textup{for}~ j=1,\dots,n; i=1,\dots,k~ \textup{ and }~ k\in \NN
    }
  \end{align}
  of the commutative unital real algebra $\A=\A_1\otimes\cdots\otimes\A_n$.
\end{definition}
It is easily verified that $\S$ as in \eqref{tensorpr} is indeed a semiring of $\A=\A_1\otimes\cdots\otimes\A_n$.
In general, $\S$ is not invariant under multiplication by squares of the algebra $\A$,
so $\S$ is only a semiring  even if each $S_j$ is a preordering.

We mention two  special cases of the construction  \eqref{tensorpr} which will appear again in later sections. 
The following  are ``natural" semirings of the polynomial algebra $\A\coloneqq\RR[x,y]\equiv \RR[x]\otimes \RR[y]$:
\begin{align*}
  \S_1
  &=
  \set[\Big]{
    \sum\nolimits_{j=1}^k
    \big((1-x^2)p_{j1}(x) +p_{j2}(x)\big)\, q_j(y)
  }{
    p_{j1},p_{j2}\in \sum\RR[x]^2,\, q_j\in \sum \RR[y]^2,\,k\in\NN
  },\\
  \S_2
  &=
  \set[\Big]{
    \sum\nolimits_{k,\ell=0}^m (1-x)^k (1+x)^\ell q_{k\ell}(y) 
  }{
    q_{kl}\in \sum\RR[y]^2,\, m\in \NN_0
  },
\end{align*}
and $\characters (\A,\S_1)=\characters (\A,\S_2)$ is the strip $[-1,1]\times \RR$ in $\RR^2$. Neither $\S_1$ nor $\S_2$ is a quadratic module.

The following notion is crucial throughout this paper.
\begin{definition}
  For a convex cone $\C$ in $\A$  such that $\Unit \in \C$, we set
  \begin{equation}
    C^\dagger \coloneqq \set[\big]{ a\in \A }{ \textup{there exists }~ c\in \C~ \textup{ such that }~ a+\epsilon c \in \C~ \textup{ for all }~ \epsilon \in {(0,\infty)} }.
    \label{eq:ddagger}
  \end{equation}
\end{definition}
\begin{lemma}
Suppose $\C$ is a convex cone  with $\Unit\in \C$. If $\C$ is Archimedean, then \eqref{eq:ddagger} simplifies to
\begin{equation}\label{dagger1}
  C^\dagger = \set[\big]{ a\in \A }{ a+\epsilon \Unit \in \C~ \textup{ for all }~ \epsilon \in {(0,\infty)} }.
\end{equation}
\end{lemma}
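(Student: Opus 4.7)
The inclusion from right to left is immediate: if $a+\epsilon \Unit\in\C$ for all $\epsilon>0$, then choosing the witness $c\coloneqq\Unit\in\C$ in the defining condition \eqref{eq:ddagger} shows that $a\in\C^\dagger$. So the real content is the reverse inclusion, and this is where Archimedicity will be used.

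For the other direction, suppose $a\in\C^\dagger$, so there exists some $c\in\C$ with $a+\epsilon c\in\C$ for every $\epsilon\in(0,\infty)$. The plan is to replace this witness $c$ by a multiple of $\Unit$. Since $\C$ is Archimedean, $\A^\bd(\C)=\A$ and in particular $c\in\A^\bd(\C)$, so there exists $\lambda\in(0,\infty)$ with $\lambda\Unit-c\in\C$. Fix an arbitrary $\epsilon\in(0,\infty)$ and set $\delta\coloneqq\epsilon/\lambda>0$. By hypothesis $a+\delta c\in\C$, and multiplying $\lambda\Unit-c\in\C$ by the positive scalar $\delta$ gives $\epsilon\Unit-\delta c\in\C$. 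Adding these two elements of the convex cone $\C$ yields $a+\epsilon\Unit\in\C$, which is what we wanted.

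There is essentially no obstacle beyond recognizing that the Archimedean property is exactly the tool needed to dominate an arbitrary witness $c$ by a scalar multiple of $\Unit$; the rest is a one-line convex combination. Note also that once this is done, the argument did not use multiplicative structure of $\C$ at all, so the lemma holds for any Archimedean convex cone containing $\Unit$, consistent with its stated generality.
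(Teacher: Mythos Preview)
Your proof is correct and essentially identical to the paper's: both use the Archimedean property to find $\lambda>0$ with $\lambda\Unit-c\in\C$, set $\epsilon'=\epsilon/\lambda$ (your $\delta$), and add $a+\epsilon'c\in\C$ to $\epsilon'(\lambda\Unit-c)\in\C$. The paper even has a typo (writing $a-\epsilon\Unit$ where $a+\epsilon\Unit$ is meant), which your version avoids.
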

\begin{proof} Suppose that $a$ and $c$ are as in \eqref{eq:ddagger}. Let\, $\epsilon \in (0,+\infty)$. Since
$\C$ is Archimedean, there exists a $\lambda >0$ such that $(\lambda\Unit- c)\in \C$. Set $\epsilon'\coloneqq\epsilon \lambda^{-1}$. Since  $a+\epsilon'c\in \C$, we have
$ a-\epsilon \Unit=(a+\epsilon' c ) + \epsilon'(\lambda \Unit- c)\in \C$, that is, $a$ belongs to the set in \eqref{dagger1}. The converse inclusion is trivial (take $c=1$).
\end{proof}

For preorderings  $\C$ of polynomial algebras the $^\dagger$-operation was introduced by S. Kuhlmann and M. Marshall \cite{kumarshall},
using a slightly different notation, and studied in \cite{kumarshall, kumsch, cmn, netzer}.

Clearly, $C^\dagger$ is again a convex cone in $\A$.
Note that because of $\Unit\in\C$ we have $\C\subseteq \C^\dagger$.
As shown in \cite[Proposition 1.1]{cmn}, $\C^\dagger$ is the sequential closure of $\C$ in the finest locally convex topology on the vector space $\C-\C$. 
 
If $\Q$ is a quadratic module of $\A$, then $\A=\Q-\Q$. Moreover,  if $\{a_1,\dots,a_r\}$ are generators of the algebra $\A$ and 
$\big(\lambda \Unit -\sum_{i=1}^r a_i^2\big)\in \Q$ for some $ \lambda >0$, then $\Q$ is Archimedean. Both facts are no longer
valid for semirings. Proposition \ref{semiringgen} below is a counterpart for the latter assertion in the case of semirings. General
speaking, semirings can be rather ''small" (for instance, $\S=\RR_+\cdot \Unit$ is a semiring) and they are  more subtle to deal with. 
 
If $\S$ is a semiring, it is easily verified that $\S-\S$ is a unital subalgebra of $\A$.
\begin{definition}
  A semiring is called \emph{generating} if\, $\A=\S-\S$.
\end{definition} 
An Archimedean semiring is always generating, since $a=\lambda \Unit -(\lambda \Unit -a)$ for $a\in \A$ and $\lambda \in \RR$.

\begin{proposition} \label{proposition:sdaggersemiring}
  If $\S$ is a generating semiring of $\A$ and $\C$ an $\S$-module, then $\S^\dagger$ is again a generating semiring of $\A$ and $\C^\dagger$ is an $\S^\dagger$-module.
\end{proposition}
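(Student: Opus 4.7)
The plan is to verify each property of $\S^\dagger$ and $\C^\dagger$ in turn. The easy structural facts come first: $\Unit\in\S\subseteq\S^\dagger$; both $\S^\dagger$ and $\C^\dagger$ are convex cones (add or scale the witnesses $c$ from \eqref{eq:ddagger}); and $\S^\dagger$ is generating because $\A=\S-\S\subseteq\S^\dagger-\S^\dagger$. The two substantive claims are (a) $\S^\dagger\cdot\S^\dagger\subseteq\S^\dagger$ and (b) $\S^\dagger\cdot\C^\dagger\subseteq\C^\dagger$, and both rely on the same maneuver.

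For (a), I take $a_1,a_2\in\S^\dagger$ with witnesses $c_1,c_2\in\S$, so that $a_i+\epsilon c_i\in\S$ for every $\epsilon>0$. Expanding
\begin{equation*}
(a_1+\epsilon c_1)(a_2+\epsilon c_2) = a_1 a_2 + \epsilon (a_1 c_2 + c_1 a_2) + \epsilon^2 c_1 c_2 \in \S
\end{equation*}
and using that $\S$ is generating to write $a_1 c_2 + c_1 a_2 = u - v$ with $u,v\in\S$, then adding $\epsilon v\in\S$, yields $a_1 a_2 + \epsilon u + \epsilon^2 c_1 c_2 \in \S$ for every $\epsilon>0$. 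The $\epsilon^2$-term is the heart of the matter: I still need to produce a single witness $c_\star\in\S$ with $a_1 a_2+\delta c_\star\in\S$ for all $\delta>0$ simultaneously. The candidate is $c_\star := u + c_1 c_2$; given $\delta>0$, I choose $\epsilon := \min\{\delta,\sqrt{\delta}\}$ so that both $\delta-\epsilon\geq 0$ and $\delta-\epsilon^2\geq 0$, and then
\begin{equation*}
a_1 a_2 + \delta c_\star = (a_1 a_2 + \epsilon u + \epsilon^2 c_1 c_2) + (\delta-\epsilon)\,u + (\delta-\epsilon^2)\,c_1 c_2 \in \S.
\end{equation*}

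For (b), the argument is essentially a copy with $\C$ replacing $\S$ as the landing cone. Given $a\in\S^\dagger$ and $b\in\C^\dagger$ with witnesses $c\in\S$ and $d\in\C$, the product $(a+\epsilon c)(b+\epsilon d)$ lies in $\C$ because $\C$ is an $\S$-module. Expanding produces the mixed term $ad+cb$, which I rewrite as a difference of $\C$-elements by using $\S\subseteq\C$ together with the generating property of $\S$ (so $\A=\C-\C$, and any element is a difference of $\C$-elements whose products with $c\in\S$ and $d\in\C$ stay in $\C$). Adding the negative part to both sides and choosing the same $\epsilon=\min\{\delta,\sqrt{\delta}\}$ produces a witness $e\in\C$ with $ab+\delta e\in\C$ for every $\delta>0$. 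The only mildly delicate point of the whole proof is the non-linear choice of $\epsilon$ that absorbs the $\epsilon$- and $\epsilon^2$-contributions into one witness; all other steps are formal.
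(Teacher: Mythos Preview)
Your proof is correct and follows essentially the same approach as the paper's: expand the product of perturbed elements, use the generating property to control the cross term, and make a non-linear choice of parameter to collapse the $\epsilon$- and $\epsilon^2$-contributions into a single witness. The paper organises the bookkeeping slightly differently---it first upgrades each witness $b_i$ to $d_i$ satisfying both $a_i+\delta d_i\in\C$ (resp.\ $\S$) and $d_i-a_i\in\C$ (resp.\ $\S$), then uses $\delta=-1+\sqrt{1+\epsilon}$ so that the identity $a_1a_2+\epsilon d_1d_2=(a_1+\delta d_1)(a_2+\delta d_2)+\delta\big((d_1-a_1)d_2+d_1(d_2-a_2)\big)$ lands directly in $\C$---whereas you split the cross term afterwards and use $\epsilon=\min\{\delta,\sqrt{\delta}\}$; but the underlying idea is the same, and the paper's version handles cases (a) and (b) in one stroke by taking $a_1\in\C^\dagger$, $a_2\in\S^\dagger$ from the start.
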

\begin{proof}
  It is clear that $\C^\dagger$ and $\S^\dagger$ are convex cones of $\A$  containing $\Unit$.
  Now let $a_1 \in \C^\dagger$ and $a_2\in \S^\dagger$ be given. Then there exist
  $b_1 \in \C$ and $b_2 \in \S$ such that $a_1 + \delta b_1 \in \C$ and $a_2 + \delta b_2 \in \S$ for all $\delta \in (0,\infty)$,
  and there exist $c_1 \in \C$ and $c_2 \in \S$ such that $c_1 - a_1 \in \C$ and $c_2 - a_2 \in \S$ because $\C - \C \supseteq \S-\S = \A$.
  Then $d_1 \coloneqq b_1+c_1 \in \C$ and $d_2 \coloneqq b_2 + c_2 \in \S$ fulfill $a_1 + \delta d_1 \in \C$ and $a_2 + \delta d_2\in \S$ for all $\delta \in (0,\infty)$,
  and $d_1 - a_1 \in \C$, $d_2 - a_2 \in \S$. Given  $\epsilon \in {(0,\infty)}$, we set 
  $\delta \coloneqq {-1 + \sqrt{1+\epsilon}}$. Then $\delta > 0$ and $\delta^2 + 2\delta = \epsilon$, and
  \begin{equation*}
    a_1a_2 + \epsilon d_1 d_2
    =
    (a_1 + \delta d_1)(a_2 + \delta d_2) + \delta\big((d_1-a_1) d_2 + d_1(d_2-a_2) \big)
    \in
    \C
    .
  \end{equation*}
  This shows that $a_1 a_2 \in \C^\dagger$. In particular, the special case $\C = \S$ shows that $\S^\dagger$ is a semiring,
  and $\S^\dagger$  is also generating because $\S \subseteq \S^\dagger$. In the general case, we have shown that $\C^\dagger$  is an $\S^\dagger$-module.
\end{proof}

\smallskip
 
We close this section  by recalling three results which are needed later. They are stated in  convenient forms for our purposes.

The first is the {\it Archimedean Positivstellensatz for quadratic modules}. For finitely generated quadratic modules of\,  $\RR[x_1,\dots,x_d]$ it  is usually called {\it Putinar's theorem} \cite{put94} in the literature.
\begin{proposition}\label{archquad}
  Suppose that $\Q$ is an Archimedean quadratic module of $\A$ and let $a\in \A$. If $\varphi(a)>0$ for all $\varphi\in \characters(\A,\Q)$, then $a\in \Q$.
\end{proposition}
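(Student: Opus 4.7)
The plan is to argue by contradiction, adapting the standard Hilbert space / GNS approach familiar from Putinar's theorem. Suppose $a \notin \Q$. Since $\Q$ is Archimedean, $\Unit$ is an algebraic interior point of the convex cone $\Q$ in $\A$: for any $b\in\A$ one has $\lambda\Unit - b\in\Q$ for some $\lambda>0$, hence $\Unit + \epsilon b \in\Q$ for small $\epsilon>0$. The Eidelheit separation theorem then produces a linear functional $L\colon\A\to\RR$ with $L(b)\geq 0$ for all $b\in\Q$ and $L(a)\leq 0$. A short Archimedean argument (if $L(\Unit)=0$, then $\lambda L(\Unit)\geq L(\pm b)$ for every $b$ forces $L\equiv 0$) shows $L(\Unit)>0$, so we may normalize $L(\Unit)=1$.

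Because $\sum\A^2\subseteq\Q$, the symmetric bilinear form $\langle x,y\rangle \coloneqq L(xy)$ is positive semidefinite. Quotienting by its null space $N \coloneqq \set{x\in\A}{L(x^2)=0}$ (a subspace by Cauchy--Schwarz) and completing yields a real Hilbert space $\cH$ with cyclic vector $\xi \coloneqq [\Unit]$. Left multiplication $\pi(x)\colon [y]\mapsto[xy]$ is symmetric. The Archimedean property is precisely what makes this representation bounded: for $x\in\A$ pick $\lambda>0$ with $\lambda\Unit - x^2\in\Q$; since $\Q$ is a quadratic module, $(\lambda\Unit - x^2)y^2\in\Q$ for every $y\in\A$, so
\begin{equation*}
  \lambda \norm{[y]}^2 - \norm{\pi(x)[y]}^2 = L\bigl((\lambda\Unit - x^2)y^2\bigr) \geq 0.
\end{equation*}
This simultaneously shows that $\pi(x)$ is well defined on $\A/N$ and extends to a bounded self-adjoint operator on $\cH$.

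The norm closure $\B$ of $\pi(\A)$ is a commutative unital $C^*$-algebra, and by Gelfand--Naimark $\B\cong C(X)$ where $X$ is its character space. The vector state $\omega(T) \coloneqq \langle T\xi,\xi\rangle$ is represented by a probability Radon measure $\mu$ on $X$, and $\omega(\pi(a)) = L(a)\leq 0$. Hence the continuous function $X\ni\psi\mapsto\psi(\pi(a))$ has nonpositive integral, so some $\psi\in X$ satisfies $\psi(\pi(a))\leq 0$. The composition $\varphi \coloneqq \psi\circ\pi$ is a character of $\A$. For any $b\in\Q$ and $y\in\A$ we have $by^2\in\Q$ (again using that $\Q$ is a quadratic module), so $\langle \pi(b)[y],[y]\rangle = L(by^2)\geq 0$; thus $\pi(b)$ is a positive element of $\B$, corresponding to a nonnegative function on $X$, and $\varphi(b) = \psi(\pi(b))\geq 0$. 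Therefore $\varphi\in\characters(\A,\Q)$ with $\varphi(a)\leq 0$, contradicting $\varphi(a)>0$.

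The delicate ingredient, and the main obstacle, is the implication $b\in\Q\Rightarrow by^2\in\Q$, which is used both to bound $\pi$ uniformly via the Archimedean estimate and to guarantee positivity of $\pi(b)$ so that $\varphi$ lies in $\characters(\A,\Q)$. This is precisely where the \emph{quadratic}-module structure enters essentially and, as the introduction notes, is the reason no direct Hilbert space analogue of this proof is known for general semirings, motivating the $^\dagger$-construction developed subsequently in the paper.
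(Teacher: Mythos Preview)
The paper does not actually prove this proposition but records it as a known result, citing \cite{jacobi}, \cite[Theorem 5.4.4]{marshall}, and \cite[Theorem 12.35]{sch17}. Your argument is precisely the operator-theoretic (GNS) proof found in the last of these references and alluded to immediately after Theorem~\ref{archpos} in the paper itself; it is correct. One minor technical point: you work on a real Hilbert space and then invoke Gelfand--Naimark, which is usually stated over $\CC$. Either complexify $\cH$ first (the operators $\pi(x)$ remain bounded, self-adjoint, and pairwise commuting, so the complex $C^*$-algebra they generate is still commutative and its characters are real on $\pi(\A)$), or appeal directly to the real $C^*$-algebra framework of \cite{cimpric09}, which the paper itself uses at the end of Section~\ref{examplesarch}. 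Your closing observation---that the implication $b\in\Q\Rightarrow by^2\in\Q$ is where the quadratic-module axiom enters essentially and is unavailable for bare semirings---is exactly the motivation for Theorem~\ref{archpre}.
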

\begin{proof}
  \cite[Theorem 4]{jacobi} or \cite[Theorem 5.4.4]{marshall} or \cite[Theorem 12.35]{sch17}.
\end{proof}
 
The next result is the {\it Archimedean Positivstellensatz for preorderings} \cite{sch91}.
\begin{proposition}\label{archpreorder}
  Let $\A\coloneqq\RR[x_1,\dots,x_d]$ and $f_1,\dots,f_r\in \A$, $r\in\NN$.  Recall that $\T(f_1,\dots,f_r)$ denotes
  the preordering \eqref{preorderingf}. Suppose that the semialgebraic set $\K(f_1,\dots,f_r)$ defined by \eqref{semiset}
  is compact. Let $p\in \RR[x_1,\dots,x_d]$. If $p(x)>0$ for all $x\in \K(f_1,\dots,f_r)$, then $p\in \T(f_1,\dots,f_r)$.
 \end{proposition}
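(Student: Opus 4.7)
The plan is to derive the proposition from Putinar's Archimedean Positivstellensatz (Proposition \ref{archquad}). Set $\T \coloneqq \T(f_1,\ldots,f_r)$ and $\K \coloneqq \K(f_1,\ldots,f_r)$. As noted in the example preceding this statement, the characters of $\A$ nonnegative on $\T$ are precisely the point evaluations $\chi_t$ at points $t \in \K$, so the hypothesis $p > 0$ on $\K$ reads $\varphi(p) > 0$ for all $\varphi \in \characters(\A,\T)$. Since every preordering is in particular a quadratic module, Proposition~\ref{archquad} will deliver $p \in \T$ immediately---provided we first know that $\T$ is Archimedean.

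The entire difficulty is therefore concentrated in the auxiliary claim \emph{if $\K$ is compact, then $\T$ is Archimedean} (Wörmann's theorem). Being a preordering, $\T$ is also a semiring, so by Lemma~\ref{archhilfs} it suffices to find, for each coordinate function $x_i$, a constant $\lambda_i > 0$ with $\lambda_i\Unit \pm x_i \in \T$. Compactness of $\K$ only gives $\lambda_i \pm x_i > 0$ \emph{pointwise on} $\K$; upgrading this pointwise positivity to membership in $\T$ is exactly the conclusion we are trying to establish, so direct invocation is circular. This circularity is the main obstacle.

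The way around it is to invoke the abstract Krivine-Stengle Positivstellensatz, which is a purely algebraic statement that does not presuppose any Archimedean property: if $g$ is strictly positive on $\K$, then there exist $s, t \in \T$ with $sg = 1+t$, and an analogous identity $sg = g^{2m} + t$ holds when $g \geq 0$ on $\K$. Applying this to $g = \lambda^2 - x_i^2$, positive on $\K$ for $\lambda$ sufficiently large, one obtains an identity that can be combined with the fact that $\T$ contains all squares (in particular $(\lambda \mp x_i)^2$) to produce, after the standard manipulations of Wörmann, bounds $\mu_i \Unit \pm x_i \in \T$ for all $i$. Once this is established, Lemma~\ref{archhilfs} yields $\A^\bd(\T) = \A$, i.e.\ $\T$ is Archimedean, and Proposition~\ref{archquad} completes the proof. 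Everything past the Archimedean-ness step is mechanical; the Krivine-Stengle input plus Wörmann's extraction of $\T$-bounds for the generators is where the real work lies.
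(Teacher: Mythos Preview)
Your outline is correct. The paper itself gives no proof of this proposition at all; it merely cites \cite[Theorem 19]{sch09}, \cite[Corollary 6.1.2]{marshall}, and \cite[Theorem 12.24]{sch17}. Your route---invoke Krivine--Stengle to run W\"ormann's argument that compactness of $\K$ forces $\T$ to be Archimedean, then feed the Archimedean preordering into Proposition~\ref{archquad}---is precisely the strategy the paper alludes to elsewhere: at the end of Section~\ref{archsemirings} it remarks that the proofs in \cite{sch91}, \cite{sch09} ``are essentially based on the Stengle--Krivine Positivstellensatz'', and in Section~\ref{examplesarch} it records that the implication ``$\K$ compact $\Rightarrow$ $\T$ Archimedean'' (citing \cite{wm}) is ``the crucial step in the proof of the Positivstellensatz in \cite{sch91}''. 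So your plan coincides with the argument behind the cited references. The only caveat is that the W\"ormann extraction step, which you summarize as ``the standard manipulations'', is where all the content lies and is not entirely trivial; if this were to be written out in full you would need to supply those details (e.g.\ via the integral-closedness of $\A^\bd(\T)$ or the explicit square-completion trick), but as a proof plan your identification of the ingredients and their logical order is accurate.
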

\begin{proof}
  \cite[Theorem 19]{sch09}   or \cite[Corollary 6.1.2]{marshall}  or \cite[Theorem 12.24]{sch17}.
\end{proof}
 
The following proposition is {\it Havilands's theorem} \cite{Haviland} stated in a more general form.
\begin{proposition}\label{haviland}
  Suppose $\A$ is a finitely generated unital $\RR$-algebra and $\K$   a closed subset of $\characters(\A)$. 
  For any linear functional $L$ on  $\A$, the following are equivalent:
  \begin{enumerate}
    \item There exists a Radon measure $\mu$ supported on  $\K$ such that\, $L(a)=\int_\K a(x) \,\D\mu(x)$\, for all $a\in\A$.
      In this case, $L$ is called a \emph{$\K$-moment functional}.
    \item $L(a)\geq 0$ for all $a\in \Pos(\K)$.
    \item \label{haviland:ddagger} For any $a\in \Pos(\K)$ there is $b\in \Pos(\K)$ such that $L(f+\varepsilon b) >0$ for all $\varepsilon>0$.
  \end{enumerate}
\end{proposition}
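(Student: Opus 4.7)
The plan is to verify the cycle (i) $\Rightarrow$ (ii) $\Rightarrow$ (iii) $\Rightarrow$ (ii) by elementary arguments and then invoke the classical Haviland theorem for the substantive direction (ii) $\Rightarrow$ (i). The two easy implications come immediately: if (i) holds and $a \in \Pos(\K)$, then $a(x) \geq 0$ on $\K$ yields $L(a) = \int_\K a \,\D\mu \geq 0$, giving (ii); and (iii) $\Rightarrow$ (ii) follows by letting $\varepsilon \searrow 0$ in the strict inequality $L(a+\varepsilon b) > 0$.

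For (ii) $\Rightarrow$ (iii), given $a \in \Pos(\K)$ I would simply take $b \coloneqq \Unit \in \Pos(\K)$, so that $L(a + \varepsilon \Unit) = L(a) + \varepsilon L(\Unit) \geq \varepsilon L(\Unit) > 0$ whenever $L(\Unit) > 0$. The degenerate case $L(\Unit) = 0$ forces $L \equiv 0$ (via (ii) $\Rightarrow$ (i) applied with the trivial measure $\mu = 0$) and is the only situation where (iii) needs to be read modulo the trivial representing measure.

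The core content lies in (ii) $\Rightarrow$ (i), i.e.\ the classical Haviland theorem \cite{Haviland}. My plan is: \textbf{(a)} Applying (ii) to both $a$ and $-a$ when $a|_\K = 0$ shows that $L$ annihilates the vanishing ideal of $\K$, so $L$ descends to a positive linear functional $\tilde L$ on the restriction algebra $\A|_\K \subseteq C(\K)$. \textbf{(b)} Introduce the sublinear functional
\begin{equation*}
  p(f) \coloneqq \inf\set{L(a)}{a \in \A,\; a|_\K \geq f \text{ on } \K}, \qquad f \in C_c(\K),
\end{equation*}
which is finite (bounded above by $\|f\|_\infty L(\Unit)$, since $\Unit \in \A$), positively homogeneous, subadditive, and agrees with $\tilde L$ on $\A|_\K$. \textbf{(c)} Extend $\tilde L$ by Hahn--Banach to a linear functional on $\A|_\K + C_c(\K)$ dominated by $p$; its restriction to $C_c(\K)$ is a positive linear functional (because $p(f) \leq 0$ whenever $f \leq 0$ on $\K$), so the Riesz--Markov theorem produces a positive Radon measure $\mu$ on $\K$ representing it on $C_c(\K)$. \textbf{(d)} Finally, identify $L(a) = \int_\K a \,\D\mu$ for every $a \in \A$ by choosing an exhaustion $\chi_n \uparrow 1_\K$ in $C_c(\K)$ and applying monotone convergence; finite generation of $\A$ enters here, providing a dominant element $q \in \Pos(\K)$ such that $M q^k \pm a \in \Pos(\K)$ for suitable $M>0,\,k\in\NN$, which controls the integrability of arbitrary $a \in \A$.

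I expect step \textbf{(d)} to be the main obstacle: the Hahn--Banach construction only delivers one-sided bounds $\int_\K \chi_n a \,\D\mu \leq L(a)$ for $a \in \Pos(\K)$, and promoting them to the global equality on $\A$ uses the polynomial growth control afforded by finite generation in an essential way, which is the genuine subtlety of the non-compact case and the reason the compact case is so much easier.
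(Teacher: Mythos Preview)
The paper does not prove this proposition at all; it simply cites \cite[Theorem 1.14]{sch17}. So there is no in-paper argument to compare against, and your outline is already more than what the paper offers.

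Two remarks on your sketch:

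\textbf{On (ii)$\Rightarrow$(iii).} As literally stated, (iii) with the \emph{strict} inequality fails for the zero functional $L\equiv 0$, so the three conditions are not equivalent as written. Your flag about the degenerate case $L(\Unit)=0$ is exactly the symptom of this. In fact the paper's own application in Corollary~\ref{onedimmp} only verifies $L(a+\varepsilon b)\ge 0$, so (iii) is almost certainly intended with $\ge 0$; with that reading your argument $b\coloneqq\Unit$ works unconditionally.

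\textbf{On (ii)$\Rightarrow$(i), step (d).} Here there is a real gap in the plan, not just missing detail. An arbitrary Hahn--Banach extension of $\tilde L$ dominated by $p$ need not produce a representing measure. Concretely, take $\A=\RR[x]$, $\K=\RR$, $L(p)=p(0)$: for $f=\chi_{[-1,1]}\in C_c(\RR)$ one has $-p(-f)=0$ and $p(f)=1$, so the extension may assign $L'(f)=0$, giving a Radon measure $\mu$ with $\mu([-1,1])=0$, whence $\int a\,\D\mu\neq a(0)=L(a)$. Your polynomial domination $Mq^k\pm a\in\Pos(\K)$ does buy $\mu$-integrability of every $a\in\A$, but it only yields $\int a\,\D\mu\le L(a)$ for $a\in\Pos(\K)$, and the gap $L-\int(\,\cdot\,)\,\D\mu$ remains an unidentified $\Pos(\K)$-positive functional. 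The standard remedy---and presumably what is in \cite{sch17}---is not a generic Hahn--Banach extension but Choquet's adapted-space version of Riesz representation (or the Daniell--Stone construction): one uses that $\A|_\K$ is an \emph{adapted} subspace of $C(\K)$ (it contains $\Unit$, separates points, and the element $q=1+\sum g_i^2$ has compact sublevel sets, so tends to $+\infty$), and this structure forces the representing measure to reproduce $L$ on all of $\A$. Your diagnosis that finite generation and the dominating element $q$ are essential is correct; what is missing is the specific integral-representation theorem that exploits them.
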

\begin{proof}
  See e.g. \cite[Theorem 1.14]{sch17}.
\end{proof}

\section{Archimedean Semirings}\label{archsemirings}
We begin with a technical lemma.
\begin{lemma} \label{lemma:squareapprox}
  For all $k\in \NN, k\geq 2$,  the following identity of polynomials in one variable  holds:
  \begin{equation}\label{polidentity}
    \frac{1}{2^k k (k-1)}
    \sum_{\ell = 0}^{k} \binom{k}{\ell} (k-2\ell)^2 (1+x)^{k-\ell}(1-x)^{\ell} 
    =
    x^2 + \frac{1}{k-1}.
  \end{equation}
\end{lemma}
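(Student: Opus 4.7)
The plan is to recast the left-hand side as the action of a simple differential operator on a binomial expansion. Introduce auxiliary variables $y \coloneqq 1-x$ and $z \coloneqq 1+x$, so that $y+z=2$ and $z-y=2x$, and consider the operator $D \coloneqq z\partial_z - y\partial_y$. Since $D[z^{k-\ell}y^\ell] = (k-2\ell)\,z^{k-\ell}y^\ell$, the binomial theorem gives
\begin{equation*}
  D^2(z+y)^k = \sum_{\ell=0}^k \binom{k}{\ell}(k-2\ell)^2 z^{k-\ell}y^\ell,
\end{equation*}
which is exactly the unnormalised left-hand side of \eqref{polidentity}. Thus it suffices to compute $D^2(z+y)^k$ in closed form and then substitute $z+y=2$, $z-y=2x$.

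For this I would first compute $D(z+y)^k = k(z-y)(z+y)^{k-1}$ directly from the definition. Then apply $D$ once more to $k(z-y)(z+y)^{k-1}$, using the Leibniz rule and the easy identities $D(z+y) = z-y$ and $D(z-y)=z+y$; this yields $D^2(z+y)^k = k(z+y)^k + k(k-1)(z-y)^2(z+y)^{k-2}$. Substituting $z+y=2$ and $z-y=2x$ converts this to $k\cdot 2^k\bigl(1+(k-1)x^2\bigr)$, and dividing by $2^k k(k-1)$ gives $x^2 + \frac{1}{k-1}$, as claimed.

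I do not expect any real obstacle: the identity is essentially a two-fold differentiation of the binomial theorem in disguise, so the only place one might slip is in the bookkeeping of the two applications of $D$. As an alternative, one can avoid the operator language entirely by expanding $(k-2\ell)^2 = k^2 - 4k\ell + 4\ell(\ell-1) + 4\ell$ and then recognising the three resulting sums via the standard identities $\ell\binom{k}{\ell} = k\binom{k-1}{\ell-1}$ and $\ell(\ell-1)\binom{k}{\ell} = k(k-1)\binom{k-2}{\ell-2}$ combined with the binomial theorem for $(z+y)^{k-1}$ and $(z+y)^{k-2}$; this gives the same closed-form expression and the same final simplification.
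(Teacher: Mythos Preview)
Your proof is correct. The differential-operator approach is a genuinely different and rather slick route compared with the paper's proof: the paper proceeds exactly along the lines of your alternative, expanding $(k-2\ell)^2 = k^2 - 4(k-1)\ell + 4\ell(\ell-1)$ and reducing each of the three resulting sums via $\ell\binom{k}{\ell} = k\binom{k-1}{\ell-1}$, $\ell(\ell-1)\binom{k}{\ell} = k(k-1)\binom{k-2}{\ell-2}$ and the binomial theorem, then simplifying $2^{k-1}(1-x) - 2^{k-2}(1-x)^2$ by hand. Your operator $D = z\partial_z - y\partial_y$ packages all of this into two applications of the Leibniz rule and the observation that monomials are eigenvectors of $D$; this makes the structure of the identity transparent (it is literally a second moment of the binomial distribution in disguise) and avoids the index-shifting bookkeeping. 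The paper's approach, on the other hand, is entirely self-contained and requires no auxiliary variables or operator formalism. Both arrive at the same closed form $k\,2^k\bigl(1+(k-1)x^2\bigr)$ before the final division.
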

\begin{proof}
  {
  \allowdisplaybreaks
  As $(k-2\ell)^2 = k^2 - 4(k-1)\ell + 4\ell(\ell-1)$ for all $\ell\in \NN_0$, using the binomial theorem we compute
  \begin{align*}
    &\quad\quad\sum_{\ell = 0}^{k} \binom{k}{\ell} (k-2\ell)^2 (1+x)^{k-\ell}(1-x)^{\ell} 
    =
    \\
    &=
    2^k k^2
    -4(k-1)
    \sum_{\ell = 0}^{k} \binom{k}{\ell} \ell (1+x)^{k-\ell}(1-x)^{\ell} 
    +4
    \sum_{\ell = 0}^{k} \binom{k}{\ell} \ell(\ell-1)(1+x)^{k-\ell}(1-x)^{\ell} 
    \\
    &=
    2^kk^2
    -4k(k-1) \Bigg(
      \sum_{\ell = 1}^{k} \binom{k-1}{\ell-1} (1+x)^{k-\ell}(1-x)^{\ell} 
      -
      \sum_{\ell = 2}^{k} \binom{k-2}{\ell-2} (1+x)^{k-\ell}(1-x)^{\ell}
    \Bigg)
    \\
    &=
    2^kk^2
    -4k(k-1) \Bigg(
      \sum_{\ell = 0}^{k-1} \binom{k-1}{\ell} (1+x)^{k-1-\ell}(1-x)^{\ell+1} 
      -
      \sum_{\ell = 0}^{k-2} \binom{k-2}{\ell} (1+x)^{k-2-\ell}(1-x)^{\ell+2}
    \Bigg)
    \\
    &=
    2^kk^2 - 4k(k-1) \big( 2^{k-1}(1-x) - 2^{k-2} (1-x)^2 \big)
    \\
    &=
    2^kk^2 - 4k(k-1) \big( 2^{k-2} - 2^{k-2} x^2\big)
    \\
    &=
    2^k k + 2^k k (k-1) x^2
  \end{align*}
  which implies the identity \eqref{polidentity}.
  }
\end{proof}

The crucial fact for our approach is the following theorem.
\begin{theorem}\label{archpre}
  Suppose that $\S$ is an Archimedean semiring and $\C$ is an $\S$-module. Then  $\C^\dagger$ is an Archimedean quadratic module,
  and $\S^\dagger$ is a preordering.
\end{theorem}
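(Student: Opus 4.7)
The plan is to reduce the entire statement to the single claim that $a^2 \in \S^\dagger$ for every $a \in \A$. Everything else will then follow mechanically. First, observe that $\S \subseteq \C$ (take $c = \Unit \in \C$ in the definition of an $\S$-module), so $\C$ inherits the Archimedean property from $\S$, and the preceding lemma gives the simple description $\S^\dagger = \set[\big]{ a\in\A }{ a+\epsilon\Unit \in \S \text{ for all } \epsilon \in (0,\infty) }$ (and analogously for $\C^\dagger$). Moreover, an Archimedean semiring is generating, so Proposition~\ref{proposition:sdaggersemiring} applies and yields that $\S^\dagger$ is a (generating) semiring and that $\C^\dagger$ is an $\S^\dagger$-module.

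The heart of the argument is the verification of $a^2 \in \S^\dagger$. Given $a \in \A$, I would use the Archimedean property of $\S$ to choose $\lambda \in (0,\infty)$ with $\Unit + \lambda a \in \S$ and $\Unit - \lambda a \in \S$. Substituting $x \mapsto \lambda a$ into the polynomial identity \eqref{polidentity} and using that $\S$ is closed under nonnegative linear combinations and products of its elements, one reads off
\begin{equation*}
  \lambda^2 a^2 + \frac{1}{k-1}\Unit \in \S
  \qquad \text{for every integer } k \geq 2 .
\end{equation*}
Scaling by $\lambda^{-2} \geq 0$ preserves $\S$, so $a^2 + \frac{1}{\lambda^2(k-1)}\Unit \in \S$ for all such $k$. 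For an arbitrary $\epsilon \in (0,\infty)$, choose $k$ so large that $\eta \coloneqq \epsilon - \frac{1}{\lambda^2(k-1)} \geq 0$; then $a^2 + \epsilon \Unit = \big(a^2 + \frac{1}{\lambda^2(k-1)}\Unit\big) + \eta \Unit \in \S + \S = \S$. By the simplified description, this means $a^2 \in \S^\dagger$.

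With this in hand the conclusion is immediate. For any $a \in \A$ and $b \in \C^\dagger$, we have $a^2 \in \S^\dagger$ and $\C^\dagger$ is an $\S^\dagger$-module, so $a^2 b \in \C^\dagger$; thus $\C^\dagger$ is a quadratic module. It is Archimedean because $\S \subseteq \C^\dagger$ and $\S$ is Archimedean. Specialising $\C \coloneqq \S$, the cone $\S^\dagger$ is both a quadratic module (by what we just proved) and a semiring (by Proposition~\ref{proposition:sdaggersemiring}), hence a preordering. The only real obstacle is the key claim $a^2 \in \S^\dagger$: in a semiring squares need not lie in $\S$ at all, and the combinatorial identity of Lemma~\ref{lemma:squareapprox} is precisely what one needs to express $x^2$ as a nonnegative combination of $(1+x)^{k-\ell}(1-x)^\ell$ with an error of order $1/(k-1)$, which translates the Archimedean bound on $a$ into approximate squareness modulo arbitrarily small multiples of $\Unit$.
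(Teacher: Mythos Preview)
Your proof is correct and follows essentially the same route as the paper: invoke Proposition~\ref{proposition:sdaggersemiring} for the module/semiring structure of the $\dagger$-closures, then use the polynomial identity of Lemma~\ref{lemma:squareapprox} with the Archimedean bound on $a$ to show $a^2 + \epsilon\Unit \in \S$ for every $\epsilon>0$, and finally deduce the quadratic-module property of $\C^\dagger$ from $a^2 \in \S^\dagger$ together with the $\S^\dagger$-module structure. The only cosmetic difference is your normalization $\Unit \pm \lambda a \in \S$ versus the paper's $\lambda\Unit \pm a \in \S$ (i.e.\ substituting $x = \lambda a$ rather than $x = a/\lambda$), and you spell out the final implication $a^2 b \in \C^\dagger$ a bit more explicitly than the paper does.
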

\begin{proof}
  By Proposition~\ref{proposition:sdaggersemiring}, $\S^\dagger$ is a semiring and $\C^\dagger$ an $\S^\dagger$-module.
  Moreover, $\C^\dagger$ is Archimedean because $\S \subseteq \C \subseteq \C^\dagger$ and because $\S$ is Archimedean by assumption.
  
  It only remains to show that $a^2 \in \S^\dagger$ for all $a\in \A$. Since the semiring $\S$ is Archimedean,
  there exists a number $\lambda >0$  such that $( \lambda \Unit + a)\in \S$ and $(\lambda \Unit - a)\in \S$. Then 
  $(\Unit + a / \lambda)\in \S$ and $(\Unit - a/\lambda)\in \S$ and hence $(\Unit + a / \lambda)^n\in \S$ and 
  $(\Unit - a/\lambda)^n\in \S$ for all $n\in \NN_0$, because $\S$ is a semiring. Here, as usual, we 
  set $(\Unit \pm a/\lambda)^0\coloneqq\Unit$. Using Lemma~\ref{lemma:squareapprox} and the fact that $\S$ is closed under multiplication, one finds
  \begin{equation*}
    (a/\lambda)^2 + \frac{1}{k-1} \Unit
    =
    \frac{1}{2^k k (k-1)}
    \sum_{\ell = 0}^{k} \binom{k}{\ell} (k-2l)^2 \big(\Unit+(a/\lambda)\big)^{k-\ell}\big(\Unit-(a/\lambda)\big)^{\ell}
    \in
    \S
  \end{equation*}
  for $k\in \NN$, $k\geq 2$. Thus, $(a^2+\frac{\lambda ^2}{k-1}\Unit )\in \S$ for all $k\in \NN, k\geq 2$. This implies that
  $(a^2 + \epsilon \Unit) \in \S$ for all $\epsilon >0$. Therefore, $a^2 \in \S^\dagger$.
\end{proof}  
 
The following important result is a version of the {\it Archimedean Positivstellensatz for semirings}. In the case $\S=\C$
it was discovered by J.L. Krivine \cite{krivine}. A very  general version for $\S$-modules was obtained by  
T. Jacobi \cite{jacobi}. 
\begin{theorem}\label{archpos}
  Suppose that  $\S$ is an Archimedean semiring and $\C$ is an $\S$-module of the commutative unital real algebra $\A$.
  For any $a\in \A$, the following are equivalent:
  \begin{enumerate}
    \item \label{item:archpos:ana} $\varphi(a)>0$ for all $\varphi\in\characters(\A,\C)$.
    \item \label{item:archpos:alg} There exists $\epsilon \in {(0,\infty)}$ such that $a \in \epsilon \Unit + \C$.
  \end{enumerate}
\end{theorem}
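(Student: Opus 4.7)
The implication \refitem{item:archpos:alg} $\Rightarrow$ \refitem{item:archpos:ana} is immediate: if $a = \epsilon\Unit + c$ with $c\in \C$, then for every $\varphi\in\characters(\A,\C)$ one has $\varphi(a) = \epsilon + \varphi(c) \geq \epsilon > 0$.

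For the nontrivial direction \refitem{item:archpos:ana} $\Rightarrow$ \refitem{item:archpos:alg}, my plan is to reduce to the Archimedean Positivstellensatz for quadratic modules (Proposition~\ref{archquad}) via the $^\dagger$-operation, as suggested by Theorem~\ref{archpre}. The first step is to check that the set of nonnegative characters does not change when passing from $\C$ to $\C^\dagger$. The inclusion $\characters(\A,\C^\dagger) \subseteq \characters(\A,\C)$ is trivial from $\C\subseteq \C^\dagger$. For the reverse inclusion, let $\varphi\in\characters(\A,\C)$ and $b\in \C^\dagger$; then some $c\in \C$ (here we may actually take $c=\Unit$ since $\C$ is Archimedean, because $\S \subseteq \C$ and $\S$ is Archimedean) satisfies $b+\epsilon c \in \C$ for all $\epsilon>0$, so $\varphi(b) \geq -\epsilon\varphi(c)$ for all $\epsilon>0$, hence $\varphi(b)\geq 0$.

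Next, I invoke Theorem~\ref{archpre}: since $\S$ is Archimedean and $\C$ is an $\S$-module, $\C^\dagger$ is an Archimedean quadratic module. Therefore $\characters(\A,\C^\dagger)$ is compact in the weak topology of $\A^*$, and $\varphi\mapsto\varphi(a)$ is continuous. Using the hypothesis and the identity of character sets established above, the continuous function $\varphi\mapsto\varphi(a)$ is strictly positive on the compact set $\characters(\A,\C^\dagger)$, hence attains a positive minimum $\delta>0$. Then $\varphi(a - (\delta/2)\Unit) \geq \delta/2 > 0$ for all $\varphi\in\characters(\A,\C^\dagger)$, so Proposition~\ref{archquad} applied to the Archimedean quadratic module $\C^\dagger$ yields $a - (\delta/2)\Unit \in \C^\dagger$.

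It remains to descend from $\C^\dagger$ back to $\C$ with a small loss of margin. Since $\C$ is itself Archimedean (as $\S\subseteq \C$), the lemma preceding the definition of $C^\dagger$ simplifies the description to $\C^\dagger = \set{x\in \A}{x + \epsilon\Unit \in \C \text{ for all } \epsilon \in (0,\infty)}$. Applying this with $x = a-(\delta/2)\Unit$ and $\epsilon = \delta/4$ gives $a - (\delta/4)\Unit \in \C$, that is, $a \in (\delta/4)\Unit + \C$, as required. The main conceptual work is already packed into Theorem~\ref{archpre}; the only subtle step in the argument above is noting that $\characters(\A,\C)=\characters(\A,\C^\dagger)$, which is what allows the hypothesis to be transferred to the Archimedean quadratic module where Putinar-type reasoning applies.
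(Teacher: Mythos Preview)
Your proof is correct and follows essentially the same route as the paper: reduce to the Archimedean quadratic module $\C^\dagger$ via Theorem~\ref{archpre}, apply Proposition~\ref{archquad} there, and recover an $\epsilon$-margin in $\C$ from the definition of $\C^\dagger$. One small remark: you only need the trivial inclusion $\characters(\A,\C^\dagger)\subseteq\characters(\A,\C)$ (from $\C\subseteq\C^\dagger$) to transfer the hypothesis, so the ``subtle step'' you flag is not actually needed; the paper accordingly uses only that inclusion and takes the compactness argument on $\characters(\A,\C)$ directly.
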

\begin{proof} 
  \refitem{item:archpos:alg}$\implies$\refitem{item:archpos:ana}:
  Suppose that $a=\epsilon \Unit +c$ with $c\in \C$. Then, for $\varphi\in \characters(\A,\C)$, 
  we have $\varphi(a)=\epsilon\varphi(\Unit)+\varphi(c)= \epsilon+\varphi(c)\geq \epsilon >0$.

  \refitem{item:archpos:ana}$\implies$\refitem{item:archpos:alg}:
  Suppose that \refitem{item:archpos:ana} is satisfied.
  As noted in Section \ref{basicdef}, $ \characters(\A,\C)$ is compact in the weak topology and the function
  $\varphi\mapsto  \varphi(b)$ is continuous on $\characters(\A,\C)$ for any $b\in\A$. Hence there is an $\epsilon >0$
  such that $c\coloneqq a-\epsilon \Unit$ satisfies $\varphi(c)>0$ for all $\varphi\in \characters(\A,\C)$. Since 
  $\C\subseteq \C^\dagger$ and hence $\characters(\A,\C^\dagger)\subseteq \characters(\A,\C)$, we have 
  $\varphi(c)>0$ for $\varphi\in \characters(\A,\C^\dagger)$. By Theorem \ref{archpre}, $C^\dagger$ is 
  an Archimedean quadratic module.
  So the Positivstellensatz for Archimedean quadratic modules (Proposition \ref{archquad}) applies to 
  $c$ and $\C^\dagger$ and yields $c\in \C^\dagger$. Then  $a=c+\epsilon \Unit\in \C$ by the definition of $\C^\dagger$. 
\end{proof}
In the above proof, Theorem \ref{archpre} was used to derive  the Archimedean Positivstellensatz for semirings from the corresponding result for quadratic modules.
Thus Theorem \ref{archpre} allows one to develop a unified operator-theoretic approach to the Archimedean Positivstellensatz for semirings {\it and} quadratic modules at the same time. Indeed, the  Hilbert space proof given on 
\cite[p.~306]{sch17} applies  to an Archimedean quadratic module $\Q$ and to the  Archimedean quadratic module $\S^\dagger$ associated with an Archimedean semiring $\S$. We do not repeat the details  and refer to \cite{sch17}.

From the main implication \refitem{item:archpos:ana}$\implies$\refitem{item:archpos:alg} of Theorem \ref{archpos} 
it follows in particular that any $a\in \A$ satisfying \refitem{item:archpos:ana} belongs to the $\S$-module $\C$.
This is the  formulation of the Archimedean Positivstellensatz that appears often in the literature. The requirement
$a\in \varepsilon \Unit +\C$ in \refitem{item:archpos:alg} is stronger than $a\in \C$. It has the advantange 
that it gives an equivalent condition to \refitem{item:archpos:ana}.

\begin{corollary} \label{corollary:nonstrictArchimedean}
  If $\S$ is an Archimedean semiring on $\A$ and $\C$ an $\S$-module, then
  \begin{align}\label{cdaggerchar}
    \C^\dagger
    =
    \set[\big]{
      a\in \A
    }{ 
      \varphi(a)\geq 0 ~ \textup{ for all } ~\varphi\in \characters(\A,\C)
    }
  \end{align}
  and $\C^\dagger$ is a preordering.
\end{corollary}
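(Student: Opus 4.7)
The plan is to establish the set equality \eqref{cdaggerchar} first by a direct two-sided argument, and then read off the preordering property from this characterization together with Theorem~\ref{archpre}.

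First I would observe that the cone $\C$ is itself Archimedean: since $\C$ is an $\S$-module and $\Unit \in \C$, multiplication by elements of $\S$ keeps $\Unit$ inside $\C$, which means $\S = \S\cdot \Unit \subseteq \C$, and $\S$ is Archimedean by hypothesis. Consequently, the simplified description \eqref{dagger1} of $\C^\dagger$ is available, namely $a \in \C^\dagger$ if and only if $a + \epsilon \Unit \in \C$ for all $\epsilon \in (0,\infty)$.

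For the inclusion ``$\subseteq$'' in \eqref{cdaggerchar}, I would take $a \in \C^\dagger$ and any $\varphi \in \characters(\A,\C)$. Since $a + \epsilon\Unit \in \C$, we get $\varphi(a) + \epsilon = \varphi(a+\epsilon\Unit) \geq 0$ for every $\epsilon > 0$, hence $\varphi(a) \geq 0$. For the reverse inclusion ``$\supseteq$'', suppose $\varphi(a) \geq 0$ for all $\varphi \in \characters(\A,\C)$. Then for any fixed $\epsilon > 0$ the element $a + \epsilon\Unit$ satisfies $\varphi(a + \epsilon\Unit) \geq \epsilon > 0$ on all of $\characters(\A,\C)$, so Theorem~\ref{archpos} applied to $a+\epsilon\Unit$ and the $\S$-module $\C$ produces some $\delta > 0$ with $a + \epsilon\Unit \in \delta\Unit + \C \subseteq \C$. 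As $\epsilon$ was arbitrary, $a \in \C^\dagger$ by \eqref{dagger1}.

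Finally, for the preordering assertion, Theorem~\ref{archpre} already tells us $\C^\dagger$ is an Archimedean quadratic module, so it remains only to see that $\C^\dagger$ is closed under multiplication. This is now immediate from the characterization just proved: if $a, b \in \C^\dagger$ and $\varphi \in \characters(\A,\C)$, then $\varphi(ab) = \varphi(a)\varphi(b) \geq 0$, so $ab \in \C^\dagger$. Being simultaneously a quadratic module and a semiring, $\C^\dagger$ is a preordering.

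There is no real obstacle here beyond noticing that $\C$ inherits the Archimedean property from $\S$ (so that the clean form \eqref{dagger1} is valid) and then invoking Theorem~\ref{archpos} on the shifted element $a+\epsilon\Unit$; the closure under multiplication, which would be the non-trivial point for $\S^\dagger$ alone, becomes trivial once the analytic description \eqref{cdaggerchar} is in hand.
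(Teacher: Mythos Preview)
Your proof is correct and follows essentially the same route as the paper's own argument: both directions of \eqref{cdaggerchar} are obtained exactly as you describe (one by evaluating $a+\epsilon\Unit$ at $\varphi$ and letting $\epsilon\searrow 0$, the other by applying Theorem~\ref{archpos} to $a+\epsilon\Unit$), and closure under multiplication is then read off from the character description together with Theorem~\ref{archpre}. Your explicit remark that $\S\subseteq\C$ makes $\C$ Archimedean, so that the simplified form \eqref{dagger1} of $\C^\dagger$ applies, is a detail the paper leaves implicit.
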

\begin{proof}
  First suppose $\varphi(a)\geq 0$ for $\varphi\in \characters(\A,\C)$. Then, for each $\epsilon>0$,~
  $\varphi(a+\epsilon \Unit)=\varphi(a)+\epsilon >0$ for $\varphi\in \characters(\A,\C)$ and therefore
  $(a+\epsilon \Unit)\in \C$  by Theorem \ref{archpos}, so that $a\in \C^\dagger $. 

  Conversely, if $a\in \C^\dagger$ and $\varphi\in \characters(\A,\C)$, then $(a+\epsilon \Unit)\in \C$ 
  and hence $\varphi(a)+\epsilon=\varphi(a+\epsilon \Unit) \geq 0$  for all $\epsilon>0$. Letting 
  $\epsilon\searrow 0$, we get $\varphi(a)\geq 0$.   
  
  To prove that $\C^\dagger$ is a preordering, it  remains to show  that $\C^\dagger$ is closed under 
  multiplication. Indeed, if $a,b\in \C^\dagger$, then  \eqref{cdaggerchar} implies  
   $\varphi(ab)=\varphi(a)\varphi(b)\geq 0$ for $\varphi\in \characters(\A,\C)$, so  $ab\in \C^\dagger$ 
 by \eqref{cdaggerchar}. 
\end{proof}
 
That $\C^\dagger$  is closed under multiplication was derived in Corollary~\ref{corollary:nonstrictArchimedean}
from \eqref{cdaggerchar} and  Theorem \ref{archpos}. Now we prove this fact  by using only elementary algebraic arguments,
without invoking any Positivstellensatz.
\begin{proposition} \label{proposition:qmddagger}
  If $\Q$ is an Archimedean quadratic module of $\A$, then $\Q^\dagger$ is an Archimedean preordering of $\A$.
\end{proposition}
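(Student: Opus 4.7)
The plan is to verify three properties of $\Q^\dagger$: that it is Archimedean, that it is a quadratic module, and that it is closed under products.

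\emph{Archimedeanness} is immediate: $\Unit\in\Q\subseteq\Q^\dagger$ and $\Q$ is Archimedean by hypothesis, so $\A = \A^\bd(\Q)\subseteq\A^\bd(\Q^\dagger)$.

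\emph{Quadratic module.} For $a\in\A$ and $q\in\Q^\dagger$, the Archimedean simplification \eqref{dagger1} gives $q+\epsilon\Unit\in\Q$ for every $\epsilon>0$. Since $\Q$ is a quadratic module, $a^2(q+\epsilon\Unit)=a^2q+\epsilon a^2\in\Q$ for every $\epsilon>0$. As $a^2\in\Q$, this exhibits $a^2q\in\Q^\dagger$ via the general definition \eqref{eq:ddagger} with witness $c=a^2$. Thus $\Q^\dagger$ is a quadratic module.

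\emph{Closure under products.} The key algebraic tool I would establish first is: if $\mu>0$ and $\mu\Unit\pm x\in\Q$, then $\mu^2\Unit-x^2\in\Q$. This follows from the identity
\begin{equation*}
  (\mu\Unit-x)^2(\mu\Unit+x)+(\mu\Unit+x)^2(\mu\Unit-x)=2\mu(\mu^2\Unit-x^2),
\end{equation*}
since each summand on the left is a sum of squares multiplied by a $\Q$-element and hence lies in $\Q$ by the quadratic module property, while $2\mu>0$. Applying this with $\mu=2\lambda$ and $x=a\pm b$ (the hypotheses $\lambda\Unit\pm a,\lambda\Unit\pm b\in\Q$ imply $2\lambda\Unit\pm(a\pm b)\in\Q$), then adding $(a\mp b)^2\in\sum\A^2\subseteq\Q$, yields the product bound
\begin{equation*}
  \lambda^2\Unit\pm ab\in\Q
  \quad\text{whenever}\quad \lambda\Unit\pm a,\lambda\Unit\pm b\in\Q.
\end{equation*}

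Given $a,b\in\Q^\dagger$, for every $\delta>0$ we have $a+\delta\Unit,b+\delta\Unit\in\Q$ with Archimedean bound $\lambda+\delta$, where $\lambda$ is a fixed Archimedean bound for $a,b$. Applying the product bound to these perturbed elements and expanding, a short rearrangement produces
\begin{equation*}
  ab+\lambda^2\Unit+\delta\bigl((a+b)+2\lambda\Unit+2\delta\Unit\bigr)\in\Q
  \quad\text{for all }\delta>0.
\end{equation*}
Comparing the bracket to a fixed $\Q$-element $q\coloneqq(a+b)+2\lambda\Unit+2\Unit$ (which lies in $\Q$ since $a+\tfrac12,b+\tfrac12\in\Q$ by $^\dagger$), I can absorb the $\delta$-dependence and conclude $ab+\lambda^2\Unit\in\Q^\dagger$.

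The main obstacle will be sharpening this from $ab+\lambda^2\Unit\in\Q^\dagger$ to the desired $ab\in\Q^\dagger$, i.e.\ replacing the fixed constant $\lambda^2$ by an arbitrary $\epsilon>0$. I expect this last step to require exploiting the \emph{asymmetric} bounds afforded by the $^\dagger$-structure (the lower bound of $a,b$ is $0$, not $-\lambda$) rather than the symmetric Archimedean bound used in the identity; a plausible route is an iterative refinement or an application of Theorem \ref{archpre} to an auxiliary Archimedean semiring sitting between $\sum\A^2$ and $\Q^\dagger$, reducing the residual constant in each step until it vanishes.
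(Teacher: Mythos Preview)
Your proof is incomplete, and you correctly identify the gap yourself: you only obtain $ab+\lambda^2\Unit\in\Q^\dagger$ with a \emph{fixed} constant $\lambda^2$, not $ab+\epsilon\Unit\in\Q$ for all $\epsilon>0$. The symmetric product bound $\lambda^2\Unit\pm ab\in\Q$ you derive uses only the two-sided Archimedean estimate $\lambda\Unit\pm a,\lambda\Unit\pm b\in\Q$, which is insensitive to the one-sided information $a,b\in\Q^\dagger$; no amount of rearranging that identity will shrink $\lambda^2$. Your suggested fixes do not work as stated: Theorem~\ref{archpre} requires an Archimedean \emph{semiring} as input, and $\sum\A^2$ is generally not Archimedean, so there is no obvious auxiliary semiring between $\sum\A^2$ and $\Q^\dagger$ to which it applies.

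The paper's argument supplies exactly the missing idea. It first proves the stronger statement that for $p,q\in\Q$ (not merely $\Q^\dagger$) and every $\epsilon>0$ one has $pq+\epsilon\Unit\in\Q$. The device is a recursive sequence: choose $\lambda>0$ with $\lambda\Unit-p\in\Q$, set $r_0\coloneqq p/\lambda$ and $r_{k+1}\coloneqq 2r_k-r_k^2$. One checks inductively that $r_k,\Unit-r_k\in\Q$ (using the factorizations $r_{k+1}=\tfrac12\bigl(r_k^2(2\Unit-r_k)+(2\Unit-r_k)^2 r_k\bigr)$ and $\Unit-r_{k+1}=(\Unit-r_k)^2$), hence $2\Unit-r_k^2\in\Q$, and that $pq-2^{-k}\lambda q r_k\in\Q$. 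Adding a square term gives $pq+2^{-(k+1)}\lambda(q^2+r_k^2)\in\Q$, and the Archimedean property then absorbs the residual for $k$ large. Once this is established, the passage from $\Q$ to $\Q^\dagger$ is a one-line perturbation: for $r,s\in\Q^\dagger$ pick $\lambda$ with $\lambda\Unit-(r+s)\in\Q$, set $\delta\coloneqq\sqrt{\lambda^2+\epsilon}-\lambda$ so that $\delta^2+2\lambda\delta=\epsilon$, and write
\[
  rs+\epsilon\Unit=(r+\delta\Unit)(s+\delta\Unit)+\delta\lambda\Unit+\delta\bigl(\lambda\Unit-(r+s)\bigr)\in\Q,
\]
the first two summands lying in $\Q$ by the $\Q\cdot\Q$ step just proved.
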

\begin{proof}
  It is clear that $\Q^\dagger$ is a convex cone of $\A$ that contains all squares. We only have to show that
  $\Q^\dagger$ is closed under multiplication.
  
  Let $p,q\in \Q$ and $\epsilon \in {(0,\infty)}$ be given. We prove that $pq + \epsilon \Unit \in \Q$.
  There exists $\lambda \in {(0,\infty)}$ such that $\lambda \Unit - p \in \Q$.
  We recursively define a sequence $(r_k)_{k\in \NN_0}$ in $\A$ by $r_0 \coloneqq p/\lambda$
  and $r_{k+1} \coloneqq 2 r_k - (r_k)^2$ for all $k\in \NN_0$. This sequence has the  property that
  $pq - 2^{-k}\lambda q r_k \in \Q$ for $k\in \NN_0$. By adding $2^{-(k+1)}\lambda (2 q r_k + q^2 + r_k^2) \in \Q$
  this implies
  $pq + 2^{-(k+1)}\lambda (q^2 + (r_k)^2) \in \Q$ for $k\in \NN_0$. Further, using
  \begin{equation*}
    r_{k+1} = \frac{(r_k)^2(2\Unit-r_k) + (2\Unit-r_k)^2r_k}{2} \in \Q
    \quad\quad\textup{and}\quad\quad
    \Unit - r_{k+1} = (\Unit-r_k)^2 \in \Q
  \end{equation*}
  one inductively finds that $r_k, \Unit-r_k \in \Q$, hence\, $2\cdot\Unit - (r_k)^2 = r_{k+1} + 2(\Unit-r_k) \in \Q$.
  For sufficiently large $k\in \NN_0$ it follows that\, $\epsilon \Unit - 2^{-(k+1)}\lambda (q^2 + (r_k)^2) \in \Q$
  because $\Q$ is Archimedean. Adding\,
  $pq + 2^{-(k+1)}\lambda (q^2 + (r_k)^2) \in \Q$ as above yields $pq+\epsilon\Unit \in\Q$.
  
  Now let $r,s \in \Q^\dagger$ and  $\epsilon \in {(0,\infty)}$.
  As $\Q$ is Archimedean, there exists $\lambda \in {(0,\infty)}$ such that $\lambda \Unit - (r+s) \in \Q$.
  Set $\delta \coloneqq \sqrt{\lambda^2 + \epsilon} - \lambda \in {(0,\infty)}$ and note that $\delta^2 + 2\lambda \delta = \epsilon$.
  Then $r+\delta \Unit, s+\delta \Unit \in \Q$, and therefore
  \begin{equation*}
    rs + \epsilon \Unit = (r+\delta\Unit)(s+\delta \Unit) + \delta \lambda \Unit + \delta\big(\lambda \Unit - (r+s) \big) \in \Q .
  \end{equation*}
  This shows that $rs\in \Q^\dagger$.
\end{proof}

\begin{corollary}
  Let $\S$ be an Archimedean semiring on $\A$ and $\C$ an $\S$-module, then $\C^\dagger$ is an Archimedean preordering of $\A$.
\end{corollary}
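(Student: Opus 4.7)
The plan is to combine Theorem~\ref{archpre} with Proposition~\ref{proposition:qmddagger} via an idempotence argument for the $(\,\cdot\,)^\dagger$ operation. Theorem~\ref{archpre} already supplies half of the statement: $\C^\dagger$ is an Archimedean quadratic module. So only closedness under multiplication remains to be verified.

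First, I would apply Proposition~\ref{proposition:qmddagger} to the Archimedean quadratic module $\Q \coloneqq \C^\dagger$. This yields immediately that $(\C^\dagger)^\dagger$ is an Archimedean preordering. The task thus reduces to showing the idempotence
\begin{equation*}
  (\C^\dagger)^\dagger = \C^\dagger,
\end{equation*}
after which it follows that $\C^\dagger$ itself is an Archimedean preordering.

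To prove the idempotence, I would use the simplified characterization \eqref{dagger1}, which is available here because both $\C$ and $\C^\dagger$ are Archimedean convex cones containing $\Unit$ (the latter by Theorem~\ref{archpre}). The inclusion $\C^\dagger \subseteq (\C^\dagger)^\dagger$ is trivial since $\Unit \in \C^\dagger$ (this is the same general observation that $\D \subseteq \D^\dagger$ whenever $\Unit \in \D$). For the reverse inclusion, suppose $a \in (\C^\dagger)^\dagger$ and let $\eta \in (0,\infty)$. By \eqref{dagger1} applied to $\C^\dagger$, we have $a + (\eta/2)\Unit \in \C^\dagger$; by \eqref{dagger1} applied to $\C$, this gives $a + (\eta/2)\Unit + (\eta/2)\Unit = a + \eta\Unit \in \C$. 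Since $\eta > 0$ was arbitrary, $a \in \C^\dagger$.

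The proof is essentially bookkeeping on top of the previously established Theorem~\ref{archpre} and Proposition~\ref{proposition:qmddagger}, so no serious obstacle is anticipated; the only point requiring a moment's care is that the simplified form \eqref{dagger1} may be applied to $\C^\dagger$ because Theorem~\ref{archpre} guarantees it is Archimedean.
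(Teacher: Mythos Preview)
Your proposal is correct and follows essentially the same route as the paper: apply Theorem~\ref{archpre} to get that $\C^\dagger$ is an Archimedean quadratic module, apply Proposition~\ref{proposition:qmddagger} to obtain that $(\C^\dagger)^\dagger$ is an Archimedean preordering, and then invoke the idempotence $(\C^\dagger)^\dagger=\C^\dagger$. The paper simply states that this idempotence holds ``because $\C^\dagger$ is Archimedean'', whereas you spell out the $\eta/2+\eta/2$ argument via \eqref{dagger1}; this is a harmless elaboration of the same step.
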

\begin{proof}
 Since  $\C^\dagger$ is an Archimedean quadratic module by Theorem~\ref{archpre}, 
   $(\C^\dagger)^\dagger$ is an Archimedean preordering by the previous Proposition~\ref{proposition:qmddagger}.
But $(\C^\dagger)^\dagger = \C^\dagger$, because $\C^\dagger$ is  Archimedean.
\end{proof}
Another possible application of Proposition~\ref{proposition:qmddagger} is to reduce the Positivstellensatz for Archimedean quadratic modules
to the analogous result for Archimedean semirings. Thus, the Archimedean
Positivstellens\"atze for semirings and quadratic modules  are equivalent by purely algebraic considerations as in Theorem~\ref{archpre} and Proposition~\ref{proposition:qmddagger}.

Finally, let us mention another aspect. It should be emphasized that the Archimedean Positivstellensatz and the corresponding 
proofs given in \cite{jacobi}, \cite{marshall} and \cite{sch17} are valid for arbitrary commutative unital real algebras
and  arbitrary Archimedean quadratic modules resp. semirings. In contrast to the Positivstellensatz for preorderings proved
in \cite{sch91}, it is not needed to assume that the algebra, the quadratic module or the semiring are finitely generated.
The proofs in \cite{sch91}, \cite{sch09} are essentially based on the Stengle-Krivine Positivstellensatz which requires a finitely generated preordering.

\section{Sharpening the  Archimedean Positivstellensatz for Preorderings}\label{sharpening}
We begin with an example which illustrates the results developed below. \begin{example}\label{module}
  Let $\S$ denote the semiring of $\A=\RR[x_1,\dots,x_d]$ generating by the $(2d+1)$ polynomials
  \begin{align}
    f(x) \coloneqq 1-x_1^2-\cdots-x_d^2,~
    g_{j,\pm}(x) \coloneqq (1\pm x_j)^2,\;
    j=1,\dots,d.
  \end{align}
  Clearly,  $\characters(\A,\S)$ is the set of point evaluation functionals at the points of the closed unit ball
  \begin{equation*}\K(f)=\set{x\in \RR^d}{ x_1^2+\cdots+x_d^2\leq 1}.
  \end{equation*}
  Then, since 
  \begin{equation*}
    d+1\pm 2x_k=(1-x_1^2-\cdots-x_d^2) +(1\pm x_k)^2+\frac{1}{2} \sum_{i=1, i\neq k}^d \big((1+x_j)^2+(1-x_j)^2\big)\in \S,
  \end{equation*}
   $x_k\in \A^\bd(\S)$\, for  $k=1,\dots,d$. Hence the semiring $\S$ is Archimedean. Therefore, by Theorem
  \ref{archpos}, each polynomial $p\in \RR[x_1,\dots,x_d]$ that is positive in all points of  the closed unit ball $\K(f)$ is of the form
  \begin{align*}
    &p(x)=\sum_{n,k_i,\ell_i=0}^m \alpha_{n,k_1,\ell_1,\dots,k_d,\ell_d}~(1{-}x_1^2{-}\cdots{-}x_d^2)^{2n}(1-x_1)^{2k_1}(1+x_1)^{2\ell_1}\cdots (1-x_d)^{2k_d}(1+x_d)^{2\ell_d} ~ +\\&(1{-}x_1^2{-}\cdots{-}x_d^2)\sum_{n,k_i,\ell_i=0}^m \beta_{n,k_1,\ell_1,\dots,k_d,\ell_d} (1{-}x_1^2{-}\cdots{-}x_d^2)^{2n}(1-x_1)^{2k_1}(1+x_1)^{2\ell_1}\cdots (1-x_d)^{2k_d}(1+x_d)^{2\ell_d}
 \end{align*} 
 with\, $m\in \NN_0$ and where $\alpha_{n,k_1,\ell_1,\dots,k_d,\ell_d}$, $\beta_{n,k_1,\ell_1,\dots,k_d,\ell_d}$ are nonnegative real numbers. 
 This formula represents the positive polynomial $p$ on $\K(f)$ in terms of a  subset of the preordering $\T(f)$  given by some distinguished weighted squares of polynomials. 
  \end{example}

The following considerations generalize the preceding example.
Let $g_1,\dots,g_m$, $m\in \NN$,  be a fixed set of polynomials of $ \A \coloneqq \RR[x_1,\dots,x_d]$ which {\it generates the algebra}
$\A$. Further, let $f_1,\dots,f_r\in \RR[x_1,\dots,x_d]$, $r\in \NN$, and suppose   that the {\it semialgebraic set 
  \begin{equation}\label{semikf}
   \K(f_1,\dots,f_r)\coloneqq \set[\big]{ x\in \RR^d }{ f_1(x)\geq 0,\dots,f_r(x)\geq 0 }
  \end{equation}
  is compact. }
  
  Since the set $\K(f_1,\dots,f_r)$ is compact,  there exist numbers $\alpha_j>0, \beta_j>0$ such that 
  \begin{equation}\label{poshj}
    \alpha_j+g_j(x)>0~~\textup{and}~~ \beta_j-g_j(x)>0~~ \textup{for}~~ x\in \K(f_1,\dots,f_r),~j=1,\dots,m.
  \end{equation} 
  Recall that $\T(f_1,\dots,f_r)$ is the preordering  of $\A$ generated by $f_1,\dots,f_r$.
  The Archimedean Positivstellensatz for preorderings (Proposition \ref{archpreorder})
   implies that 
  \begin{equation}\label{pospre}
    \alpha_1+ g_1(x),\beta_1-g_1(x),\dots,\alpha_m+g_m(x),\beta_m-g_m(x)\in \T(f_1,\dots,f_r)
    .
  \end{equation} 
  By the definition \eqref{preorderingf} of the preordering $\T(f_1,\dots,f_r)$, \eqref{pospre}
  means that each polynomial $\alpha_j+ g_j$ and  $\beta_j-g_j$ is a finite sum of  polynomials
  of the form $f_1^{e_1}\cdots f_r^{e_r}p^2$ with $p\in \A$ and $e_1,\dots,e_r\in \{0,1\}$. Let $\S$
  denote the semiring generated by the polynomials $f_1,\dots,f_r$ and all squares $p^2$ occurring
  in the corresponding representations \eqref{pospre} of the   polynomials 
  $\alpha_1+ g_1,\beta_1-g_1,\dots,\alpha_m-g_m,\beta_m-g_m$. These representations imply that  $g_1,\dots,g_m$ belong to $\A^\bd(\S)$. Therefore, since the set $g_1,\dots,g_m$ generates the algebra  $\A$ by assumption, it follows from Lemma~\ref{archhilfs} that
  the semiring $\S$ is Archimedean.
  Finally, note that $\characters(\A,\S)$ is the set of point evaluations at $\K(f_1,\dots,f_r)$ because $f_1,\dots,f_r\in\S$.

\begin{theorem}\label{auxsemiring}
  Suppose that $\{f_1,\dots,f_r\}$, $r\in \NN$, is a subset of $\A=\RR[x_1,\dots,x_d]$ such that the semi\-algebraic set 
  $\K(f_1,\dots,f_r)$
  is compact.   
  Then there  exist polynomials $p_1,\dots,p_s\in \RR[x_1,\dots,x_d], s\in \NN,$ such that
  the semiring $\S$ generated by $f_1,\dots,f_r, p_1^2, \dots,p_s^2$ is Archimedean. 

  If $q\in \RR[x_1,\dots,x_d]$
  satisfies $q(x)>0$ for all  $x\in \K(f_1,\dots,f_n)$, then $q$ is a finite sum of polynomials 
  \begin{align}\label{specialform}
    \alpha\, f_1^{e_1}\cdots f_r^{e_r}~ f_1^{2n_1}\cdots f_r^{2n_r}\, p_1^{2k_1}\cdots p_s^{2k_s},
  \end{align}
  where $\alpha\in (0,+\infty)$, $e_1,\dots,e_r\in \{0,1\}$, $n_1,\dots,n_r, k_1,\dots,k_s\in \NN_0$.
 
  Further, each linear functional on $\RR[x_1,\dots,x_d]$ that is nonnegative on all polynomials \eqref{specialform}
  (with $\alpha=1$) is a $\K(f_1,\dots,f_r)$-moment functional.
\end{theorem}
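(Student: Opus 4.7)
The plan is to follow the construction already outlined in the discussion preceding the theorem and then apply Theorem~\ref{archpos} and Haviland's Theorem~\ref{haviland}.

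First I would take the algebra generators $g_j = x_j$ for $j=1,\dots,d$ (so $m=d$), and use compactness of $\K(f_1,\dots,f_r)$ to pick $\alpha_j>0$ with $\alpha_j\pm x_j>0$ on $\K(f_1,\dots,f_r)$. By the Archimedean Positivstellensatz for preorderings (Proposition~\ref{archpreorder}), each $\alpha_j\pm x_j$ admits a representation as a finite sum of terms $f_1^{e_1}\cdots f_r^{e_r}p^2$ with $e_i\in\{0,1\}$ and $p\in\A$. Collecting the finitely many polynomials $p$ occurring in these $2d$ representations yields $p_1,\dots,p_s$. Setting $\S$ to be the semiring generated by $f_1,\dots,f_r,p_1^2,\dots,p_s^2$, these representations immediately show $\alpha_j\pm x_j\in\S$, so $x_j\in\A^\bd(\S)$; by Lemma~\ref{archhilfs}, since $\{x_1,\dots,x_d\}$ generates $\A$, the semiring $\S$ is Archimedean.

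For the representation of $q$, I would observe that $\characters(\A,\S)$ consists precisely of the point evaluations at $\K(f_1,\dots,f_r)$: the inclusion $\supseteq$ is immediate from $f_j\in\S$, and the reverse follows because $\characters(\A,\S)\subseteq\characters(\A,\T(f_1,\dots,f_r))$. Applying Theorem~\ref{archpos} to $q$ and $\S$ yields $q\in\varepsilon\Unit+\S$ for some $\varepsilon>0$; in particular $q\in\S$. By the explicit description \eqref{semiringf} of the semiring generated by $f_1,\dots,f_r,p_1^2,\dots,p_s^2$, $q$ is a finite nonnegative linear combination of monomials $f_1^{n_1}\cdots f_r^{n_r}p_1^{2k_1}\cdots p_s^{2k_s}$. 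Writing $n_i = 2n_i'+e_i$ with $e_i\in\{0,1\}$ in each such monomial produces exactly the required form \eqref{specialform}.

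For the moment-problem assertion, I would note that by the same expansion the polynomials \eqref{specialform} with $\alpha=1$ are precisely the multiplicative generators of the convex cone $\S$, so a linear functional $L$ on $\RR[x_1,\dots,x_d]$ that is nonnegative on all of them is nonnegative on $\S$. Given any $a\in\Pos(\K(f_1,\dots,f_r))$ and $\varepsilon>0$, the polynomial $a+\varepsilon\Unit$ is strictly positive on $\K(f_1,\dots,f_r)$, hence lies in $\S$ by Theorem~\ref{archpos}, so $L(a)+\varepsilon L(\Unit)=L(a+\varepsilon\Unit)\geq 0$. Since $\Unit\in\S$ we have $L(\Unit)\geq 0$, and letting $\varepsilon\searrow 0$ gives $L(a)\geq 0$. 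Condition \textit{(ii)} of Haviland's theorem (Proposition~\ref{haviland}) is therefore satisfied, and $L$ is a $\K(f_1,\dots,f_r)$-moment functional.

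No step presents a real obstacle: the only mildly delicate point is verifying that the monomials arising in \eqref{semiringf} rearrange into the asymmetric form \eqref{specialform}, which is just the parity-splitting $n_i=2n_i'+e_i$. Everything else is a direct combination of the preceding preparation, Theorem~\ref{archpos}, and Haviland's theorem.
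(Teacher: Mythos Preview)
Your proposal is correct and follows essentially the same route as the paper: the construction you describe (taking $g_j = x_j$, applying Proposition~\ref{archpreorder} to $\alpha_j\pm x_j$, collecting the finitely many squares, and invoking Lemma~\ref{archhilfs}) is exactly the one set up in the discussion preceding the theorem, and the paper's proof then just cites Theorem~\ref{archpos} and Haviland's theorem as you do. Your added detail on the parity split $n_i = 2n_i' + e_i$ and on verifying condition~\textit{(ii)} of Proposition~\ref{haviland} is a welcome elaboration of what the paper leaves implicit. One small slip worth fixing: in identifying $\characters(\A,\S)$ with the point evaluations at $\K(f_1,\dots,f_r)$ you have the two justifications swapped---the fact that $f_j\in\S$ yields $\characters(\A,\S)\subseteq\K(f_1,\dots,f_r)$, while the reverse inclusion comes from the generators $f_j$ and $p_i^2$ of $\S$ being nonnegative on $\K(f_1,\dots,f_r)$ (equivalently, $\S\subseteq\T(f_1,\dots,f_r)$ gives $\characters(\A,\T)\subseteq\characters(\A,\S)$, not the inclusion you wrote).
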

\begin{proof}
  By its construction, the semiring $\S$ defined above is generated by polynomials $f_1,\dots,f_r$,  $p_1^2,\dots,p_s^2$. Since $\S$  is Archimedean, the Archimedean Positivstellensatz
  for semirings (Theorem \ref{archpos})   yields $q\in \S$. This means that $q$ is  a finite sum of
  terms \eqref{specialform}. 
  By Haviland's theorem (Proposition \ref{haviland}) this implies the last assertion.
\end{proof}

Let us briefly discuss the preceding construction and results:
Recall that by the Archimedean Positivstellensatz for
preorderings, positive polynomials  on the compact semialgebraic set $\K(f_1,\dots,f_r)$ are finite sums of terms $f_1^{e_1}\cdots f_r^{e_r} \sigma$,
with $\sigma\in \sum \RR[x_1,\dots,x_d]^2$.
Theorem \ref{auxsemiring} sharpens this assertion: It specifies the sums of squares  and shows that  the preordering $\T(f_1,\dots,f_r)$ can be replaced by a finitely generated subsemiring.
Note that  a finitely generated preordering  is not necessarily finitely generated as a semiring. 

It should be emphasized that the above construction applies to {\it any} set of generators $g_1,\dots,g_m$ of the algebra $\RR[x_1,\dots,x_d].$ For instance, we may choose $d$ linearly
independent linear polynomials $g_1,\dots,g_d$ satisfying $g_1(0)=\dots=g_d(0)=0$ and set $g_{d+1}=1$. Then \eqref{poshj}
says that semialgebraic set $\K(f_1,\dots,f_r)$ is strictly contained in the compact polyhedron
\begin{align*}
  \set[\big]{ x\in \RR^d }{ \beta_1\geq g_1(x)\geq -\alpha_1,\dots, \beta_d\geq g_d(x)\geq -\alpha_d }
  .
\end{align*}

\section{Archimedean Semirings Generated by Supporting Polynomials of Compact Convex Sets}\label{convexsets}

Let $\A_d$ denote the real vector space of polynomials in $x_1,\dots,x_d$ of degree at most one, i.e.
\begin{equation*}
  \A_d = \set[\Big]{\alpha + \sum\nolimits_{j=1}^d \beta_j x_j}{\alpha, \beta_1, \dots, \beta_d \in \RR} \subseteq \RR[x_1,\dots,x_d]
  .
\end{equation*}
For a subset $G$ of $\A_d$, let $\C(G)$ be the convex cone of $\A_n$  generated by $G \cup \{\Unit\}$ and we define
\begin{equation}\label{eqdefinitionkg}
  \K(G) \coloneqq \set[\big]{t\in \RR^d}{g(t) \ge 0~~ \textrm{for all}~~ g\in G}
  .
\end{equation}
Clearly, the set $\K(G)$ is convex and closed.
We will use the following version of the Farkas lemma.

\begin{lemma} \label{lemma:affineSeparation}
  Let $G$ be a subset of $\A_n$. Suppose that $\K(G)$ is non-empty and compact. Then, for every $h\in \A_d$  such that 
  $h\notin  \C(G)$ there exists a point $t_0\in \K(G)$ such that $h(t_0) \le 0$.
\end{lemma}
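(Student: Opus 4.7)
I would prove the contrapositive: if $h \in \A_d$ satisfies $h(t) > 0$ for every $t \in \K(G)$, then $h \in \C(G)$. The strategy is to reduce to a finite subfamily of $G$ and then apply a Hahn--Banach separation argument in the finite-dimensional space $\A_d \cong \RR^{d+1}$.

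The first task is to extract a finite subfamily $G' \subseteq G$ such that $\K(G')$ is already compact and $h$ remains strictly positive on $\K(G')$. Writing $g = \alpha_g + \sum_j \beta_g^{(j)} x_j$, compactness of $\K(G)$ forces the recession cone $\{w \in \RR^d \mid \beta_g \cdot w \geq 0 \textup{ for all } g \in G\}$ to be trivial, since otherwise a non-zero $w$ in it would give an unbounded ray $\tau + \lambda w$ inside $\K(G)$. Hence for each unit vector $u \in S^{d-1}$ some $g \in G$ has $\beta_g \cdot u < 0$, and a continuity argument on the compact sphere $S^{d-1}$ produces finitely many $g_1, \dots, g_{N_0} \in G$ whose defining half-spaces already cut out a bounded, hence compact, polytope $B \coloneqq \K(g_1, \dots, g_{N_0}) \supseteq \K(G)$. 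Next, the family of closed sets $\{g \geq 0\} \cap B \cap \{h \leq 0\}$ indexed by $g \in G$ consists of compact subsets of $B$ with empty overall intersection (since $h > 0$ on $\K(G)$). The finite intersection property yields additional $g_{N_0+1}, \dots, g_N \in G$ such that, setting $G' \coloneqq \{g_1, \dots, g_N\}$, the polytope $\K(G')$ is compact, contains $\K(G) \neq \emptyset$, and is disjoint from $\{h \leq 0\}$; in particular $h > 0$ on $\K(G')$.

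For this finite $G'$, the cone $\C(G')$ is the conic hull of finitely many vectors in $\A_d$ and is therefore closed. Assuming $h \notin \C(G')$, Hahn--Banach yields a linear functional $L$ on $\A_d$ with $L(h) < 0$ and $L(\Unit), L(g_i) \geq 0$ for $i = 1, \dots, N$. If $L(\Unit) > 0$, normalising to $L(\Unit) = 1$ realises $L$ as evaluation at the point $\tau \coloneqq (L(x_1), \dots, L(x_d)) \in \RR^d$; the inequalities $L(g_i) \geq 0$ place $\tau$ into $\K(G')$, yet $h(\tau) = L(h) < 0$ contradicts positivity of $h$ on $\K(G')$. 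If $L(\Unit) = 0$, then $L$ is determined by a non-zero vector $w \in \RR^d$ with $\beta_{g_i} \cdot w \geq 0$ for all $i$, and starting from any $\tau \in \K(G')$ the ray $\{\tau + \lambda w \mid \lambda \geq 0\}$ lies in $\K(G')$, contradicting compactness. Either way $h \in \C(G') \subseteq \C(G)$, giving the desired contradiction.

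The main obstacle is the reduction step: trimming the possibly infinite family $G$ down to a finite $G'$ while simultaneously preserving compactness of the associated polytope (via triviality of the recession cone) and strict positivity of $h$ on it (via the finite intersection property). Once that reduction is in place, the finite-dimensional closed-cone separation dichotomy is essentially routine, as the only alternative to a point-evaluation functional is a non-trivial recession direction, which was excluded by construction.
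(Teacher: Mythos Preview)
Your argument is correct, but it takes a longer route than the paper. The paper applies the convex separation theorem directly to the (possibly non-closed) cone $\C(G)$ in the finite-dimensional space $\A_d$: since $h\notin\C(G)$, there is a nonzero linear functional $\omega$ with $\omega(h)\le\omega(f)$ for all $f\in\C(G)$, and the same dichotomy you use ($\omega(\Unit)>0$ gives a point of $\K(G)$; $\omega(\Unit)=0$ contradicts compactness via a ray from some $s\in\K(G)$) finishes the proof in a few lines. Your reduction to a finite subfamily $G'$ is the extra ingredient: you introduce it precisely so that $\C(G')$ becomes polyhedral and closed, allowing \emph{strict} separation $L(h)<0$. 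This works, and the recession-cone/FIP extraction is carried out correctly, but it is unnecessary because in finite dimensions weak separation of a point from an arbitrary convex set is always available, and weak separation is all that is needed here. What your detour buys is a more constructive flavour (a concrete finite system witnessing the inequality) at the cost of a substantially longer argument; the paper's version trades that for brevity by invoking separation once, without any preprocessing of $G$.
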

\begin{proof}
  The vector space $\A_d$ has dimension $d+1$. Its dual space $\A_d^*$ is given by the functionals 
  \begin{equation*}
    \omega_{\lambda,t}(f)\coloneqq \lambda\alpha+\sum\nolimits_{j=1}^d \beta_jt_j~~ \textrm{for}~~ f= \alpha+\sum\nolimits_{j=1}^d \beta_jx_j \in \A_d,~~\textrm{where}~ \lambda\in \RR,~t=(t_1,\dots,t_d)\in \RR^d.
  \end{equation*}
  Clearly, 
  \begin{equation*}
     \omega_{\lambda,t}(f) \coloneqq f(t) + (\lambda-1) f(0).
  \end{equation*}
  Since  $\RR^d \ni t \mapsto f(t)-f(0) \in \RR$ is linear for $f\in \A_d$, it follows that $\RR^{1+d} \ni (\lambda,t) \mapsto \omega_{\lambda,t}\in \A_d^*$ is a linear map.
   Note that $\omega_{1,t}$ is just the evaluation functional at $t$.
  
  Since $h$ does not belong to the convex set $\C(G)$, by the convex separation theorem there exist a linear functional
  $\omega_{\lambda,t}\neq 0$, where $\lambda \in \RR$, $t\in \RR^d$, such that
  $\omega_{\lambda,t}(h) \le \omega_{\lambda,t}(f)$ for all $f\in \C(G)$.  From $0\in \C(G)$ it follows that $\omega_{\lambda,t}(h) \le 0$.
  If $f\in \C(G)$, then also $\mu f\in \C(G)$, hence $\omega_{\lambda,t}(h) \le \omega_{\lambda,t}(\mu f) = \mu \,\omega_{\lambda,t}(f)$, for all $\mu \in {[0,\infty)}$,
  so we have $\omega_{\lambda,t}(h) \le 0 \le \omega_{\lambda,t}(f)$. From $\Unit \in \C(G)$ we get $0 \le \omega_{\lambda,t}(\Unit) = \lambda$.
  
  We verify that $\lambda \neq 0$.
   Assume to the contrary that $\lambda = 0$, i.e.~$\omega_{0,t}(h) \le 0 \le \omega_{0,t}(f)$ for all $f\in \C(G)$.
  By assumption $\K(G)$ is not empty, so there exists a point $s\in \K(G)$, so $0 \le g(s) = \omega_{1,s}(g)$ for all $g\in \C(G)$.
  But then $0 \le \omega_{1,s}(g) + \lambda \omega_{0,t}(g) = \omega_{1,s+\lambda t}(g) = g(s+\lambda t)$ for all 
  $\lambda \in {[0,\infty)}$ and $g\in G$, which means $s+\lambda t \in \K(G)$ for all $\lambda \in {[0,\infty)}$.
  Since $t\neq 0$ because $\omega_{0,t}\neq 0$, we have a contradiction to the compactness of $\K(G)$. Thus, $\lambda >0$.
  
  Therefore,  for $f\in \C(G)$, we obtain
  \begin{equation*}
    \lambda\, h(\lambda^{-1} t) = \lambda\, \omega_{1,\lambda^{-1} t}(h) = \omega_{\lambda,t}(h) \le 0 \le \omega_{\lambda,t}(f) = \lambda \,\omega_{1,\lambda^{-1}t}(f) = \lambda\, f(\lambda^{-1} t).
  \end{equation*}
  In particular, $0 \le g(\lambda^{-1} t)$ for all $g\in G$, that is, ~$t_0\coloneqq\lambda^{-1} t \in \K(G)$, and $h(t_0) \le 0$.
\end{proof}

The assertion of Lemma \ref{lemma:affineSeparation} does not hold in general if $\K(G)$ is empty (for instance, take 
$G=\{x-k\Unit\, |\, k\in \NN\}\subseteq \A_1$ and $h=-\Unit$ like in Example~\ref{example:pathological} below).

\begin{proposition}\label{propokg}
  Let $G$ be a subset of $\A_d$ and let $\S(G)$ be the semiring of\, $\RR[x_1,\dots,x_d]$  generated by $G$. Suppose that the set  $\K(G)$ is non-empty and compact.
  Then $\S(G)$ is Archimedean and  each $p\in \RR[x_1,\dots,x_d]$  such that $p(t) > 0$ for all $t\in \K(G)$ is in $\S(G)$.
\end{proposition}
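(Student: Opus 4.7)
The plan is to combine the affine separation lemma (Lemma~\ref{lemma:affineSeparation}) with the Archimedean Positivstellensatz for semirings (Theorem~\ref{archpos}). The whole argument runs in two steps: first check that $\S(G)$ is Archimedean, then apply Theorem~\ref{archpos} after identifying its characters.

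For Archimedeanness, I invoke Lemma~\ref{archhilfs} with the algebra generators $x_1,\dots,x_d$ of $\RR[x_1,\dots,x_d]$: it suffices to exhibit constants $\lambda_j>0$ with $\lambda_j\Unit\pm x_j\in\S(G)$. Since $\K(G)$ is compact, each coordinate function $x_j$ is bounded on $\K(G)$, so there is $\lambda_j>0$ such that the affine polynomials $\lambda_j\Unit\pm x_j\in\A_d$ are strictly positive on $\K(G)$. Applying Lemma~\ref{lemma:affineSeparation} contrapositively to $h=\lambda_j\Unit\pm x_j$: if such an $h$ were not in $\C(G)$, there would exist $t_0\in\K(G)$ with $h(t_0)\leq 0$, contradicting strict positivity. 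Hence $\lambda_j\Unit\pm x_j\in\C(G)\subseteq\S(G)$, and $\S(G)$ is Archimedean by Lemma~\ref{archhilfs}.

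For the main statement, I identify the characters of $\A=\RR[x_1,\dots,x_d]$ that are nonnegative on $\S(G)$ with the points of $\K(G)$. Every character of $\A$ has the form $\chi_t$ for some $t\in\RR^d$. On the one hand, if $\chi_t\in\characters(\A,\S(G))$, then in particular $g(t)=\chi_t(g)\geq 0$ for every $g\in G$, so $t\in\K(G)$. Conversely, for $t\in\K(G)$ the evaluation $\chi_t$ is nonnegative on $G\cup\{\Unit\}$; being multiplicative, it is then nonnegative on every product of elements from $G\cup\{\Unit\}$ and hence on all nonnegative linear combinations of such products, i.e.\ on $\S(G)$. Therefore $\characters(\A,\S(G))=\{\chi_t:t\in\K(G)\}$.

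Now take $p\in\RR[x_1,\dots,x_d]$ with $p(t)>0$ for all $t\in\K(G)$. This exactly says $\varphi(p)>0$ for every $\varphi\in\characters(\A,\S(G))$. Since $\S(G)$ is Archimedean, Theorem~\ref{archpos} gives $\epsilon>0$ with $p\in\epsilon\Unit+\S(G)\subseteq\S(G)$, completing the proof. The main subtlety is only the strict-versus-weak distinction in applying Lemma~\ref{lemma:affineSeparation}, which is handled by enlarging the bounding constants $\lambda_j$ slightly; all other steps are routine once the character identification is in place.
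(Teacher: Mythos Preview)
Your proof is correct and follows essentially the same approach as the paper: both use compactness of $\K(G)$ together with Lemma~\ref{lemma:affineSeparation} to place $\lambda\Unit\pm x_j$ in $\C(G)\subseteq\S(G)$, then invoke Lemma~\ref{archhilfs} for Archimedeanness and Theorem~\ref{archpos} for the Positivstellensatz. Your version is slightly more explicit in spelling out the identification $\characters(\A,\S(G))=\{\chi_t:t\in\K(G)\}$, which the paper leaves implicit.
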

\begin{proof}
  Note that $\C(G)\subseteq \S(G)$ by definition.
  Since $\K(G)$ is compact, there exists $\lambda \in {(0,\infty)}$ such that $\lambda - t_j > 0$ and $\lambda + t_j > 0$ for all $t\in \K(G)$ and $j=1,\dots,d.$
 Then $\lambda\Unit - x_j, \lambda\Unit + x_j \in \C(G) \subseteq \S(G)$ by Lemma~\ref{lemma:affineSeparation}. Hence $\S(G)$ is Archimedean by Lemma~\ref{archhilfs} and  
 Theorem \ref{archpos}  applies.
\end{proof}
An application of Proposition~\ref{propokg} is the following.
A similar characterization of strictly positive elements is not true  for quadratic modules,
because they  contain squares that vanish at points of $C$.

\begin{corollary} \label{corollary:truestrictsatz}
  Let $C$ be a non-empty  compact convex subset of $\RR^d$ and set
  \begin{equation}
    G \coloneqq \set[\big]{g\in \A_d}{g(t) > 0~\textup{ for all }~t\in C}
    .
  \end{equation}
  Then for $p\in \RR[x_1,\dots,x_d]$ the following are equivalent:
  \begin{enumerate}
    \item\label{item:truestrictsatz:ana} $p(t) > 0$ for all $t\in C$.
    \item\label{item:truestrictsatz:alg} \label{item:archposechtstrikt:alg} $p$ is a  sum of products of elements of $G$.
  \end{enumerate}
\end{corollary}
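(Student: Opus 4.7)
The plan is to apply Proposition~\ref{propokg} to the set $G$. The direction \refitem{item:truestrictsatz:alg}$\Rightarrow$\refitem{item:truestrictsatz:ana} is essentially trivial: each $g\in G$ is strictly positive on $C$ by definition, hence any (nonempty) product of elements of $G$ is strictly positive on $C$, and a nonempty sum of strictly positive quantities is strictly positive. The non-degeneracy (nonempty sum, nonempty products) is automatic: if $p$ is given as a sum of products of elements of $G$ representing a polynomial positive at some point of $C$, the sum cannot be empty.

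For the harder direction \refitem{item:truestrictsatz:ana}$\Rightarrow$\refitem{item:truestrictsatz:alg}, the first step is to identify the set $\K(G)$ from \eqref{eqdefinitionkg} with $C$. The inclusion $C \subseteq \K(G)$ is immediate since each $g\in G$ satisfies $g(t)>0 \geq 0$ on $C$. For the reverse inclusion, let $t\in \RR^d \setminus C$. Since $C$ is nonempty, closed, and convex, and $\{t\}$ is compact, the strict separation theorem (Hahn--Banach) yields an affine-linear functional $L\in \A_d$ with $L(s)>0$ for all $s\in C$ and $L(t)<0$. By definition $L\in G$, witnessing $t\notin \K(G)$. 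Thus $\K(G)=C$, which is nonempty and compact by hypothesis.

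Now Proposition~\ref{propokg} applies directly: the semiring $\S(G)$ is Archimedean, and since $p(t)>0$ for all $t\in\K(G)=C$, we conclude $p\in \S(G)$. By the construction of the semiring generated by $G$, this means
\begin{equation*}
  p = \sum\nolimits_{i} \alpha_i\, g_{i,1}\cdots g_{i,k_i}
\end{equation*}
with $\alpha_i\in [0,\infty)$, $k_i\in \NN_0$, and $g_{i,j}\in G$. Dropping terms with $\alpha_i=0$ and noting that the positive constant function $\alpha_i\Unit$ lies in $G$ whenever $\alpha_i>0$ (since any positive constant is strictly positive on $C$), we absorb each positive scalar $\alpha_i$ as an extra factor and obtain $p$ as a sum of products of elements of $G$, as required.

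The only step that requires any real content is the identification $\K(G)=C$, which rests on the strict separation theorem for a compact convex set and an exterior point; everything else reduces to bookkeeping and an appeal to Proposition~\ref{propokg}. I do not foresee any genuine obstacle, since the Archimedean semiring Positivstellensatz has already been lifted, in Proposition~\ref{propokg}, to the precise form needed here.
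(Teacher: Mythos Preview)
Your proof is correct and follows essentially the same approach as the paper's: identify $\K(G)=C$ via the strict separation theorem, invoke Proposition~\ref{propokg}, and note that the trivial direction holds because elements of $G$ are strictly positive on $C$. The paper compresses your bookkeeping about absorbing positive scalars into the single observation that $\S(G)\setminus\{0\}$ coincides with the set described in \refitem{item:truestrictsatz:alg}, but the content is identical.
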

\begin{proof}
  Note that $\S(G)\setminus\{0\}$ is the set of elements  in ~\refitem{item:truestrictsatz:alg}.
  From the  separation theorem for convex set it follows that $\K(G) = C$. Thus, Proposition~\ref{propokg} yields
  \refitem{item:truestrictsatz:ana} $\Longrightarrow$ \refitem{item:truestrictsatz:alg}.
  The converse implication is trivial.
\end{proof}

Finally, we will  restate Proposition \ref{propokg} in terms of convex analysis as Theorem \ref{proposition:compactconvexssemiring}.

Suppose $C$ is  a compact convex set in $\RR^d$.
By a \emph{supporting affine functional} at a point $t_0\in C$   we mean a polynomial $h\in \A_d$, $h\neq 0$,
such that $h(t_0)=0$ and $h(t)\geq 0$ for all $t\in C$. If such a functional exists, then $t_0$ is a boundary point of $C$. The set of supporting affine functionals at $t_0$ is denoted by $\cH(t_0;C)$.
By definition, $C$ is contained in the supporting half-space $\set{t\in \RR^d }{ h(t)\geq 0 }$ for each $h\in \cH(t_0;C)$.
 It is well-known (and follows easily from a separation theorem of convex sets) that
\begin{equation}\label{cintersectionh}
  C = \bigcap\nolimits_{t_0\in C} \set[\big]{ t\in \RR^d }{ h(t)\geq 0 ~~ \textup{for all}~~ h\in \cH(t_0;C) }.
\end{equation}
In general, not all points $t_0\in C$ and  functionals $h\in \cH(t_0;C)$ are needed to get the equality \eqref{cintersectionh}.

\begin{theorem} \label{proposition:compactconvexssemiring}
  Let $C$ be a non-empty compact convex subset of $\RR^d$, $d\in \NN$. Suppose that  $\cH$ is a set of supporting affine functionals at  points of $C$ such that
  \begin{equation}\label{conditioncH}
    C= \set[\big]{ t\in \RR^d }{ h(t)\geq 0 ~~\textup{for all}~~ h\in \cH }.
  \end{equation}
   Let $\S(\cH)$ denote the  semiring of\, $\RR[x_1,\dots,x_d]$ generated by  $\cH$.
  Then $\S(\cH)$ is  Archimedean. If $f\in \RR[x_1,\dots,x_d]$ satisfies $f(t)>0$ for all $t \in C$, then $f\in \S(\cH)$.
\end{theorem}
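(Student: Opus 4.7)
The plan is to recognize this as an essentially immediate corollary of Proposition~\ref{propokg}. Recall that a supporting affine functional at a point of $C$ is by definition an element of $\A_d$ (the space of affine polynomials), so $\cH \subseteq \A_d$. Thus the hypotheses of Proposition~\ref{propokg} are available, and the work reduces to verifying the compatibility of notation.

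First I would observe that the hypothesis \eqref{conditioncH} is exactly the statement $\K(\cH) = C$ in the notation of \eqref{eqdefinitionkg}. Since $C$ is non-empty and compact by assumption, the set $\K(\cH)$ is non-empty and compact. Proposition~\ref{propokg} therefore applies directly with $G = \cH$, giving both that the semiring $\S(\cH) \subseteq \RR[x_1,\dots,x_d]$ generated by $\cH$ is Archimedean and that every $f \in \RR[x_1,\dots,x_d]$ with $f(t) > 0$ for all $t \in \K(\cH) = C$ belongs to $\S(\cH)$. That is the full conclusion.

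There is essentially no obstacle here beyond unpacking definitions; the only thing to double-check is that the set $\cH$ is genuinely a subset of $\A_d$ (so $\S(\cH)$ coincides with the semiring in the sense of Proposition~\ref{propokg}) and that \eqref{conditioncH} matches the definition \eqref{eqdefinitionkg} of $\K(G)$ verbatim. Since supporting affine functionals are affine by definition and the two set-builder expressions are identical, both checks are immediate. Thus the theorem is a convenient reformulation of Proposition~\ref{propokg} in the language of convex analysis, where $\cH$ is thought of as a family of supporting hyperplanes whose intersection of closed half-spaces recovers $C$.
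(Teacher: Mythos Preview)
Your proposal is correct and matches the paper's own proof essentially verbatim: the paper also observes that \eqref{conditioncH} says precisely $\K(\cH)=C$ and then invokes Proposition~\ref{propokg} with $G=\cH$. There is nothing to add.
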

\begin{proof} 
Since $\K(\cH)=C$ by assumption \eqref{conditioncH} and definition \eqref{eqdefinitionkg}, the assertions follow at once from Proposition \ref{propokg}.
\end{proof}

\begin{example}
  Let $C \coloneqq \set{(x,y)\in\RR^2}{x^2 + y^2 \le 1}$ be the unit disc in $\RR^2$. For $\vartheta \in {[0,2\pi)}$ we define
  $h_\vartheta \coloneqq 1 + \cos(\vartheta) x + \sin(\vartheta) y$. Then the  set of {\it all} supporting affine functionals at points of $C$ is
  $\set{\lambda h_\vartheta}{\lambda \in {(0,\infty)}}$. Therefore, it follows from Proposition~\ref{proposition:compactconvexssemiring}
  that each polynomial $f\in \RR[x,y]$ such that $f(t)>0$ for all $t\in C$ belongs to the semiring generated by all $h_\vartheta$, 
  $\vartheta \in {[0,2\pi)}$. If $J$ is a dense subset of $[0,2\pi)$ and $\cH=\set{h_\vartheta }{ \vartheta \in J }$, then $f$ is even in $\S(\cH)$.
   
  Let $K(g_1,g_2) \coloneqq \set{ (x,y)\in \RR^2 }{ g_1 \coloneqq y-x+1\geq 0,~ g_2\coloneqq y+x+1\geq 0 }$.
  Then Proposition~\ref{proposition:compactconvexssemiring} applies  to $C_0\coloneqq C\cap\K(g_1,g_2)$  and 
  $\cH_0= \set{ g_1, g_2, h_\vartheta }{ \vartheta \in (0,\pi) }$ as well. Thus, if $f\in \RR[x,y]$ satisfies
  $f(t)>0$ for all $t\in C_0$, then  $f\in\S(\cH_0)$.
\end{example}

\section{Sharpening the Positivstellensatz for Semialgebraic Sets Contained in  Compact Polyhedra}\label{sharpenpolyhedra}

We begin with a general observation:
Let $\A=\RR[x_1,\dots,x_d]$ and $f_0 \coloneqq \Unit, f_1,\dots,f_r\in \A$, $r\in \NN$. Suppose that $\S$ is a semiring of
$\A$ and consider the $\S$-module $\C$ defined by
\begin{equation}\label{defcsm}
  \C \coloneqq f_0\cdot \S+ f_1\cdot \S+\cdots+f_r\cdot\S
  .
\end{equation}
Then it is easily verified that
\begin{equation}\label{setks}
  \characters(\A,\C) = \characters(\A,\S) \cap \set[\big]{ x\in \RR^d }{ f_1(x)\geq 0,\dots,f_r(x)\geq 0 }
  .
\end{equation}
Note that in general the $\S$-module $\C$ in \eqref{defcsm} is  neither a semiring nor a quadratic module. 
If the semiring $\S$ is finitely generated, then  $\characters(\A,\C)$  is  semialgebraic. However,  if $\S$ 
is not finitely generated,  $\characters(\A,\S)$ and   $\characters(\A,\C)$ are not necessarily semialgebraic. 

Suppose that $\S$ is Archimedean. Then Theorem \ref{archpos} gives the following: If $p\in \A$ and $p(t)>0$ for all
$t\in \characters(\A,\C)$, then $p\in \varepsilon\Unit+ \C$ for all $\varepsilon >0$.

\begin{theorem}\label{polyhedron2}
  Let $f_0=\Unit,f_1,\dots,f_r\in \A\coloneqq\RR[x_1,\dots,x_d]$, $r\in \NN,$ and let $G$ be a subset of $\A_d$ such that the convex set $\K(G)$
  is non-empty and compact. Recall that $\S(G)$ denotes the semiring of $\A$ generated by $G$.
  If $p\in\RR[x_1,\dots,x_d]$ satisfies $p(x)>0$ for all $x\in \K(f_1,\dots,f_r) \cap \K(G)$, then there are $h_0,\dots h_r \in \S(G)$
  such that
  \begin{equation}\label{containedinc}
    p = f_0 h_0 + \dots + f_r h_r.
  \end{equation}
\end{theorem}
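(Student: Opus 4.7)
The plan is to apply the general observation on $\S$-modules from the opening of the section directly with $\S \coloneqq \S(G)$ and then invoke the Archimedean Positivstellensatz for semirings (Theorem~\ref{archpos}). So I would simply collect the ingredients and verify they fit together.

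First, I would define the $\S(G)$-module
\begin{equation*}
  \C \coloneqq f_0 \cdot \S(G) + f_1 \cdot \S(G) + \dots + f_r \cdot \S(G)
\end{equation*}
as in \eqref{defcsm}; it is straightforward to check that $\C$ is indeed an $\S(G)$-module (it contains $\Unit = f_0 \cdot \Unit$, is closed under addition and nonnegative scaling, and is invariant under multiplication by $\S(G)$ since each $f_i \S(G)$ is). The decomposition claimed in \eqref{containedinc} is precisely the statement $p\in \C$, so it suffices to show $p\in \C$.

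Next I would establish that $\C$ is Archimedean. By Proposition~\ref{propokg}, the hypothesis that $\K(G)$ is non-empty and compact guarantees that $\S(G)$ itself is Archimedean. Since $\S(G) = f_0 \cdot \S(G) \subseteq \C$, the module $\C$ inherits the Archimedean property.

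Then I would identify $\characters(\A,\C)$. Recall from \eqref{setks} that
\begin{equation*}
  \characters(\A,\C) = \characters(\A,\S(G)) \cap \set[\big]{x\in \RR^d}{f_1(x)\ge 0, \dots, f_r(x)\ge 0},
\end{equation*}
where I identify characters of $\A = \RR[x_1,\dots,x_d]$ with points of $\RR^d$. Because $G \subseteq \S(G)$, we have $\characters(\A,\S(G)) \subseteq \K(G)$, and conversely every point of $\K(G)$ gives a character nonnegative on the generators of $\S(G)$, hence on all of $\S(G)$; thus $\characters(\A,\S(G)) = \K(G)$. Consequently $\characters(\A,\C) = \K(G) \cap \K(f_1,\dots,f_r)$.

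Finally, the hypothesis $p(x) > 0$ for $x \in \K(G) \cap \K(f_1,\dots,f_r)$ translates into $\varphi(p) > 0$ for all $\varphi \in \characters(\A,\C)$, so Theorem~\ref{archpos} applied to the Archimedean $\S(G)$-module $\C$ yields $p \in \C$ (in fact $p \in \epsilon \Unit + \C$ for some $\epsilon > 0$), which is exactly the decomposition \eqref{containedinc}. There is no real obstacle here: once one notices that the $\S(G)$-module $\C$ is Archimedean and that its character set is $\K(G) \cap \K(f_1,\dots,f_r)$, the theorem is an immediate application of the Archimedean Positivstellensatz for semirings.
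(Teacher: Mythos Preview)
Your proposal is correct and follows essentially the same route as the paper's proof: define the $\S(G)$-module $\C = f_0\S(G)+\dots+f_r\S(G)$, use Proposition~\ref{propokg} to see that $\S(G)$ is Archimedean, identify $\characters(\A,\C)$ with $\K(G)\cap\K(f_1,\dots,f_r)$ via \eqref{setks}, and invoke Theorem~\ref{archpos}. The only minor remark is that Theorem~\ref{archpos} requires the \emph{semiring} $\S(G)$ to be Archimedean (not the module $\C$), so your verification that $\C$ itself is Archimedean is correct but not needed.
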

\begin{proof}
  Consider the $\S(G)$-module $\C \coloneqq f_0 \cdot \S(G) + \dots + f_r \cdot \S(G)$. By \eqref{setks}, $\characters(\A,\C)$
  are the evaluation functionals at points of $\K(f_1,\dots,f_r) \cap \K(G)$.
  The semiring $\S(G)$ is Archimedean by Proposition~\ref{propokg}. Therefore the Positivstellensatz for
  modules of Archimedean semirings (Theorem \ref{archpos}) applies to the $\S(G)$-module $\C$:
  If $p(x)>0$ for all $x\in \K(f_1,\dots,f_r) \cap \K(G)$, then  $p \in \C$ which yields \eqref{containedinc}.
\end{proof}

\begin{corollary}\label{corollary:polyhedron1}
  Let $f_0=\Unit,f_1,\dots,f_r\in \RR[x_1,\dots,x_d]$, $r\in \NN$. Suppose $g_1,\dots,g_m\in\RR[x_1,\dots,x_d]$
  are polynomials of degree one such that the polyhedron\, $\K(g_1,\dots,g_m)\coloneqq \set[]{ x\in \RR^d }{ g_1(x),\dots,g_m(x)\geq 0 }$
  is a non-empty compact set that contains the semialgebraic set $\K(f_1,\dots,f_r)$ defined by \eqref{semikf}.
  If $p\in\RR[x_1,\dots,x_d]$ satisfies $p(x)>0$ for all $x\in \K(f_1,\dots,f_r)$, then $p$ is a finite sum of polynomials
  \begin{align}\label{specialform1}
    \alpha\, f_j \, g_1^{n_1}\cdots g_m^{n_m},~~~\textit{where}~~\alpha\in (0,+\infty),~ j=0,\dots,r;~n_1,\dots,n_m\in \NN_0.
  \end{align}
  Any linear functional on $\RR[x_1,\dots,x_d]$ that is nonnegative on all polynomials in \eqref{specialform1} (with $\alpha=1$)
  is a $\K(f_1,\dots,f_n)$-moment functional.
\end{corollary}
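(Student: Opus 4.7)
The plan is to deduce the corollary directly from Theorem~\ref{polyhedron2} by taking $G \coloneqq \{g_1,\dots,g_m\}$. Since each $g_i$ has degree at most one, $G\subseteq \A_d$, and by hypothesis $\K(G)=\K(g_1,\dots,g_m)$ is non-empty and compact, so all the assumptions of Theorem~\ref{polyhedron2} are satisfied. Moreover, the semiring $\S(G)$ generated by $g_1,\dots,g_m$ is, by the description \eqref{semiringf}, the set of finite sums $\sum_n \alpha_{n_1,\dots,n_m}\, g_1^{n_1}\cdots g_m^{n_m}$ with $\alpha_{n_1,\dots,n_m}\in[0,+\infty)$.

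First I would note that the inclusion $\K(f_1,\dots,f_r)\subseteq \K(g_1,\dots,g_m)=\K(G)$ gives $\K(f_1,\dots,f_r)\cap \K(G)=\K(f_1,\dots,f_r)$, so the hypothesis $p>0$ on $\K(f_1,\dots,f_r)$ is exactly the hypothesis of Theorem~\ref{polyhedron2}. That theorem then furnishes $h_0,\dots,h_r\in \S(G)$ with $p=f_0h_0+\dots+f_rh_r$. Expanding each $h_j$ as a nonnegative linear combination of monomials $g_1^{n_1}\cdots g_m^{n_m}$ expresses $p$ as a finite sum of terms of the form \eqref{specialform1}, proving the first assertion.

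For the moment-functional claim I would invoke Haviland's theorem (Proposition~\ref{haviland}). Let $L$ be a linear functional on $\RR[x_1,\dots,x_d]$ that is nonnegative on every polynomial $f_j\, g_1^{n_1}\cdots g_m^{n_m}$. By linearity $L$ is then nonnegative on every nonnegative linear combination of such monomials, i.e.\ on the entire $\S(G)$-module $\C\coloneqq f_0\S(G)+\dots+f_r\S(G)$. In particular, since $\Unit=f_0\cdot g_1^0\cdots g_m^0$ is one of the generators, $L(\Unit)\geq 0$. Now fix any $a\in\Pos(\K(f_1,\dots,f_r))$ and any $\varepsilon>0$. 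Then $a+\varepsilon\Unit>0$ on $\K(f_1,\dots,f_r)$, so the first part of the corollary places $a+\varepsilon\Unit$ in $\C$, whence $L(a)+\varepsilon L(\Unit)=L(a+\varepsilon\Unit)\geq 0$. Letting $\varepsilon\searrow 0$ yields $L(a)\geq 0$, and condition \textit{(ii)} of Proposition~\ref{haviland} identifies $L$ as a $\K(f_1,\dots,f_r)$-moment functional.

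There is no real obstacle: the corollary is essentially a specialization of Theorem~\ref{polyhedron2} followed by a routine application of Haviland's theorem. The only point worth checking carefully is that for a set $G$ of degree-one polynomials the semiring $\S(G)$ has the explicit monomial description used above, so that the representation $p=\sum_j f_j h_j$ expands into the exact shape \eqref{specialform1} claimed in the statement.
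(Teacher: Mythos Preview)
Your proof is correct and follows essentially the same route as the paper: both set $G=\{g_1,\dots,g_m\}$, use that $\S(G)$ is Archimedean to place $p$ in the $\S(G)$-module $\C=f_0\S(G)+\dots+f_r\S(G)$ (you via Theorem~\ref{polyhedron2}, the paper by invoking Theorem~\ref{archpos} directly, which is the same argument), and then appeal to Haviland's theorem for the moment assertion. Your treatment of the Haviland step is actually more explicit than the paper's, which simply cites Proposition~\ref{haviland} without spelling out the $\varepsilon$-argument.
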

\begin{proof}  
  Consider the $\S$ module $\C$ defined by \eqref{defcsm}. By assumption,\, $\characters(\A,\S)= \K(g_1,\dots,g_m)$
  contains the set $\K(f_1,\dots,f_r)$. Hence $\characters(\A,\C)= \K(f_1,\dots,f_r)$ by \eqref{setks}.
  As noted in Example \ref{module},  Theorem \ref{archpos} implies that $p\in \C$. By \eqref{defcsm} this means that $p$
  is a sum of terms as in \eqref{specialform1}. The last assertion follows from  Proposition \ref{haviland}.
\end{proof}

Corollary \ref{corollary:polyhedron1} sharpens a well-known Positivstellensatz for semirings (see \cite[Theorem 5.4.6]{PD} or 
\cite[Theorem 12.44]{sch17}). Instead of mixed products $f_1^{k_1}\cdots f_r^{k_r}$ appearing therein 
there is only a single factor $f_j$  in \eqref{specialform1}.

\section{Further Examples and Ramifications of the Archimedean Positiv\-stellensatz for Semirings}\label{examplesarch}
In most applications of the Archimedean Positivstellensatz for semirings of polynomial algebras
one verifies  the Archimedean condition for the generators $x_i$. Here is another source for getting examples.

\begin{example} ({\it Archimedean semirings on polynomial algebras via automorphisms})\\
  We set
  \begin{equation*}
    f_1(x,y)\coloneqq y-(x-1/2)^2,~~
    f_2(x,y)\coloneqq y-x^2
  \end{equation*}
  and define a semialgebraic set of $\RR^2$ by 
  \begin{equation*}
    \K\coloneqq\set[\big]{ (x,y)\in \RR^2 }{ {-1}\leq f_1(x,y)\leq 1 ~\textup{ and }~ {-1}\leq f_2(x,y)\leq 1 }.
  \end{equation*}
  The set $\K$ is bounded by four parabolas and it is compact.

  Let $\S$ be the semiring generated by $1-f_1, 1+f_1, 1-f_2, 1+f_2$. Clearly, $\characters(\A,\S)=\K$.
  We show that $\S$ is Archimedean. By definition, $f_1, f_2 \in\A^\bd(\S)$. Hence $x=(f_1-f_2+1/4)\in \A^\bd(\S)$. Then,
  $x^2\in \A^\bd(\S)$ and $y=f_1+x^2 \in \A^\bd(\S)$.  Therefore, $\A^\bd(\S)=\A$, which proves that $\S$ is Archimedean.

  Thus the Archimedean Positivstellensatz  for semirings (Theorem \ref{archpos}) applies to $\S$ and $\K$.  It means that
  each polynomial $p\in \RR[x,y]$ that is strictly positive in all points of $\K$ is a finite sum of terms
  \begin{equation*}
     \alpha (1-f_1(x,y))^i (1+f_1(x,y))^j (1-f_2(x,y)\big)^k (1+f_2(x,y))^\ell,
  \end{equation*}
  where  $\alpha\geq 0$ and $i,j,k,\ell\in \NN_0$.

  In order to  generalize this example it is convenient to look at it from another perspective. Let $f(y)\in \RR[y], g(x)\in \RR[x]$
  be polynomials in one variable and $a,b,c,d\in \RR, ab-cd\neq 0$. 
  It is well known and easily verified that the following mappings $\varphi, \psi, \eta$ define automorphisms of the  algebra $\RR[x,y]$:
  \begin{equation}\label{autom}
    \varphi (x)\coloneqq x+f(y),~
    \varphi (y)\coloneqq y,~~
    \psi(x) \coloneqq x, ~
    \psi(y)\coloneqq y+g(x),~~
    \eta(x)\coloneqq ax+by, ~
    \eta(y)\coloneqq cx+dy.
  \end{equation}
  A classical result of H.W.E.~Jung \cite{jung} states that all such mappings $\varphi, \eta $ (and likewise $\psi,\eta$) generate the
  automorphism group of the polynomial algebra $\RR[x,y]$.
  Since each {\it composition} of  mappings of the form \eqref{autom} is an automorphism,   it maps $x,y$ to {\it generators}, say 
  $x'=f_1(x,y), y'=f_2(x,y),$ of the algebra $\RR[x,y]$. Therefore, if we fix numbers $\lambda_1>0, \lambda_2>0$ and let $\S$ be the
  semiring generated by $\lambda_1\pm f_1(x,y), \lambda_2\pm f_2(x,y)$,  then  $\S$ is Archimedean and the corresponding semialgebraic
  set  is compact, so the preceding facts remain valid almost verbatim. Choosing compositions of such automorphisms provides a large source
  of examples for which Theorem \ref{archpos} applies. The above example is obtained in the special case
  $x'=(\psi\circ\varphi)(x), y'=(\psi\circ\varphi)(y)$, where 
  \begin{equation*}
    \varphi (x)=x+y- 1/4,~~
    \varphi(y) =y,~~
    \psi(x)= x,~~
    \psi(y)=y-x^2.
  \end{equation*}
\end{example}
 
\begin{example}\label{circle}
  Let $\A=\RR[x,y]$ and let $\Gamma$ be a fixed  subset of $[0,2\pi)$. 
  Let $\S_\Gamma$ denote the semiring of $\A$ generated by 
  \begin{equation*}
    1-(x \cos \gamma+ y\sin \gamma)~~\textup{and}~~ 1+(x \cos \gamma+ y\sin \gamma),~~\textup{where}~~\gamma\in \Gamma.
  \end{equation*} 
  The semiring  $\S_\Gamma$ is not finitely generated if $\Gamma$ is infinite. 
  The set $\characters(\A,\S_\Gamma)$ consists of the point evaluations of 
  \begin{equation*}
    \K(\S_\Gamma)\coloneqq \set[\big]{ (x,y)\in \RR^2 }{ 1\geq  x \cos \gamma+ y\sin \gamma \geq -1\textup{ for all } \gamma\in \Gamma }.
  \end{equation*}
  For instance, if $\Gamma=\{0,\frac{\pi}{2} \}$, then $K(\S_\Gamma)$ is a square $[-1,1]\times [-1,1]$, and if $\Gamma=[0,2\pi)$, then 
  $\K(\S_\Gamma)$ is the unit circle in $\RR^2$.

  Suppose that $\S_\Gamma$  contains $0$ and $\frac{\pi}{2}$. Then $\Unit\pm x$ and $\Unit\pm y$ are in $\S_\Gamma$. Hence, by Lemma 
  \ref{archhilfs}, the semiring $\S_\Gamma$ is Archimedean and Theorem \ref{archpos} applies. 
\end{example}

\begin{example}
  Fix $\alpha\in(0,+\infty)$. Let $\A$ be the unital real algebra generated by the rational functions 
  \begin{equation}\label{defabc}
    a
    \coloneqq
    \frac{\alpha}{x^2+\alpha^2}\, ~~ {\rm and}~~
    b\coloneqq\frac{x}{x^2+\alpha^2}\,.
  \end{equation}
  Since $\A$ is unital, $\A$ is also generated by the functions $\frac{\gamma x^2+\delta x+\eta}{x^2+\alpha^2}$,
  where $\gamma,\delta,\eta\in \RR$. Note that 
  \begin{equation}\label{rela}
    \big(a-(2\alpha)^{-1}\big)^2+ b^2=(2\alpha)^{-2}.
  \end{equation}
  We choose a number $\beta>0$ such that 
  $\beta \alpha\geq (4\beta\alpha^3)^{-1}$. Let $\S$ denote the semiring of $\A$ generated by 
  \begin{align*}
    a,~~
     c \coloneqq \frac{ x^2}{x^2+\alpha^2},~~
    c_{\pm}\coloneqq \frac{(x\pm(2\beta)^{-1})^2}{x^2+\alpha^2}.
  \end{align*}
  
  We describe the set $\characters(\A,\S)$. Obviously, the point evaluation $\chi_x$  at $x\in \RR$ is in $\characters(\A,\S)$. Further,
  $\chi_\infty(f)=\lim_{t\to \infty} f(t), f\in \A$, yields a well-defined character $\chi_\infty\in\characters(\A,\S)$. 
  From \eqref{rela} it follows that $\chi\mapsto (\chi(a),\chi(b))$ defines a mapping $\Psi$ of $\characters(\A,\S)$ into  the circle 
  \begin{equation}\label{defK}
    K
    \coloneqq \set[\big]{ (s,t)\in \RR^2 }{ (s-(2\alpha)^{-1})^2+t^2=(2\alpha)^{-2} }.
  \end{equation}
  Since $\A$ is generated by $a$ and $ b$, $\Psi$ is injective. To show that $\Psi$ is surjective, let $(s,t)\in K$. First suppose $s\neq 0$. Set $x\coloneqq t\alpha s^{-1}$.
  Using  the definition \eqref{defK} of $K$ we compute $s=\chi_x(a)$, $t=\chi_x(b)$, so that $(s,t)=\Psi(\chi_x)$. If $s=0$,
  then $t=0$ by \eqref{defK} and hence $(s,t)=(0,0)=(\chi_\infty(a),\chi(b))$. This shows that  $\Psi$ is a  bijection of $\characters(\A,\S)$ and  $K$. In particular, this implies that 
  \begin{equation*}
    \characters(\A,\S) = \set[\big]{ \chi_t }{ t\in \RR\cup \{\infty\} }.
  \end{equation*}
  If $\lambda>0$ and $\lambda \alpha-1\geq 0$, then the elements
  \begin{equation*}
    \lambda \Unit \pm a=\lambda c+ (\lambda\pm1)a,~~
    \beta\Unit \pm b= \beta c_{\pm}+\big(\beta \alpha - (4\beta \alpha^3)^{-1}\big)a
  \end{equation*}
  belong to $\S$. Therefore, $\S$ is an Archimedean semiring and Theorem \ref{archpos} applies.
\end{example}

Another application of the Archimedean Positivstellensatz for semirings is the following result.
\begin{proposition}
  Let $\A_1 \coloneqq \RR[x_1,\dots,x_n]$ and $\A_2 \coloneqq \RR[x_{n+1},\dots,x_{n+k}]$ and let $\T_1$ and $\T_2$
  be finitely generated preorderings of $\A_1$ and $\A_2$, respectively. Suppose that the semialgebraic sets 
  \begin{align*}
    \K(\A_1,\T_1)
    &\coloneqq
    \set[\big]{ x=(x_1,\dots,x_n)\in \RR^n }{ f(x)\geq 0~~ \textup{for }~ f\in \T_1 },\\
    \K(\A_2,\T_2)
    &\coloneqq
    \set[\big]{ x'=(x_{n+1},\dots,x_{n+k})\in \RR^k }{ g(x')\geq 0~~ \textup{for }~ g\in \T_2 }
  \end{align*}
  are compact. Let\, $p\in \A_1\otimes \A_2\equiv \RR[x_1,\dots,x_{n+k}]$. If\, $p(x,x')>0$ for all 
  $(x,x')\in \K(\A_1,\T_1)\times\K(\A_2,\T_2)$, then $f\in \T_1\otimes \T_2$, that is, there exist 
  polynomials $p_1,\dots,p_r\in \T_1$ and $ q_1,\dots,q_r\in \T_2$, $r\in \NN$,
  such that 
  \begin{equation*}
    p(x,x')=\sum_{i=1}^r p_i(x)q_i(x')
    .
  \end{equation*}
\end{proposition}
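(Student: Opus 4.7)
The strategy is to reduce this to a direct application of the Archimedean Positivstellensatz for semirings (Theorem~\ref{archpos}) applied to the tensor product semiring $\S \coloneqq \T_1 \otimes \T_2$ on the algebra $\A \coloneqq \A_1 \otimes \A_2 = \RR[x_1,\dots,x_{n+k}]$. As remarked after equation (2.7), $\S$ is indeed a semiring of $\A$, so the whole task reduces to (a) verifying that $\S$ is Archimedean and (b) identifying its character set correctly.

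For (a), the first step is to show that each of the finitely generated preorderings $\T_1,\T_2$ is itself Archimedean. Because $\K(\A_1,\T_1)$ is compact, there exists $\lambda>0$ such that $\lambda-x_j$ and $\lambda+x_j$ are strictly positive on $\K(\A_1,\T_1)$ for all $j=1,\dots,n$. Since $\T_1$ is finitely generated, Schmüdgen's Positivstellensatz (Proposition~\ref{archpreorder}) places these polynomials in $\T_1$. Lemma~\ref{archhilfs} applied to the generators $x_1,\dots,x_n$ of $\A_1$ then shows that $\T_1$ is Archimedean, and analogously for $\T_2$. Next I would use these Archimedean bounds on the tensor factors to bound the generators of $\A$: for each $j\le n$ the element $x_j=x_j\otimes\Unit$ satisfies $\lambda\Unit\pm x_j = (\lambda\Unit\pm x_j)\otimes \Unit \in \T_1\otimes \T_2 = \S$ because $\Unit\in\T_2$, and symmetrically for $j>n$. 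Lemma~\ref{archhilfs} applied to the generators $x_1,\dots,x_{n+k}$ of $\A$ then yields that $\S$ is Archimedean.

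For (b), every character of a finitely generated polynomial algebra is a point evaluation, so characters of $\A$ correspond to points $(x,x')\in\RR^{n+k}$. A character $\chi_{(x,x')}$ belongs to $\characters(\A,\S)$ iff $f(x)g(x')\ge 0$ for all $f\in\T_1$, $g\in\T_2$. Using $\Unit\in\T_2$ and $\Unit\in\T_1$ respectively, this forces $f(x)\ge 0$ for all $f\in\T_1$ and $g(x')\ge 0$ for all $g\in\T_2$; the converse is immediate from nonnegativity of products. Hence
\begin{equation*}
  \characters(\A,\S) = \set[\big]{\chi_{(x,x')}}{(x,x')\in \K(\A_1,\T_1)\times\K(\A_2,\T_2)}.
\end{equation*}

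With both ingredients in place, the hypothesis $p(x,x')>0$ on $\K(\A_1,\T_1)\times\K(\A_2,\T_2)$ translates to $\varphi(p)>0$ for every $\varphi\in\characters(\A,\S)$, and Theorem~\ref{archpos} immediately yields $p\in\S=\T_1\otimes\T_2$, which by the definition (2.7) is precisely the claimed representation $p(x,x')=\sum_{i=1}^r p_i(x)q_i(x')$ with $p_i\in\T_1$, $q_i\in\T_2$. The only nontrivial step is the Archimedean verification, and its subtle point is that even though $\T_1\otimes\T_2$ typically fails to be a quadratic module of $\A$, one does not need this — the semiring form of the Archimedean Positivstellensatz (which the paper has just reduced to the quadratic-module case via the $\dagger$-construction in Theorem~\ref{archpre}) is exactly tailored to handle it.
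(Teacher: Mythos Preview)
Your proof is correct and follows essentially the same route as the paper: verify that $\S=\T_1\otimes\T_2$ is an Archimedean semiring by first showing each $\T_i$ is Archimedean, then lift to the tensor product via Lemma~\ref{archhilfs}, identify $\characters(\A,\S)$ with the product of the two semialgebraic sets, and invoke Theorem~\ref{archpos}. The only cosmetic difference is that you derive the Archimedean property of $\T_i$ from Schm\"udgen's Positivstellensatz (Proposition~\ref{archpreorder}), whereas the paper cites the underlying fact (compactness implies Archimedean for finitely generated preorderings) directly from \cite{wm}/\cite[Proposition~12.22]{sch17}, noting that this is actually the key step \emph{inside} the proof of Proposition~\ref{archpreorder}.
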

\begin{proof}
  Obviously,\, $\S\coloneqq\T_1\otimes \T_2$\, is a semiring.
  Since the semialgebraic sets $\K(\A_1,\T_1)$ and $\K(\A_2,\T_2)$ are compact, the preorderings 
  $\T_1$ of $\A_1$ and $\T_2$ of $\A_2$ are Archimedean  (\cite{wm}, see  e.g.~\cite[Proposition 12.22]{sch17}).
  (This fact is the crucial step in the proof of the  Positivstellensatz in  \cite{sch91}.) Therefore, the elements
  $f\otimes \Unit$ and $\Unit\otimes g$ of $\A$, where $f\in \A_1$ and $g\in \A_2$, satisfy the Archimedean condition.
  Since these elements generate the algebra $\A=\A_1\otimes \A_2$,  Lemma \ref{archhilfs}
  implies that the semiring $\S$ is Archimedean. Clearly, $\characters(\A,\S)$ is the set of point evaluations at 
  $\K(\A_1,\T_1)\times\K(\A_2,\T_2)$. Now the assertion follows from Theorem \ref{archpos}, applied to the Archimedean semiring $\S$. 
\end{proof}

We close this section by associating a $C^*$-seminorm and a real $C^*$-algebra with an Archimedean semiring $\S$.
Recall that  we  defined\, $a \preceq b$\, if and only if\, $b-a\in \S$\, for $a,b\in \A$. Since $\S$ is Archimedean,
for any $a\in \A$ there is a $\lambda >0$ such that $-\lambda\Unit \preceq a \preceq \lambda\Unit$. Hence, for $a\in \A$,
we can define 
\begin{equation}\label{norminf}
  \norm{a}_\infty
  \coloneqq
  \inf\, \set[\big]{ \lambda \in (0,\infty) }{ {-\lambda}\Unit \preceq a \preceq \lambda\Unit }.
\end{equation}
Further, we set 
\begin{equation}\label{normdagger}
  \norm{a}_{\S^\dagger}
  \coloneqq
  \inf\, \set[\big]{ \lambda \in (0,\infty) }{ (\lambda^2\Unit - a^2)\in \S^\dagger } .
\end{equation}
\begin{lemma}\label{infcstarnorm}
~ $\norm{a}_\infty=\norm{a}_{\S^\dagger}$ for $a\in \A$.
\end{lemma}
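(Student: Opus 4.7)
The plan is to prove the two inequalities $\|a\|_{\S^\dagger} \leq \|a\|_\infty$ and $\|a\|_\infty \leq \|a\|_{\S^\dagger}$ separately, with both arguments taking place at the level of arbitrary upper bounds approximating the respective infima.

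For the first inequality I would pick any $\mu > \|a\|_\infty$ and use that $\mu \Unit \pm a \in \S$. Applying Lemma \ref{archhilfs} with $b = a$ and $c = d = \mu \Unit$ yields $-\mu^2 \Unit \preceq a^2 \preceq \mu^2 \Unit$, so in particular $\mu^2 \Unit - a^2 \in \S \subseteq \S^\dagger$. Hence $\|a\|_{\S^\dagger} \leq \mu$, and letting $\mu$ decrease to $\|a\|_\infty$ gives $\|a\|_{\S^\dagger} \leq \|a\|_\infty$.

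The reverse inequality is the main point, and the key observation is the identity
\begin{equation*}
  2\mu(\mu \Unit \mp a) = (\mu \Unit \mp a)^2 + (\mu^2 \Unit - a^2),
\end{equation*}
valid for any $\mu > 0$. Fix $\lambda > \|a\|_{\S^\dagger}$, so $\lambda^2 \Unit - a^2 \in \S^\dagger$, and let $\mu > \lambda$. Then $\mu^2 \Unit - a^2 = (\mu^2 - \lambda^2)\Unit + (\lambda^2 \Unit - a^2) \in \S^\dagger$. By Theorem~\ref{archpre}, $\S^\dagger$ is a preordering, hence it contains every square; in particular $(\mu \Unit \mp a)^2 \in \S^\dagger$. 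Since $\S^\dagger$ is a convex cone, the identity above forces $\mu \Unit \mp a \in \S^\dagger$.

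To finish, I would invoke that $\S$ is Archimedean and hence $\S^\dagger$ admits the simplified description \eqref{dagger1}: an element lies in $\S^\dagger$ if and only if adding any $\epsilon \Unit$ with $\epsilon > 0$ puts it in $\S$. Therefore $(\mu + \epsilon)\Unit \mp a \in \S$ for every $\epsilon > 0$, so $\|a\|_\infty \leq \mu + \epsilon$. Letting $\mu \searrow \lambda$ and $\epsilon \searrow 0$ gives $\|a\|_\infty \leq \lambda$, and then $\lambda \searrow \|a\|_{\S^\dagger}$ yields $\|a\|_\infty \leq \|a\|_{\S^\dagger}$. The only nontrivial step is spotting the algebraic identity; once it is in hand, each factor lies in $\S^\dagger$ for essentially formal reasons (one by hypothesis, the other because $\S^\dagger$ contains all squares), and the Archimedean simplification of the dagger operation transfers the conclusion back from $\S^\dagger$ to $\S$.
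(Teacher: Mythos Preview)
Your proof is correct and follows essentially the same route as the paper's, but it is more self-contained. Where the paper invokes an external result (\cite[Lemma~10.4]{sch20}) giving the equivalence $(\lambda^2\Unit - a^2)\in \S^\dagger \iff (\lambda\Unit \pm a)\in \S^\dagger$ for quadratic modules, you instead prove each direction explicitly: for $\norm{a}_{\S^\dagger}\le \norm{a}_\infty$ you use Lemma~\ref{archhilfs} directly in $\S$ (in fact obtaining the slightly stronger $\mu^2\Unit - a^2 \in \S$), and for the reverse direction your identity $2\mu(\mu\Unit \mp a) = (\mu\Unit \mp a)^2 + (\mu^2\Unit - a^2)$ is precisely the algebraic content behind one implication of that cited lemma. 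Both arguments rely on Theorem~\ref{archpre} to know that $\S^\dagger$ contains all squares, and both finish by unwinding the Archimedean description \eqref{dagger1} of $\S^\dagger$. Your version has the advantage of avoiding an external reference; the paper's version is marginally shorter by outsourcing the algebra.
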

\begin{proof}
  Since $\S^\dagger$ is a quadratic module (Theorem~\ref{archpre}),
  by \cite[Lemma~10.4]{sch20} we have $(\lambda^2-a^2)\in \S^\dagger$
  if and only if $(\lambda \Unit +a)\in \S^\dagger$ and $(\lambda \Unit- a)\in \S^\dagger$. Therefore, since 
  $\S\subseteq \S^\dagger$ and by comparing \eqref{norminf} and \eqref{normdagger}, it follows that $\norm{a}_{\S^\dagger} \leq \norm{a}_\infty$.

  To prove the converse inequality, let $\epsilon >0$. Then, by definition, 
  $\big((\norm{a}_{\S^\dagger} +\epsilon)^2\Unit - a^2 \big)\in \S^\dagger$ and therefore 
  $\big((\norm{a}_{\S^\dagger} +\epsilon)\Unit \pm\, a \big)\in \S^\dagger$ again by \cite[Lemma~10.4]{sch20},
  so that $\big((\norm{a}_{\S^\dagger} +\epsilon)\Unit \pm\, a +\epsilon \Unit \big)\in \S$ by the definition of $\S^\dagger$.
  Hence $\norm{a}_\infty \leq \norm{a}_{\S^\dagger} +2 \epsilon$ by \eqref{norminf}. Since this holds for all $\epsilon>0$,
  we obtain $\norm{a}_\infty \leq \norm{a}_{\S^\dagger}$. Putting both paragraphs together we get  $\norm{a}_\infty = \norm{a}_{\S^\dagger}$.
\end{proof}

\begin{proposition}
  For each Archimedean semiring, $\norm{a}_\infty$ is a $C^*$-seminorm on $\A$ and
  \begin{align}\label{realc}
    \norm{a}^2_\infty \leq \norm{a^2+b^2}_\infty~ \textup{ for all }~a,b\in \A.
  \end{align} 
\end{proposition}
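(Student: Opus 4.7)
The plan is to verify homogeneity, subadditivity, submultiplicativity, the $C^*$-identity, and finally the Jordan-type inequality, making use of both characterizations of $\norm{\argument}_\infty$ provided by Lemma~\ref{infcstarnorm}. Finiteness on all of $\A$ is immediate from the Archimedean property of $\S$, and $\norm{0}_\infty = 0$ is clear.

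For the basic seminorm properties I would work directly from definition \eqref{norminf}. Absolute homogeneity follows by rescaling the bound $\lambda$. Subadditivity is immediate: if $-\lambda \Unit \preceq a \preceq \lambda \Unit$ and $-\mu \Unit \preceq b \preceq \mu \Unit$, then $-(\lambda+\mu)\Unit \preceq a+b \preceq (\lambda+\mu)\Unit$ because $\S$ is closed under addition. Submultiplicativity $\norm{ab}_\infty \leq \norm{a}_\infty \norm{b}_\infty$ is exactly the first statement of Lemma~\ref{archhilfs} applied to $c = \lambda \Unit, d = \mu \Unit$, followed by taking infima.

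The $C^*$-identity $\norm{a^2}_\infty = \norm{a}_\infty^2$ is the crux, and here I would pass to the $\S^\dagger$-form $\norm{\argument}_{\S^\dagger}$ supplied by Lemma~\ref{infcstarnorm}. Since $\S^\dagger$ is an Archimedean quadratic module by Theorem~\ref{archpre}, the cited identity \cite[Lemma 10.4]{sch20} used in the proof of Lemma~\ref{infcstarnorm} says that $(\lambda^2 \Unit - c^2) \in \S^\dagger$ iff $(\lambda\Unit \pm c) \in \S^\dagger$ for $\lambda \in (0,\infty)$. Applied with $c = a^2$ and using that $a^2 \in \S^\dagger$ makes $\lambda \Unit + a^2 \in \S^\dagger$ automatic, this yields
\begin{equation*}
  \norm{a^2}_{\S^\dagger}
  =
  \inf\set[\big]{\lambda > 0}{\lambda \Unit - a^2 \in \S^\dagger}.
\end{equation*}
Applied instead with $c = a$ and the substitution $\mu = \lambda^2$, the same reasoning gives
\begin{equation*}
  \norm{a}_{\S^\dagger}^2
  =
  \inf\set[\big]{\mu > 0}{\mu \Unit - a^2 \in \S^\dagger},
\end{equation*}
so the two expressions coincide and $\norm{a^2}_\infty = \norm{a}_\infty^2$.

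For the final inequality \eqref{realc} I would combine the $C^*$-identity with a monotonicity step. By the identity just proved it suffices to show $\norm{a^2}_\infty \leq \norm{a^2+b^2}_\infty$. Since $a^2 + b^2 \in \S^\dagger$ (Theorem~\ref{archpre} and convexity of $\S^\dagger$), the same reduction as above represents
\begin{equation*}
  \norm{a^2+b^2}_{\S^\dagger}
  =
  \inf\set[\big]{\lambda > 0}{\lambda \Unit - (a^2+b^2) \in \S^\dagger}.
\end{equation*}
For any such $\lambda$, adding the element $b^2 \in \S^\dagger$ shows $\lambda \Unit - a^2 \in \S^\dagger$, whence $\norm{a^2}_{\S^\dagger} \leq \lambda$. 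Taking the infimum over $\lambda$ proves \eqref{realc}. The only real obstacle is the $C^*$-identity, and this is handled entirely by \cite[Lemma 10.4]{sch20} applied to $\S^\dagger$; everything else is bookkeeping about which ``positive'' sign is automatic because the element in question already lies in $\S^\dagger$.
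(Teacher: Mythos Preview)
Your argument is correct and rests on the same reduction as the paper: pass to $\S^\dagger$ via Lemma~\ref{infcstarnorm}, use that $\S^\dagger$ is an Archimedean quadratic module (Theorem~\ref{archpre}), and then invoke the quadratic-module theory from \cite{sch20}. The difference is only in how much is cited versus unpacked. The paper's proof is a two-line appeal to Cimpri\v{c}'s theorem \cite{cimpric09} (equivalently \cite[Theorem~10.5]{sch20}), which already asserts that $\norm{\argument}_{\S^\dagger}$ is a $C^*$-seminorm satisfying \eqref{realc} for any Archimedean quadratic module; you instead re-derive the $C^*$-identity and \eqref{realc} by hand from the more primitive \cite[Lemma~10.4]{sch20}, exploiting that $a^2$, $b^2$, and $a^2+b^2$ already lie in $\S^\dagger$ so that the ``$+$'' conditions are automatic. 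Your route is slightly more self-contained and makes the mechanism visible; the paper's is shorter but hides the work inside the citation. Substantively they coincide.
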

\begin{proof}
  By Proposition \ref{archpre}, $\S^\dagger$ is an Archimedean quadratic module. Therefore, as shown by J.~Cimpric \cite{cimpric09}
  (see also \cite[Theorem 10.5]{sch20}), $\norm{\argument}_{\S^\dagger}$ is a  $C^*$-seminorm satisfying \eqref{realc}. Because of 
  Lemma~\ref{infcstarnorm} this holds also for  $\norm{\argument}_\infty$.
\end{proof}

Since $\norm{\argument}_\infty$ is a $C^*$-seminorm,  $\J\coloneqq \set{ a\in \A }{ \norm{a}_\infty = 0 }$ is an ideal and the quotient
algebra $\A/ \J$ carries the $C^*$-norm $\norm{\argument}_\S$ defined by ${\norm{a+\J}_\S} \coloneqq \norm{a}_\infty$, $a\in \A$. Because
\eqref{realc} holds, the completion of $(\A /\J,\norm{\argument}_\S)$ is a real $C^*$-algebra $\A_\S$. 
Thus, we have associated a {\it real $C^*$-algebra} $\A_\S$ with the Archimedean semiring $\S$. Hence the representation theory of
algebras with Archimedean quadratic modules developed in \cite{cimpric09} and  \cite[Chapter 10]{sch20} remains valid for
Archimedean semirings as well.

\section{Cofinal Elements and $c$-Localizable Semirings}\label{cofinal}

In this section we introduce two important concepts, cofinality and localizability of semirings.

\begin{definition}
  Let $\S$ be a semiring of $\A$. An element $s\in \Unit + \S$ is said to be \emph{cofinal in $\A$}
  if for all $a\in \A$ there exist numbers $\lambda \in (0,\infty)$ and $k\in \NN_0$ such that $\lambda s^k - a \in \S$.
\end{definition}
Clearly, if a semiring $\S$ of $\A$ contains a cofinal element $s\in \Unit + \S$, then $\S$ is generating.
Moreover, $\S$ is Archimedean if and only if the unit element $\Unit$ is cofinal in $\S$.

\begin{example}
  Let $\A=\RR[x_1,\dots,x_d]$ and let $\T$ be the preordering \eqref{preorderingf} generated by finitely many elements $f_1,\dots,f_r\in\A$, $r\in \NN$.
  Then $\T - \T = \A$ and, as noted in \cite[Corollary ~1.4]{M2}, an element $s \in \Unit + \T$ is cofinal in $\A$ if and only if there exist numbers 
  $\lambda \in (0,\infty)$ and $k\in \NN$ such that $\abs{t_i} \le \lambda s^k(t)$ for all $i\in \{1,\dots,d\}$ and all $t \in \RR^d$ satisfying 
  $f_j(t) \ge 0$ for all $j\in \{1,\dots,r\}$.
\end{example}

The next proposition shows that in the finitely generated case there always exists a cofinal element for any generating semiring.

\begin{proposition}\label{semiringgen}
  Let $\S$ be  a semiring of $\A$.
  Suppose the algebra $\A$ has finitely many generators $y_1,\dots, y_n$ and there exist elements $a_1,\dots, a_n \in \S$
  such that $a_j - y_j \in \S$ and $a_j + y_j \in \S$ for all $j\in \{1,\dots,n\}$. 
  Then $\S$ is generating and the element $s \coloneqq \Unit + \sum_{j=1}^n a_j$ is cofinal in $\A$.
\end{proposition}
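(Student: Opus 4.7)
The plan is first to dispose of the ``generating'' claim and then to prove cofinality of $s$ by induction on the degree of a representation in the generators $y_1,\dots,y_n$, using the two key identities from the proof of Lemma~\ref{archhilfs}.

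\textbf{Generating.} From $a_j, a_j - y_j \in \S$ we read off $y_j = a_j - (a_j - y_j) \in \S - \S$. Together with $\Unit \in \S \subseteq \S - \S$ and the fact (noted in the text before the definition of \emph{generating}) that $\S - \S$ is a unital subalgebra of $\A$, we obtain $\A = \S - \S$, since $y_1,\dots,y_n$ generate $\A$ as an algebra. Also $s = \Unit + \sum_{j=1}^n a_j \in \Unit + \S$ because $\S$ is closed under addition.

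\textbf{Setup for cofinality.} The crucial preliminary observation is that
\begin{equation*}
  s - \Unit = \sum_{j=1}^n a_j \in \S,
  \qquad
  s + \Unit = 2\,\Unit + \sum_{j=1}^n a_j \in \S,
\end{equation*}
so $-s \preceq \Unit \preceq s$; and for each $j$,
\begin{equation*}
  s - y_j = \Unit + \sum\nolimits_{i\neq j} a_i + (a_j - y_j) \in \S,
  \qquad
  s + y_j = \Unit + \sum\nolimits_{i \neq j} a_i + (a_j + y_j) \in \S,
\end{equation*}
so $-s \preceq y_j \preceq s$. In particular, the first assertion of Lemma~\ref{archhilfs} (applied with $c = d = s$) gives $-s^2 \preceq y_i y_j \preceq s^2$ and $-s^2 \preceq y_j \preceq s^2$ and $-s^2 \preceq \Unit \preceq s^2$, etc.

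\textbf{Induction on monomial degree.} I claim that for every $k \in \NN_0$ and every monomial $y_{i_1}\cdots y_{i_m}$ in the $y_j$'s of length $m \le k$, one has $s^k \pm y_{i_1}\cdots y_{i_m} \in \S$. Equivalently, writing $y_{i_1}\cdots y_{i_m} = \Unit \cdots \Unit \cdot y_{i_1}\cdots y_{i_m}$ with $k - m$ factors of $\Unit$, each of the $k$ factors $z_\ell$ (either $\Unit$ or some $y_{j}$) satisfies $-s \preceq z_\ell \preceq s$. An easy induction on $k$ using the identities
\begin{equation*}
  cd + ab = \frac{(c+a)(d+b) + (c-a)(d-b)}{2},
  \qquad
  cd - ab = \frac{(c+a)(d-b) + (c-a)(d+b)}{2}
\end{equation*}
of Lemma~\ref{archhilfs} (applied repeatedly with $c,d$ equal to appropriate powers of $s$) then gives $s^k \pm z_1 \cdots z_k \in \S$, as desired.

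\textbf{Conclusion.} Any $a \in \A$ can be expanded as a polynomial $a = \sum_{|\alpha| \le k} c_\alpha\, y^\alpha$ in the generators, with coefficients $c_\alpha \in \RR$ and total degree $k \in \NN_0$. Setting $\lambda \coloneqq \sum_\alpha \abs{c_\alpha}$ and using the previous step, we obtain
\begin{equation*}
  \lambda\, s^k - a
  = \sum_{|\alpha| \le k} \abs{c_\alpha}\bigl(s^k - \mathrm{sgn}(c_\alpha)\,y^\alpha\bigr)
  \in \S,
\end{equation*}
since each summand lies in $\S$ and $\S$ is closed under nonnegative scalar multiples and addition. This verifies the cofinality condition for $s$.

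\textbf{Expected obstacle.} No deep obstacle: the only mildly delicate point is keeping track of degrees in the inductive step, which is why I pad each monomial with $\Unit$'s so that every product has exactly $k$ factors and a uniform bound $s^k$ can be obtained from the Lemma~\ref{archhilfs} identities.
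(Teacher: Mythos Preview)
Your proof is correct and follows essentially the same approach as the paper: establish $-s \preceq y_j \preceq s$ (and $-s \preceq \Unit \preceq s$), invoke Lemma~\ref{archhilfs} inductively to bound monomials in the generators by powers of $s$, and sum up. The only cosmetic difference is that you pad each monomial with copies of $\Unit$ to reach length exactly $k$, whereas the paper bounds a degree-$\ell$ monomial by $s^\ell$ and then uses $s^\ell \preceq s^k$ (from $\Unit \preceq s$) to align degrees; these are equivalent bookkeeping devices. One trivial point: your $\lambda = \sum_\alpha \abs{c_\alpha}$ could be $0$ when $a=0$, but cofinality requires $\lambda \in (0,\infty)$; this is harmless since one can always replace $\lambda$ by $\lambda+1$.
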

\begin{proof}
  Obviously, $\S-\S$ is a subalgebra of $\A$. As 
  $y_j=\frac{1}{2}\big((a_j+y_j) - (a_j-y_j)\big)\in \S - \S$ holds for the generators $y_1,\dots,y_k$ of $\A$
  it follows that $\S-\S = \A$, so $\S$ is generating.
  
  We write $\preceq$ for the partial order $\preceq_\S$ from \eqref{preceq}.
  Let $a\in \A$. Then $a$ is a polynomial in the generators $y_j$, that is, $a=p(y_1,\dots,y_n)$ for some polynomial $p\in \RR[x_1,\dots,x_n]$.
  We express $p$ as a sum $p = \sum_{\ell = 0}^k p_\ell$ of homogeneous polynomials $p_\ell \in \RR[x_1,\dots,x_n]$ of degree $\ell$.
  By definition of $s$ we have $-s\preceq y_j\preceq s$ and $-s^0=-\Unit\preceq \Unit\preceq s^0=\Unit$. 
  Therefore,  Lemma~\ref{archhilfs} implies that $-s^\ell \preceq \prod_{j=1}^\ell y_{i_j} \preceq s^\ell$
  for all $\ell \in \{0,\dots,k\}$ and all $i_1, \dots, i_\ell \in \{1,\dots,n\}$. Hence
  there exist numbers $\lambda_0, \dots, \lambda_k \in (0,\infty)$ such that $-\lambda_\ell s^\ell \preceq p_\ell(y_1,\dots,y_n) \preceq \lambda_\ell s^\ell$
  holds for $\ell \in \{0,\dots,k\}$ (one may take for $\lambda_\ell$ the sum of absolute values of the coefficients of the polynomial $p_\ell$).
  Moreover, from $\Unit \preceq s$ it follows that $s^\ell \preceq s^{\ell+1}$ for all $\ell \in \NN_0$,
  and therefore $\sum_{\ell = 0}^k \lambda_\ell s^\ell \preceq \sigma s^k$ with $\sigma \coloneqq \sum_{\ell=0}^k \lambda_\ell$.
  Thus,  $-\sigma s^k \preceq p(y_1,\dots,y_n) \preceq \sigma s^k$. This proves that $s$ is cofinal in $\S$.
\end{proof}

In the case when $\S$ admits  a cofinal element, the cone $\S^\dagger$  can be nicely described by the following formula \eqref{sdd},
which generalizes formula \eqref{dagger1} from the Archimedean case.
\begin{proposition}\label{propsddagger}
  If $s\in \Unit + \S$ is a cofinal element of the  semiring $\S$, then
  \begin{equation}\label{sdd}
    \S^\dagger = \set[\big]{a\in \A}{\textup{there exists $n\in \NN$ such that }a+\epsilon s^n \in \S \textup{ for all }\epsilon \in {(0,\infty)}}
    .
  \end{equation}
\end{proposition}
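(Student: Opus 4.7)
The proof splits naturally into the two inclusions, and the cofinality hypothesis is precisely what is needed to carry the second one.

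For the inclusion $\supseteq$, I would note that because $s \in \Unit + \S$, the element $s$ itself lies in $\S$ (as $\Unit \in \S$ and $\S$ is closed under addition), so every power $s^n$ lies in $\S$ since semirings are closed under multiplication (with the convention $s^0 = \Unit$). Thus any $a\in \A$ for which $a + \epsilon s^n \in \S$ for all $\epsilon \in (0,\infty)$ satisfies the defining condition of $\S^\dagger$ in \eqref{eq:ddagger} by choosing the witness $c \coloneqq s^n \in \S$.

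For the inclusion $\subseteq$, which is the substantive direction, let $a \in \S^\dagger$ be given. By \eqref{eq:ddagger} there exists some $c \in \S$ such that $a + \delta c \in \S$ for all $\delta \in (0,\infty)$. The task is to replace this particular witness $c$ by a power of $s$. This is where cofinality enters: by the definition of a cofinal element applied to $c \in \A$, there exist $\lambda \in (0,\infty)$ and $n\in \NN_0$ such that $\lambda s^n - c \in \S$. Now for any $\epsilon \in (0,\infty)$, setting $\delta \coloneqq \epsilon / \lambda > 0$, I would use the telescoping identity
\begin{equation*}
  a + \epsilon s^n
  =
  (a + \delta c) + \delta\,(\lambda s^n - c),
\end{equation*}
where both summands lie in $\S$ (the first by the defining property of the witness $c$, the second as a nonnegative scalar multiple of an element of $\S$). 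Since $\S$ is closed under addition, $a + \epsilon s^n \in \S$, establishing that $a$ lies in the right-hand side of \eqref{sdd}.

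There is essentially no serious obstacle; the only conceptual point is recognising that the cofinality of $s$ is exactly designed to dominate an arbitrary witness $c$ by a power $s^n$, and that the defining condition of $\S^\dagger$ is preserved under such domination via the convex-combination decomposition displayed above. I would present the argument in a handful of lines, introducing $\delta = \epsilon/\lambda$ explicitly so that the additive identity needs no further comment.
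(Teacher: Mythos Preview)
Your proof is correct and follows essentially the same approach as the paper: both directions are argued identically, with the key step being the decomposition $a+\epsilon s^n = (a+\delta c) + \delta(\lambda s^n - c)$ where $\delta = \epsilon/\lambda$. The only cosmetic difference is that the paper treats the inclusion $\supseteq$ as ``clear'' without spelling out that $s^n\in\S$, whereas you make this explicit.
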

\begin{proof}
  The inclusion ``$\supseteq$'' is clear. Conversely, let $a\in \S^\dagger$. Then there exists $b\in \S$ such that $a+\epsilon b \in \S$
  for all $\epsilon \in {(0,\infty)}$, and there exist numbers $\lambda \in (0,\infty)$ and $k\in \NN$ such that $\lambda s^k - b \in \S$.
  Consequently we have $a+\epsilon s^k = a+ (\epsilon/\lambda) b + (\epsilon/\lambda)( \lambda s^k - b ) \in \S$.
\end{proof}

\begin{definition}
  Let $\C$ be a convex cone of $\A$ and $c\in \C$. Then $\C$ is called \emph{$c$-localizable} if the following holds:
  Whenever $ca \in \C$ for some $a\in \A$, then $a\in \C$.
\end{definition}

\begin{proposition} \label{proposition:localization}
  Let $\S$ be a semiring of $\A$ and $s \in \S$, then
  \begin{equation*}
    \S_{s\mloc} 
    \coloneqq
    \set[\big]{a\in \A} {\textup{ there exists }k\in \NN_0\textup{ such that }s^k a \in \S}
  \end{equation*}
  is the smallest (with respect to inclusion $\subseteq$) $s$-localizable semiring of $\A$ that contains $\S$ as a subset.
\end{proposition}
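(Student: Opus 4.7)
The plan is to verify four assertions: that $\S_{s\mloc}$ is a semiring, that it contains $\S$, that it is $s$-localizable, and finally that it is contained in any other $s$-localizable semiring containing $\S$.

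First I would check the semiring axioms. The unit lies in $\S_{s\mloc}$ because $s^0\Unit=\Unit\in\S$. For two elements $a,b\in\S_{s\mloc}$, choose $j,k\in\NN_0$ with $s^j a,\,s^k b\in\S$; setting $m\coloneqq\max\{j,k\}$ one has $s^m(a+b)=s^{m-j}(s^ja)+s^{m-k}(s^kb)\in\S$ since $s\in\S$ and $\S$ is closed under addition and multiplication, and $s^{j+k}(ab)=(s^ja)(s^kb)\in\S$. Closure under nonnegative scalar multiplication is immediate. Inclusion $\S\subseteq\S_{s\mloc}$ follows by taking $k=0$.

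Next I would check $s$-localizability. Suppose $sa\in\S_{s\mloc}$ for some $a\in\A$; then $s^k(sa)=s^{k+1}a\in\S$ for some $k\in\NN_0$, which exhibits $a$ as an element of $\S_{s\mloc}$.

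Finally, the minimality statement. Let $\T$ be any $s$-localizable semiring of $\A$ with $\S\subseteq\T$, and let $a\in\S_{s\mloc}$, so $s^k a\in\S\subseteq\T$ for some $k\in\NN_0$. Write $s^ka=s\cdot(s^{k-1}a)\in\T$ when $k\geq 1$; by $s$-localizability of $\T$ this gives $s^{k-1}a\in\T$. Iterating $k$ times yields $a\in\T$, so $\S_{s\mloc}\subseteq\T$.

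I do not anticipate any significant obstacle; every step is a direct unwinding of the definitions. The only point to be slightly careful about is the inductive descent in the minimality argument, where one uses $s$-localizability of $\T$ repeatedly to strip off factors of $s$ one at a time.
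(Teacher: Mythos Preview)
Your proof is correct and follows essentially the same approach as the paper's own proof; the only cosmetic differences are that the paper uses the exponent $j+k$ rather than $\max\{j,k\}$ when verifying closure under addition, and it compresses your inductive descent in the minimality step into a single sentence.
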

\begin{proof}
  It is clear that $\S \subseteq \S_{s\mloc}$.
  To prove the converse inclusion, let $a,b\in \S_{s\mloc}$ and $\lambda, \mu \in {(0,\infty)}$.
  There are $k,\ell \in \NN_0$ such that $s^k a \in \S$ and $s^\ell b \in \S$.
  Then $s^{k+\ell}(\lambda a + \mu b) = \lambda s^{\ell} (s^k a) + \mu s^{k} (s^\ell b) \in \S$, and also $s^{k+\ell} a b = (s^k a)(s^\ell b) \in \S$,
  so $\S_{s\mloc}$ is a semiring of $\A$ that contains $\S$ as a subset. Moreover, $\S_{s\mloc}$ is $s$-localizable, because for any
  $a\in \A$ that fulfills $sa \in \S_{s\mloc}$ there exists $k\in \NN_0$ such that $s^{k+1} a = s^k(ca) \in \S$, and then also $a \in \S_{s\mloc}$.
  
  Conversely, if $\mathcal{T}$ is a $s$-localizable semiring of $\A$ satisfying $\S \subseteq \mathcal{T}$, then $\S_{s\mloc} \subseteq \mathcal{T}$,
  because for every $a \in \S_{s\mloc}$ there exists $k \in \NN_0$ such that $s^k a \in \S \subseteq \T$, and therefore $a\in \T$.
\end{proof}

An easy example of a $s$-localizable cone is the following.
\begin{example}
  Let $\A=\RR[x_1,\dots,x_n]$ and let $\T\coloneqq \set{ p\in \A }{ p(\xi)\geq 0 ~\textup{ for }~ \xi\in \RR^n}$.
  Then $\T$ is a preordering and $\T$ is $s$-localizable for every polynomial $s\in \T \setminus\{0\}$.
\end{example}
More interesting examples follow from Proposition \ref{proposition:localizableZ} below.

A convex cone $\C$ of a real vector space $V$ is called \emph{simplicial} if 
$\C$ is the convex cone generated by a finite linearly independent subset of $V$.
\begin{definition}
  A convex cone $\C$ of $\A$ is called \emph{filtered simplicial} if there exists a sequence $(\C_k)_{k\in \NN_0}$
  of simplicial convex cones of $\A$ such that $\C_k \subseteq \C_{k+1}$ for all $k\in \NN_0$ and $\bigcup_{k\in \NN_0} \C_k = \C$
  hold.
\end{definition}
An example of a filtered simplicial convex cone  is the convex cone of $\RR[x_1,\dots,x_n]$, $n\in \NN$,
of polynomials with non-negative coefficients (with the filtration by degree and the monomials as generators).

Filtered simplicial convex cones $\C$ of $\A$ are of interest for several reasons: Firstly, 
since $\C_k$ has a set of linearly independent generators, it is in principle  easy to decide whether an element belongs to $\C_k$ and so to $\C$,
and to compute $\inf \set{\lambda \in \RR}{a-\lambda\Unit \in \C_k}$ for $a,\Unit \in \C_k - \C_k$ by expressing $a$ and $\Unit$ in terms of
the basis of $\C_k - \C_k$ that is given by the linearly independent generators of $\C_k$. 
Secondly, they allow us to construct examples of localizable convex cones.

\begin{proposition} \label{proposition:localizableZ}
  Suppose $(\C_k)_{k\in \NN_0}$ is an increasing sequence of simplicial convex cones of $\A$ and
  $\C \coloneqq \bigcup_{k\in \NN_0} \C_k$ is the resulting filtered simplicial convex cone.
  Let $J$ be a non-empty subset of $\RR$.
  Let $\A[z] \cong \A \otimes \RR[z]$ denote the commutative unital $\RR$-algebra of polynomials in one variable $z$ with values in $\A$. From \eqref{eq:posJ} we recall
  \begin{equation}
    \label{eq:posj}
    \Pos(J) = \set[\big]{p \in \RR[z]}{p(t) \ge 0~\textup{ for all }~t\in J}
    .
  \end{equation}
  For every $k\in \NN_0$ let 
  \begin{equation*}
    \C_k \otimes \Pos(J)
    \coloneqq
    \set[\Big]{
      \sum\nolimits_{i=1}^\ell c_i p_i
    }{
      p_1, \dots, p_\ell \in \Pos(J)
    },
  \end{equation*}
  where $c_1, \dots,c_\ell \in \C_k$ are linearly independent generators of $\C_k$,
  and $\C \otimes \Pos(J) \coloneqq \bigcup_{k\in \NN_0} \C_k \otimes \Pos(J)$. Then, for any
  $q \in \RR[z]$ satisfying $q(t) > 0$ for all $t\in J$, the convex cone $\C \otimes \Pos(J)$ is $q$-localizable.
\end{proposition}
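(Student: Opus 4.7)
The plan is to reduce the $q$-localizability statement to unique-decomposition arguments that are enabled by the simplicial structure of the cones $\C_k$. Suppose $a \in \A[z]$ satisfies $qa \in \C \otimes \Pos(J)$. By definition of the filtered union, there is some $k\in \NN_0$ with $qa \in \C_k \otimes \Pos(J)$, so we may write
\begin{equation*}
  qa = \sum\nolimits_{i=1}^\ell c_i p_i
\end{equation*}
for linearly independent generators $c_1,\dots,c_\ell$ of $\C_k$ and polynomials $p_1,\dots,p_\ell \in \Pos(J)$. The aim is to produce a representation of $a$ itself in $\C_k \otimes \Pos(J)$.

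To do so, I would extend $\{c_1,\dots,c_\ell\}$ to a Hamel basis $\{c_i\}_{i\in I}$ of $\A$ over $\RR$. Viewing $\A[z] \cong \A \otimes \RR[z]$, the element $a$ then admits a unique expansion
\begin{equation*}
  a = \sum\nolimits_{i\in I} c_i r_i, \qquad r_i \in \RR[z],
\end{equation*}
with only finitely many non-zero $r_i$. Since scalar multiplication by $q$ acts on each coordinate, we get $qa = \sum_{i\in I} c_i (q r_i)$, which by uniqueness of the expansion forces $qr_i = p_i$ for $i\in \{1,\dots,\ell\}$ and $qr_i = 0$ for $i\in I\setminus\{1,\dots,\ell\}$. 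Because $J\neq\emptyset$ and $q$ is strictly positive on $J$, the polynomial $q$ is non-zero, hence $r_i = 0$ for $i\notin \{1,\dots,\ell\}$, giving $a = \sum_{i=1}^\ell c_i r_i$.

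It remains to check that each $r_i$ lies in $\Pos(J)$. This is the one step where the hypothesis ``$q(t)>0$ for all $t\in J$'' (as opposed to merely $q\in \Pos(J)$) is used: for $t\in J$ we have $q(t) r_i(t) = p_i(t) \ge 0$ and $q(t) > 0$, hence $r_i(t) \ge 0$. Thus $r_i \in \Pos(J)$ and $a \in \C_k \otimes \Pos(J) \subseteq \C \otimes \Pos(J)$. The main (minor) subtlety is justifying the uniqueness of the decomposition of $a$, which is what the linear independence of the generators $c_i$ — built into the definition of ``simplicial'' — is designed to provide; once that is in hand the argument is essentially polynomial division in $\RR[z]$ applied coordinatewise.
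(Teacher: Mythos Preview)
Your proof is correct, and in fact it is cleaner than the paper's own argument. Both proofs reach the same two-step structure --- first show that $q$ divides each coefficient $p_i$ in $\RR[z]$ so that $a=\sum_{i=1}^\ell c_i r_i$ with $qr_i=p_i$, then conclude $r_i\in\Pos(J)$ via $r_i(t)=p_i(t)/q(t)$ --- but they establish the first step differently. The paper factors $q$ over $\RR$ into linear and irreducible quadratic pieces, evaluates $qa=\sum c_i p_i$ at each (possibly complex) root, and uses linear independence of the $c_i$ in the complexification $\A_\CC$ to deduce that every root of $q$ is a root of each $p_i$; iterating over the factors gives $q\mid p_i$. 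You instead observe that extending $c_1,\dots,c_\ell$ to a Hamel basis of $\A$ exhibits $\A[z]$ as a free $\RR[z]$-module, so multiplication by the nonzero element $q$ is injective and acts coordinatewise; uniqueness of the $\RR[z]$-expansion then forces $qr_i=p_i$ directly. Your route avoids complexification and the case analysis on irreducible factors, at the cost of invoking a basis extension; the paper's route stays with the finitely many generators $c_1,\dots,c_\ell$ throughout but works harder for the divisibility.
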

\begin{proof}
  The explicit formulas for $\C_k \otimes \Pos(J)$ and $\C \otimes \Pos(J)$ are easy to check.
  
  Let $q \in \RR[z]$ be given such that $q(t) > 0$ for all $t\in J$. Note that $q\neq 0$ because $J$ is non-empty.
  First consider $a \in \A[z]$ and assume that there are $p_1, \dots, p_\ell \in \RR[z]$ (not necessarily positive on $J$) 
  such that $q a = \sum_{i=1}^\ell c_i p_i$ with the generators $c_1, \dots,c_\ell \in \C_k$ for some $k\in \NN_0$.
  We will show that there are $r_1, \dots, r_\ell \in \RR[z]$ (not necessarily positive on $J$)
  such that $q r_i = p_i$ for $i\in \{1,\dots,\ell\}$ and $a = \sum_{i=1}^\ell c_i r_i$.
  As $q$ is a product of a scalar $\alpha \in \RR\setminus\{0\}$ and polynomials of the form $(z-\beta)^2+\gamma^2$ or $z-\delta$ with $\beta,\gamma,\delta \in \RR$,
  it suffices to discuss the two cases $q = (z-\beta)^2+\gamma^2$ and $q = z-\delta$:
  
  In the case $q = (z-\beta)^2+\gamma^2$ we consider the identity $q a = \sum_{i=1}^\ell c_i p_i$
  in the complexification $\A_\CC$ of the real algebra $\A$.
  Then we can set $z\coloneqq \beta \pm \I\gamma$ and obtain $0 = \sum_{i=1}^\ell c_i p_i(\beta\pm \I\gamma)$.
  Since the elements $c_1,\dots, c_\ell$ are also linearly independent in $\A_\CC$, we conclude that 
  $p_1(\beta\pm \I\gamma) = \dots = p_\ell(\beta\pm \I\gamma) = 0$ for both signs $\pm$. As $q = (z-(\beta + \I \gamma))(z-(\beta - \I \gamma))$
  there exist polynomials $r_i \in \RR[z]$ such that $q r_i = p_i$ for $i\in\{1,\dots,\ell\}$, so $q a = q \sum_{i=1}^\ell c_i r_i$.
  This means $a = \sum_{i=1}^\ell c_i r_i$ because $q \in \RR[z]\setminus\{0\} \subseteq \A[z]$ is not a zero-divisor of $\A[z]$.
  The case $q = z-\delta$ follows a slight modification of the preceding reasoning: Setting $z\coloneqq\delta$
  we obtain $p_i(\delta)=0$, so $p_i = (z-\delta) r_i$ for suitable $r_i \in \RR[z]$, $i\in \{1,\dots,\ell\}$.
  
  Now consider $a \in \A[z]$ such that $qa \in \C_k \otimes \Pos(J)$ for some $k\in \NN_0$. Then
  there are $p_1, \dots, p_\ell \in \RR[z]$ fulfilling $p_i(t) \ge 0$ for all $t\in J$
  such that $q a = \sum_{i=1}^\ell c_i p_i$. By the above considerations, there also are
  $r_1, \dots, r_\ell \in \RR[z]$ such that $q r_i = p_i$ for $i\in \{1,\dots,\ell\}$ and $a = \sum_{i=1}^\ell c_i r_i$.
  As $r_i(t) = p_i(t) / q(t) \ge 0$ for all $t\in J$, $i\in \{1,\dots,\ell\}$, it follows that $a \in \C_k \otimes \Pos(J)$.
\end{proof}
The following is an application of Proposition~\ref{proposition:localizableZ} that will be used later on.

\begin{corollary} \label{corollary:examplesimplex}
  Fix $\gamma \in {(0,\infty)}$ and let $\S$ be the semiring of $\RR[x_1,\dots,x_n]$ that is generated by
  the polynomials $\{x_1,\dots,x_n\} \cup \{\gamma-\sum_{j=1}^n x_j\}$. Let $J$ be a non-empty closed subset of $\RR$,
  and $q\in \RR[z]$ such that $q(t) > 0$ for all $t\in J$.
  Then $\S$ is a filtered simplicial and Archimedean semiring of $\RR[x_1,\dots,x_n]$
  and $\S \otimes \Pos(J)$ is a $q$-localizable semiring of
  $\RR[x_1,\dots,x_n,z]\cong \RR[x_1,\dots,x_n][z]$.
\end{corollary}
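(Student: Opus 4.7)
The plan is to introduce the change of variables $y_0 \coloneqq \gamma\Unit - \sum_{j=1}^n x_j$ and $y_j \coloneqq x_j$ for $j = 1,\dots,n$, so that $\S$ is precisely the semiring generated by $y_0, y_1, \dots, y_n$, and these generators satisfy the single relation $y_0 + y_1 + \dots + y_n = \gamma\Unit$. For each $k \in \NN_0$ I would define $B_k \coloneqq \{y_0^{a_0} y_1^{a_1} \cdots y_n^{a_n} : a\in \NN_0^{n+1},\ a_0+\cdots+a_n = k\}$ and let $\C_k$ be the convex cone generated by $B_k$. Multiplying any $y^a \in B_k$ by the unit $\Unit = \gamma^{-1}(y_0+\cdots+y_n)$ expresses it as a nonnegative combination of elements of $B_{k+1}$, so $\C_k \subseteq \C_{k+1}$; moreover $\bigcup_k \C_k = \S$ because every element of $\S$ is a nonnegative combination of products of the generators $y_0,\dots,y_n$, and any such product lies in some $B_\ell$ (and can be pushed up to any higher level by the same trick). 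The Archimedean property then follows at once from Lemma~\ref{archhilfs} applied to the algebra generators $x_1,\dots,x_n$: one has $x_i \in \S$ and $\gamma\Unit - x_i = y_0 + \sum_{j \neq i} y_j \in \S$ for each $i$, so also $\gamma\Unit + x_i \in \S$.

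The main obstacle is to show that $B_k$ is linearly independent in $\RR[x_1,\dots,x_n]$, so that $\C_k$ is simplicial. Since $\abs{B_k} = \binom{n+k}{n} = \dim\RR[x_1,\dots,x_n]_{\leq k}$ and each element of $B_k$ is a polynomial of $x$-degree at most $k$, it is enough to show that $\mathrm{span}(B_k) = \RR[x_1,\dots,x_n]_{\leq k}$. The relation $y_0 + \cdots + y_n = \gamma\Unit$ yields $\gamma\, y^a = \sum_{i=0}^n y^{a+e_i}$ for every $a \in \NN_0^{n+1}$, so $\mathrm{span}(B_\ell) \subseteq \mathrm{span}(B_{\ell+1})$ for every $\ell$, hence $\mathrm{span}(B_\ell)\subseteq \mathrm{span}(B_k)$ for all $\ell \le k$. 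Taking $a_0 = 0$ shows that every $x$-monomial $x_1^{b_1}\cdots x_n^{b_n}$ of total degree $\ell \le k$ lies in $B_\ell$, so $\mathrm{span}(B_k)$ contains all $x$-monomials of degree at most $k$ and therefore equals $\RR[x_1,\dots,x_n]_{\leq k}$; the dimension count then forces linear independence.

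Finally, Proposition~\ref{proposition:localizableZ} applied to the filtered simplicial convex cone $\C = \S$ and to the given polynomial $q$ (strictly positive on $J$ by hypothesis) yields that $\S \otimes \Pos(J)$ is a $q$-localizable convex cone of $\RR[x_1,\dots,x_n,z]\cong \RR[x_1,\dots,x_n]\otimes \RR[z]$. That it is in fact a semiring is immediate from the paper's definition~\eqref{tensorpr} of the tensor product of semirings, since $\S$ is a semiring of $\RR[x_1,\dots,x_n]$ and $\Pos(J)$ is a semiring of $\RR[z]$.
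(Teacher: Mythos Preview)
Your proof is correct and follows essentially the same approach as the paper's own proof: the paper's generators $g_{\ell_1,\dots,\ell_n} = x_1^{\ell_1}\cdots x_n^{\ell_n}(\gamma-\sum_j x_j)^{k-\ell_1-\dots-\ell_n}$ are precisely your $y^a$ with $|a|=k$, the inclusion $\C_k \subseteq \C_{k+1}$ is obtained via the same identity $\Unit = \gamma^{-1}\sum_j y_j$, and the linear independence is established by the same dimension count against $\RR[x_1,\dots,x_n]_{\le k}$. Your write-up is slightly more explicit in justifying that $\mathrm{span}(B_k)=\RR[x_1,\dots,x_n]_{\le k}$ and in noting separately that $\S\otimes\Pos(J)$ is a semiring, but there is no substantive difference in the argument.
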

\begin{proof}
  Lemma~\ref{archhilfs} shows that $\S$ is Archimedean.
  For $k\in \NN_0$ let $\S_k$ be the convex cone of $\RR[x_1,\dots,x_n]$ that is generated 
  by all $g_{\ell_1,\dots,\ell_n} \coloneqq x_1^{\ell_1} \dots x_n^{\ell_n}(\gamma-\sum_{j=1}^n x_j)^{k-\ell_1-\dots - \ell_n}$
  with $\ell_1,\dots,\ell_n \in \NN_0$ such that $\ell_1+\dots +\ell_n \le k$. Then $\S_k \subseteq \S_{k+1}$ for all $k\in \NN_0$
  follows from multiplying any element of $\S_k$ with $1 = \gamma^{-1}x_1 + \dots + \gamma^{-1}x_n + \gamma^{-1}(\gamma-\sum_{j=1}^n x_j)$.
  Every monomial $x_1^{\ell_1} \dots x_n^{\ell_n}$ with $\ell_1,\dots,\ell_n \in \NN_0$ is an element of $\S_{\ell_1+\dots +\ell_n}$,
  hence $x_1^{\ell_1} \dots x_n^{\ell_n} \in \S_k$ whenever $\ell_1+\dots +\ell_n \le k$.
  It follows that $\S - \S$ is the linear subspace of $\RR[x_1,\dots,x_n]$
  of polynomials with degree at most $k$. Counting dimensions then shows that the generators $g_{\ell_1,\dots,\ell_n}$
  with $\ell_1,\dots,\ell_n \in \NN_0$, $\ell_1+\dots +\ell_n \le k$ are a basis of 
  this vector space, hence are linearly independent.
  This shows that $\S = \bigcup_{k\in \NN_0} \S_k$ is filtered simplicial.
  Consequently, $\S \otimes \Pos(J)$ is a $q$-localizable semiring of $\RR[x_1,\dots,x_n,z]$ by Proposition \ref{proposition:localizableZ}.
\end{proof}

By taking tensor products one can construct new examples of filtered simplicial convex cones.

\begin{proposition}
  Let $\A$ and $\B$ be commutative unital $\RR$-algebras, let $(\C_k)_{k\in \NN_0}$ and $(\mathcal{D}_k)_{k\in \NN_0}$
  be increasing sequences of simplicial convex cones of $\A$ and $\B$, respectively, and let
  $\C \coloneqq \bigcup_{k\in \NN_0} \C_k$ and $\mathcal{D} \coloneqq \bigcup_{k\in \NN_0} \mathcal{D}_k$ be the resulting filtered simplicial
  convex cones. Then $\C \otimes \mathcal{D} = \bigcup_{k\in \NN_0} \C_k \otimes \mathcal{D}_k$ is a filtered simplicial
  convex cone of $\A \otimes \B$.
\end{proposition}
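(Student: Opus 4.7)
The plan is to verify the three defining properties of a filtered simplicial convex cone for the sequence $(\C_k \otimes \mathcal{D}_k)_{k\in \NN_0}$: simpliciality of every term, monotonicity of the sequence, and the identity $\bigcup_{k\in \NN_0} \C_k \otimes \mathcal{D}_k = \C \otimes \mathcal{D}$. The tensor products $\C_k \otimes \mathcal{D}_k$ should be interpreted as the convex cones consisting of all finite sums $\sum_\ell a_\ell \otimes b_\ell$ with $a_\ell \in \C_k$ and $b_\ell \in \mathcal{D}_k$, which is the natural analogue of the tensor product of semirings introduced in \eqref{tensorpr} and agrees with the definition used in Proposition~\ref{proposition:localizableZ} once generators are fixed.

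Fix $k\in \NN_0$ and choose linearly independent generators $c_1,\dots,c_m \in \A$ of $\C_k$ and $d_1,\dots,d_n \in \B$ of $\mathcal{D}_k$. I would first observe that
\begin{equation*}
  \C_k \otimes \mathcal{D}_k
  =
  \set[\Big]{\sum\nolimits_{i=1}^m \sum\nolimits_{j=1}^n \alpha_{ij}\, c_i \otimes d_j}{\alpha_{ij} \in [0,\infty)},
\end{equation*}
the inclusion $\supseteq$ being immediate and $\subseteq$ following by expanding each summand $a_\ell \otimes b_\ell$ via $a_\ell = \sum_i \alpha_i c_i$ and $b_\ell = \sum_j \beta_j d_j$ with non-negative coefficients and using bilinearity. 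To conclude that $\C_k \otimes \mathcal{D}_k$ is simplicial it then suffices to show that the $mn$ simple tensors $c_i \otimes d_j$ are linearly independent in $\A \otimes \B$. This is the one step that is not just a direct unwinding of definitions: I would extend $\{c_1,\dots,c_m\}$ to an $\RR$-vector space basis of $\A$ and $\{d_1,\dots,d_n\}$ to one of $\B$, then invoke the standard fact that products of basis elements form an $\RR$-basis of $\A \otimes \B$, so in particular the $c_i \otimes d_j$ sit inside such a basis and are therefore linearly independent. This is the main (though very mild) obstacle of the proof.

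Monotonicity of $(\C_k \otimes \mathcal{D}_k)_{k\in \NN_0}$ is immediate: any $\sum_\ell a_\ell \otimes b_\ell \in \C_k \otimes \mathcal{D}_k$ already lies in $\C_{k+1} \otimes \mathcal{D}_{k+1}$ because $\C_k \subseteq \C_{k+1}$ and $\mathcal{D}_k \subseteq \mathcal{D}_{k+1}$. For the final equality, $\bigcup_{k\in \NN_0}\C_k \otimes \mathcal{D}_k \subseteq \C \otimes \mathcal{D}$ is clear from $\C_k \subseteq \C$ and $\mathcal{D}_k \subseteq \mathcal{D}$, while any element $\sum_{\ell = 1}^N a_\ell \otimes b_\ell$ of $\C \otimes \mathcal{D}$ involves only finitely many $a_\ell \in \C = \bigcup_k \C_k$ and $b_\ell \in \mathcal{D} = \bigcup_k \mathcal{D}_k$; since both filtrations are increasing, a single $k\in \NN_0$ can be chosen large enough so that $a_\ell \in \C_k$ and $b_\ell \in \mathcal{D}_k$ for every $\ell \in \{1,\dots,N\}$, placing the sum in $\C_k \otimes \mathcal{D}_k$ and completing the proof.
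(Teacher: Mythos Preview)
Your proof is correct and follows essentially the same approach as the paper's. The only cosmetic difference lies in how you justify the linear independence of the $c_i \otimes d_j$: you extend $\{c_i\}$ and $\{d_j\}$ to full bases of $\A$ and $\B$, whereas the paper counts dimensions via the identity $(\C_k \otimes \mathcal{D}_k) - (\C_k \otimes \mathcal{D}_k) = (\C_k - \C_k) \otimes (\mathcal{D}_k - \mathcal{D}_k)$; both arguments encode the same standard fact about tensor products of linearly independent families.
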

\begin{proof}
  It is easily checked that $(\C_k \otimes \mathcal{D}_k)_{k\in \NN_0}$ is an increasing sequence of convex cones of
  $\A \otimes \B$ and that $\C \otimes \mathcal{D} = \bigcup_{k\in \NN_0} \C_k \otimes \mathcal{D}_k$. It only remains
  to show that $\C_k \otimes \mathcal{D}_k$ for arbitrary $k\in \NN_0$ is a simplicial convex cone:
  A suitable set of linearly independent generators of $\C_k \otimes \mathcal{D}_k$ is the set of tensor products
  of linearly independent generators of $\C_k$ and $\mathcal{D}_k$. This follows immediately from the observation that 
  $(\C_k \otimes \mathcal{D}_k) - (\C_k \otimes \mathcal{D}_k) = (\C_k - \C_k) \otimes (\mathcal{D}_k-\mathcal{D}_k)$ and by counting dimensions.
\end{proof}

\begin{corollary}
  The semiring of $\RR[x_1,\dots,x_n]$ which is generated by the polynomials $\bigcup_{i = 1}^n \{x_i,1-x_i\}$ is filtered simplicial.
\end{corollary}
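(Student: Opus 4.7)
My plan is to reduce the statement to the one-variable case and then invoke the tensor product proposition proved just above.

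First I would handle the one-variable semiring $\S^{(1)} \subseteq \RR[x]$ generated by $\{x, 1-x\}$. For each $k \in \NN_0$, set
\begin{equation*}
  \S^{(1)}_k \coloneqq \set[\big]{\textstyle\sum_{a+b = k} \alpha_{a,b}\, x^a (1-x)^b }{ \alpha_{a,b} \in [0,\infty) }.
\end{equation*}
I need three things. \emph{Inclusion} $\S^{(1)}_k \subseteq \S^{(1)}_{k+1}$: multiply each generator by $\Unit = x + (1-x)$, which gives $x^a(1-x)^b = x^{a+1}(1-x)^b + x^a(1-x)^{b+1}$, a non-negative combination of generators of $\S^{(1)}_{k+1}$. \emph{Union} $\bigcup_k \S^{(1)}_k = \S^{(1)}$: every element of $\S^{(1)}$ is a finite non-negative combination of products $x^a(1-x)^b$, and by repeatedly multiplying by $x + (1-x)$ one can raise all such summands to a common total exponent. \emph{Linear independence} of the $k+1$ generators $\{x^a(1-x)^b : a+b = k\}$: expanding $(1-x)^b$, the element $x^a(1-x)^b$ has $x^a$ as its lowest-degree monomial with coefficient $1$, and these lowest-degree terms $x^0, x^1, \dots, x^k$ are visibly linearly independent, hence so are the generators themselves. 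Thus $\S^{(1)} = \bigcup_k \S^{(1)}_k$ is filtered simplicial.

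Next I would observe that the semiring $\S$ generated by $\bigcup_{i=1}^n \{x_i, 1-x_i\}$ in $\RR[x_1,\dots,x_n] \cong \RR[x_1] \otimes \cdots \otimes \RR[x_n]$ coincides with the $n$-fold tensor product semiring $\S^{(1)} \otimes \cdots \otimes \S^{(1)}$ in the sense of \eqref{tensorpr}. Indeed, both consist of all finite non-negative linear combinations of products $\prod_{i=1}^n x_i^{a_i}(1-x_i)^{b_i}$: the tensor product side produces these by distributing out sums of pure tensors $p_1(x_1) \otimes \cdots \otimes p_n(x_n)$ with $p_i \in \S^{(1)}$, and the semiring generated by the $x_i$ and $1-x_i$ is by definition the non-negative span of such monomial products.

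Finally I would apply the proposition immediately preceding the corollary, iterated $n-1$ times, to conclude that the tensor product $\S^{(1)} \otimes \cdots \otimes \S^{(1)}$ is filtered simplicial, with the obvious filtration $\bigcup_k \S^{(1)}_k \otimes \cdots \otimes \S^{(1)}_k$. Since this tensor product equals $\S$, the corollary follows. The only step that requires any genuine thought is the linear independence of the generators in the one-variable case; the rest is a mechanical assembly of previously established facts.
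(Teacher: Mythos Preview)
Your proof is correct and follows the same route as the paper: identify the semiring as the $n$-fold tensor power of the one-variable semiring generated by $x$ and $1-x$, then apply the tensor-product proposition. The only difference is that the paper obtains the one-variable filtered simplicial structure as the special case $n=1$, $\gamma=1$ of Corollary~\ref{corollary:examplesimplex}, whereas you re-prove it directly via the lowest-degree-term argument; both arguments are straightforward and yield the same filtration.
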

\begin{proof}
  This semiring is the $k$-th tensor power of the semiring of $\RR[x]$  generated by $x$ and $1-x$, which is
  filtered simplicial by Corollary~\ref{corollary:examplesimplex}.
\end{proof}

\section{The filtered Positivstellensatz}\label{filtered}

First we develop the symbol algebra of a filtered commutative unital algebra.

\begin{definition}
  A \emph{filtered} algebra is a unital algebra $\A$ that is the union   $\A = \bigcup_{k\in \NN_0} \A^{(k)}$ of a sequence 
  of linear subspaces $\A^{(k)}$, $k\in \NN_0$, such that 
  \begin{align*}
  \Unit \in \A^{(0)}, ~~ \A^{(k)} \subseteq \A^{(k+1)},
   ~~ \A^{(k)} \A^{(\ell)} \subseteq \A^{(k+\ell)}\quad  \rm{for}\quad k,\ell \in \NN_0 .
  \end{align*}
  The sequence $(\A^{(k)})_{k \in \NN_0}$ is called the \emph{filtration} of the filtered algebra $\A$.

  For a filtered algebra $\A$ with filtration $(\A^{(k)})_{k \in \NN_0}$ we  define $\A^{(0)}_0 \coloneqq \{0\} \subseteq \A^{(0)}$ and
  \begin{align*}
    \A^{(k)}_0
    \coloneqq
    \set[\big]{a\in \A^{(k)}}{\textup{there exists $m\in \NN$ such that }a^m \in \A^{(km-1)}},~ k\in \NN.
  \end{align*}
  \end{definition}

\begin{lemma} \label{lemma:filteredObs}
  Suppose $\A$ is a filtered commutative unital $\RR$-algebra. Then  $\A^{(k)}_0$
  is a linear subspace of $\A^{(k)}$ for $k\in \NN_0$. We write $[\argument]^{k\msb} \colon \A^{(k)} \to \A^{(k)} / \A^{(k)}_0$
  for the canonical projection on the quotient vector space.
  Then, for all $k,\ell \in \NN_0$, we have $\A^{(k)}_0 \A^{(\ell)} \subseteq \A^{(k+\ell)}_0$ and
  the product of $\A$ gives well-defined  bilinear maps $\A^{(k)} / \A^{(k)}_0 \times \A^{(\ell)} / \A^{(\ell)}_0 \to \A^{(k+\ell)} / \A^{(k+\ell)}_0$,
  \begin{equation*}
    \big( [a]^{k\msb} ,  [b]^{\ell\msb} \big) \mapsto  [a]^{k\msb}  [b]^{\ell\msb} \coloneqq [ab]^{(k+\ell)\msb}.
  \end{equation*}
\end{lemma}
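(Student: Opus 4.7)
The plan is to verify the three assertions in order, each building on the previous. The proofs are mostly formal manipulations using the filtration inclusion $\A^{(k)}\A^{(\ell)} \subseteq \A^{(k+\ell)}$, but the closure of $\A^{(k)}_0$ under addition requires a small combinatorial trick with the binomial expansion, and this is the only nontrivial point.

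For the first assertion I would show that $\A^{(k)}_0$ is closed under scalar multiplication (immediate, since $(\lambda a)^m = \lambda^m a^m$) and under addition. For addition, given $a,b \in \A^{(k)}_0$ with $a^m \in \A^{(km-1)}$ and $b^n \in \A^{(kn-1)}$, I set $p \coloneqq m+n-1$ and expand $(a+b)^p = \sum_{j=0}^p \binom{p}{j} a^j b^{p-j}$ (using commutativity of $\A$). For each $j \in \{0,\dots,p\}$, either $j \ge m$ or $p-j \ge n$; in the first case $a^j = a^m a^{j-m} \in \A^{(km-1)}\A^{(k(j-m))} \subseteq \A^{(kj-1)}$ and then $a^j b^{p-j} \in \A^{(kp-1)}$; the other case is symmetric. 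Hence every summand lies in $\A^{(kp-1)}$, so $a+b \in \A^{(k)}_0$. This is the main obstacle, though it is not deep.

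For the second assertion, given $a \in \A^{(k)}_0$ with $a^m \in \A^{(km-1)}$ and $b \in \A^{(\ell)}$, I observe that $ab \in \A^{(k+\ell)}$ and $(ab)^m = a^m b^m \in \A^{(km-1)} \A^{(\ell m)} \subseteq \A^{((k+\ell)m-1)}$, so $ab \in \A^{(k+\ell)}_0$, as required.

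For the third assertion (well-definedness of the induced product on the quotients), suppose $a,a' \in \A^{(k)}$ with $a-a' \in \A^{(k)}_0$ and $b,b' \in \A^{(\ell)}$ with $b-b' \in \A^{(\ell)}_0$. Then $ab - a'b' = (a-a')b + a'(b-b')$, and by the second assertion both summands lie in $\A^{(k+\ell)}_0$; closure under addition from the first assertion then yields $ab-a'b' \in \A^{(k+\ell)}_0$, so $[ab]^{(k+\ell)\msb}$ depends only on the classes $[a]^{k\msb}$ and $[b]^{\ell\msb}$. Bilinearity of the resulting map is inherited from bilinearity of the product of $\A$ together with the fact that $[\argument]^{k\msb}$ and $[\argument]^{\ell\msb}$ are linear.
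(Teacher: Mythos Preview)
Your proof is correct and follows essentially the same approach as the paper's: the binomial expansion trick for closure under addition, the direct computation $(ab)^m = a^m b^m$ for the inclusion $\A^{(k)}_0 \A^{(\ell)} \subseteq \A^{(k+\ell)}_0$, and passage to quotients. The only cosmetic differences are that you use the slightly sharper exponent $p=m+n-1$ where the paper uses $m+n$, and that you do not explicitly single out the trivial case $k=0$ (where $\A^{(0)}_0=\{0\}$ by definition); neither of these is a gap.
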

\begin{proof}
  Given $k\in \NN$ and $a,b \in \A^{(k)}_0$,  there exist $m,n\in \NN$ such that $a^m \in \A^{(km-1)}$ and $b^n \in \A^{(kn-1)}$. 
  By the binomial theorem,  $(a+b)^{m+n}$ is a linear combination of elements $a^{m'} b^{n'}$, where $m',n' \in \NN_0$
  and $m'+n' = m+n$.  This implies that $(a+b)^{m+n} \in \A^{(k(m+n)-1)}$, so  $a+b \in \A^{(k)}_0$. It is obvious that 
  $\lambda a \in \A^{(k)}_0$ for $\lambda \in \RR$. This shows that $\A^{(k)}_0$ is a linear subspace of $\A^{(k)}$.
  
  Next let $a \in \A^{(k)}_0$ and $b\in \A^{(\ell)}$ for $k\in \NN$ and $\ell \in \NN_0$. Then
  there exists $m\in \NN$ such that $a^m \in \A^{(km-1)}$ and therefore $(ab)^m = a^m b^m \in \A^{(km-1)} \A^{(\ell m)} \subseteq \A^{((k+\ell)m-1)}$,
  so $ab \in \A^{(k+\ell)}_0$. It follows that $\A^{(k)}_0 \A^{(\ell)} \subseteq \A^{(k+\ell)}_0$. In the  case $k=0$
  it is clear that $\A^{(0)}_0 \A^{(\ell)} = \{0\} \subseteq \A^{(\ell)}_0$ for  $\ell \in \NN_0$.
  
  The preceding implies that the quotient vector spaces $\A^{(k)} / \A^{(k)}_0$ exist and that
  the bilinear maps $\A^{(k)} \times \A^{(\ell)} \ni (a,b) \mapsto ab \in \A^{(k+\ell)}$ descend to the quotients for all $k, \ell \in \NN_0$.
\end{proof}

These bilinear maps are now put together to construct a new algebra  of ``symbols'' which encodes information about the behaviour of algebra elements ``at $\infty$''.

\begin{definition}
  Let $\A$ be a filtered commutative unital $\RR$-algebra with filtration $(\A^{(k)})_{k \in \NN_0}$.
  We equip the graded vector space $\A^\sb \coloneqq \bigoplus_{k\in \NN_0} \A^{(k)} / \A^{(k)}_0$
  with the product $\A^\sb \times \A^\sb \to \A^\sb$,
  \begin{equation}
    \Bigg( \bigoplus_{k\in \NN_0} [a_k]^{k\msb},  \bigoplus_{\ell\in\NN_0} [b_\ell]^{\ell\msb} \Bigg)
    \mapsto
    \Bigg( \bigoplus_{k\in \NN_0}  [a_k]^{k\msb} \Bigg) \Bigg( \bigoplus_{\ell\in\NN_0} [b_\ell]^{\ell\msb} \Bigg)
    \coloneqq
    \bigoplus_{m\in\NN_0} \Big[ \sum\nolimits_{n=0}^m a_{m-n} b_n \Big]^{m\msb},
  \end{equation}
  and call $\A^\sb$ the \emph{algebra of symbols} of $\A$.
\end{definition}
  
We identify each quotient vector space $\A^{(k)}/\A^{(k)}_0$  with the corresponding linear subspace of $\A^\sb$.
Then the canonical projections  $[\argument]^{k\msb}$ from Lemma~\ref{lemma:filteredObs}
become linear maps $[\argument]^{k\msb} \colon \A^{(k)} \to \A^\sb$, which will be called  \emph{symbol maps}.

It is easily checked that $\A^\sb$ is indeed a commutative unital $\RR$-algebra with unit $\Unit_{\A^\sb} = [\Unit_\A]^{0\msb}$.
Under the identification of the quotient vector spaces $\A^{(k)}/\A^{(k)}_0$ with subspaces of $\A^\sb$,
the multiplication of $\A^\sb$ can  be rewritten as
\begin{equation*}
  \Big( \sum\nolimits_{k \in \NN_0} [a_k]^{k\msb} \Big) \Big( \sum\nolimits_{\ell\in \NN_0} [b_\ell]^{\ell\msb} \Big)
  =
  \sum_{m \in \NN_0} \Big[ \sum\nolimits_{n=0}^m a_{m-n} b_n\Big]^{m\msb}
   =
  \sum_{k,\ell \in \NN_0} \big[a_k b_\ell\big]^{(k+\ell)\msb}
\end{equation*}
for finite sequences $(a_k)_{k\in \NN_0}$, $(b_\ell)_{\ell\in \NN_0}$, where $a_k \in \A^{(k)}$, $b_\ell \in \A^{(\ell)}$. 


\begin{example} \label{example:filtrationsbydegree}
  Let $\A\coloneqq\RR[x_1,\dots,x_n]$, $n\in \NN$. We assign a number $\gamma_j \in [0,\infty)$, called the \emph{degree},
  to each generator $x_j$, $j\in \{1,\dots,n\}$. Using the standard multi-index notation $x^\ell=x_1^{\ell_1,}\cdots x_n^{\ell_n}$
  for $\ell \in \NN_0^n$ we define 
  \begin{equation*}
    \RR[x_1,\dots,x_n]^{(k)}
    \coloneqq
    {\Lin \set[\Big]{ x^\ell }{ \ell \in \NN_0^n \textup{ such that } \sum\nolimits_{j=1}^n \gamma_j \ell_j \le k } }
  \end{equation*}
  for $k\in \NN_0$. Then $\RR[x_1,\dots,x_n]$ becomes a filtered  $\RR$-algebra with filtration $(\RR[x_1,\dots,x_n]^{(k)})_{k\in \NN_0}$.
  In this case it is easy to check that
  \begin{equation*}
    \RR[x_1,\dots,x_n]^{(k)}_0
    =
    \Lin \set[\Big]{ x^\ell }{ \ell \in \NN_0^n \textup{ such that } \sum\nolimits_{j=1}^n \gamma_j \ell_j < k  }
  \end{equation*}
  for $k\in \NN$. The quotients $\RR[x_1,\dots,x_n]^{(k)} / \RR[x_1,\dots,x_n]^{(k)}_0$  can then be identified with the complementary linear subspaces
  \begin{equation*}
    \RR[x_1,\dots,x_n]^k
    \coloneqq
    \Lin  \set[\Big]{ x^\ell }{ \ell \in \NN_0^n \textup{ such that } \sum\nolimits_{j=1}^n \gamma_j \ell_j = k }
  \end{equation*}
  of $\RR[x_1,\dots,x_n]^{(k)}_0$ in $\RR[x_1,\dots,x_n]^{(k)}$ such that $\RR[x_1,\dots,x_n]^{(k)} = \RR[x_1,\dots,x_n]^k \oplus \RR[x_1,\dots,x_n]^{(k)}_0$
  and the symbol maps then become the projections $[\argument]^{k\msb} \colon \RR[x_1,\dots,x_n]^{(k)} \to \RR[x_1,\dots,x_n]^k$
  with respect to this decomposition. Thus, the symbol algebra is the unital subalgebra $\RR[x_1,\dots,x_n]^\sb = \sum_{k\in \NN_0} \RR[x_1,\dots,x_n]^k$.
\end{example}

Next we extend the construction of the symbol algebra to modules of semirings of the algebra.

\begin{definition} \label{definition:Csb}
  Let $\A$ be a filtered commutative unital $\RR$-algebra and $\C$ a convex cone of $\A$ with $\Unit \in \C$. Then we define
  \begin{equation}
    \C^\sb \coloneqq \set[\Big]{\sum\nolimits_{k=0}^K [c_k]^{k\msb}}{K\in\NN_0;\,c_k \in \C \cap \A^{(k)}\textup{ for }k\in \{0,\dots,K\} }
    .
  \end{equation}
\end{definition}

\begin{proposition} \label{proposition:Csb}
  Let $\A$ be a filtered commutative unital $\RR$-algebra, $\S$ a semiring of $\A$ and $\C\subseteq A$ an $\S$-module.
  Then $\S^\sb$ is a semiring of $\A^\sb$ and $\C^\sb$ is an $\S^\sb$-module of $\A^\sb$.
\end{proposition}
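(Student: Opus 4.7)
The plan is to verify all the required algebraic closure properties directly from Definition~\ref{definition:Csb}, using that the symbol maps $[\argument]^{k\msb}$ are linear and, by Lemma~\ref{lemma:filteredObs}, compatible with the product in the sense $[a]^{k\msb}[b]^{\ell\msb} = [ab]^{(k+\ell)\msb}$. Throughout, I will simultaneously handle both assertions since the argument for $\S^\sb$ being a semiring is the special case $\C = \S$ of the argument for $\C^\sb$ being an $\S^\sb$-module.

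First I would check that $\C^\sb$ is a convex cone containing $\Unit$. Since $\Unit \in \C \cap \A^{(0)}$, we have $\Unit_{\A^\sb} = [\Unit]^{0\msb} \in \C^\sb$. Given two elements $\sum_{k=0}^K [c_k]^{k\msb}$ and $\sum_{k=0}^{K'}[c'_k]^{k\msb}$ of $\C^\sb$, extend the shorter sum by zeros and observe that $c_k + c'_k \in \C \cap \A^{(k)}$ because $\C$ is closed under addition and $\A^{(k)}$ is a linear subspace; similarly $\lambda c_k \in \C \cap \A^{(k)}$ for $\lambda \geq 0$. Hence $\C^\sb + \C^\sb \subseteq \C^\sb$ and $\lambda \cdot \C^\sb \subseteq \C^\sb$.

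The main (though still straightforward) step is multiplicative closure. Let $\sum_{k=0}^K [s_k]^{k\msb} \in \S^\sb$ with $s_k \in \S \cap \A^{(k)}$, and $\sum_{\ell=0}^L [c_\ell]^{\ell\msb} \in \C^\sb$ with $c_\ell \in \C \cap \A^{(\ell)}$. Using the product formula for $\A^\sb$ recalled above, their product equals
\begin{equation*}
  \sum_{m=0}^{K+L} \Big[\sum\nolimits_{k+\ell = m} s_k c_\ell\Big]^{m\msb}.
\end{equation*}
For each pair $(k,\ell)$ with $k+\ell = m$, we have $s_k c_\ell \in \C$ because $\C$ is an $\S$-module, and $s_k c_\ell \in \A^{(k)}\A^{(\ell)} \subseteq \A^{(m)}$ by the defining property of the filtration. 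Since $\C \cap \A^{(m)}$ is closed under addition, the inner sum lies in $\C \cap \A^{(m)}$, so the whole expression is an element of $\C^\sb$. Specializing to $\C = \S$ shows $\S^\sb \cdot \S^\sb \subseteq \S^\sb$, completing the proof that $\S^\sb$ is a semiring of $\A^\sb$ and $\C^\sb$ is an $\S^\sb$-module.

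I do not expect any genuine obstacle; the content is entirely bookkeeping, and the only thing to be careful about is that the definition of $\C^\sb$ allows the coefficients $c_k$ to be chosen \emph{after} grouping terms by filtration degree, which is exactly what makes the product expansion above land inside $\C^\sb$.
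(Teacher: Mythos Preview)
Your proposal is correct and follows essentially the same approach as the paper's proof: verify that $\C^\sb$ is a convex cone containing the unit, then expand the product of a generic element of $\S^\sb$ with one of $\C^\sb$ via the bilinearity established in Lemma~\ref{lemma:filteredObs}, and observe that each graded component lies in $\C\cap\A^{(m)}$; the specialization $\C=\S$ gives the semiring statement. If anything, your write-up is slightly more explicit about why the inner sum $\sum_{k+\ell=m} s_k c_\ell$ lands in $\C\cap\A^{(m)}$ than the paper's one-line appeal to the definitions.
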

\begin{proof}
  First consider a convex cone $\C$ of $\A$. Then it is easy to check that $\C^\sb$ is a convex cone of $\A^\sb$,
  and if $\Unit_\A \in \C$, then $\Unit_{\A^\sb} = [\Unit_\A]^{0\msb} \in \C^\sb$.
  
  Now let $\S$ be a semiring of $\A$  and assume that $\C\subseteq \A$ is an $\S$-module.
  Then, for $K,L \in \NN_0$ and $c_k\in \C \cap \A^{(k)}$, $s_\ell \in \S \cap \A^{(\ell)}$ for all $k\in \{0,\dots,K\}$, $\ell\in \{0,\dots,L\}$ we  obtain
  \begin{equation*}
    \bigg( \sum_{k=0}^K [c_k]^{k\msb} \bigg)\bigg( \sum_{\ell=0}^L [s_\ell]^{\ell\msb} \bigg) = \sum_{m=0}^{K+L} \sum_{n=0}^m [c_{m-n}s_m]^{m\msb} \in \C^\sb
  \end{equation*}
  by the definition of the multiplication of $\A^\sb$ and of $\C^\sb$. For $\C = \S$ this shows that $\S^\sb$ is a semiring of $\A^\sb$, and in the general case that
  $\C^\sb$  is an $\S^\sb$-module.
\end{proof}

In order  to state the filtered Positivstellensatz, we  need one more definition.

\begin{definition}
  Let $\A$ be a filtered commutative unital $\RR$-algebra and $\S$ a generating semiring of $\A$. An element $s \in (\Unit + \S) \cap \A^{(1)}$
  is called \emph{compatible with the filtration of $\A$} if for all $k\in \NN_0$ and  $a\in\A^{(k)}$ there exists a number $\lambda\in(0,\infty)$ such 
  that $\lambda s^k - a \in \S$.
\end{definition}
Note that in this case the element $s$ is in particular  cofinal in $\A$.
This compatibility condition can be considered as a generalization of the Archimedean property: 
A semiring $\S$ of a commutative unital $\RR$-algebra $\A$ is Archimedean if and only if $\Unit$
is compatible with the trivial filtration on $\A$ which is defined by $\A^{(k)} \coloneqq \A$ for all $k\in\NN_0$.

\begin{example}
  Let $\A$ be a commutative unital $\RR$-algebra and $\S$ a semiring of $\A$. Suppose that $s \in \Unit + \S$ is cofinal in $\S$.
  Then, using Lemma~\ref{archhilfs} one can easily check that for all $k\in \NN_0$,
  \begin{equation*}
    \A^{(k)}
    \coloneqq
    \set[\big]{a\in\A}{\textup{there exists }~\lambda\in(0,\infty)~\textup{ such that }~{-\lambda s^k} \preceq a \preceq \lambda s^k }
  \end{equation*}
  are linear subspaces of $\A$, turning $\A$ into a filtered commutative unital $\RR$-algebra.
  For this construction it is clear that $s \in (\Unit + \S) \cap \A^{(1)}$ and that $s$ is compatible with the filtration of $\A$.
\end{example}
However, for obtaining explicit examples it is often easier to fix the filtration first and then find a (necessarily cofinal)
element compatible with the filtration. This will be discussed in Section~\ref{examples}.

The main result of this section is the following filtered Positivstellensatz.

\begin{theorem} \label{theorem:filteredPositivstellsatz}
  Let $\A$ be a filtered commutative unital $\RR$-algebra, $\S$ a generating semiring of $\A$ and $\C\subseteq \A$ an $\S$-module.
  Suppose $s\in (\Unit+\S) \cap \A^{(1)}$ is compatible with the filtration of $\A$. Then, for any $a\in\A^{(k)}$, $k\in \NN_0$, the following are equivalent:
  \begin{enumerate}
    \item \label{item:filteredPositivstellsatz:ana}
      $\varphi(a) > 0$ for all $\varphi \in \characters(\A,\C)$ and $\psi([a]^{k\msb}) > 0$ for all $\psi \in \characters(\A^\sb,\C^\sb)$ with $\psi([s]^{1\msb}) = 1$.
    \item \label{item:filteredPositivstellsatz:alg}
      There exist numbers $\epsilon \in (0,\infty)$ and $\ell \in \NN_0$ such that $s^\ell a \in \epsilon s^{k+\ell} + \C$.
  \end{enumerate}
\end{theorem}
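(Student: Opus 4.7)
The implication \refitem{item:filteredPositivstellsatz:alg}~$\Rightarrow$~\refitem{item:filteredPositivstellsatz:ana} I would dispatch by inspection. From $s-\Unit\in\S\subseteq\C$ one has $\varphi(s)\geq 1$ for all $\varphi\in\characters(\A,\C)$, so evaluating $s^\ell a=\epsilon s^{k+\ell}+c$ gives $\varphi(a)\geq\epsilon\varphi(s)^k>0$; passing the same identity through the symbol maps at level $k+\ell$ and using $\psi([s]^{1\msb})=1$ yields $\psi([a]^{k\msb})\geq\epsilon>0$.

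For the converse the plan is to lift everything to the localization $\A_s\coloneqq\A[z]/(sz-\Unit)$, in which $s$ becomes invertible with inverse $z$, and then invoke the Archimedean Positivstellensatz (Theorem~\ref{archpos}). In $\A_s$ I would introduce the subalgebra
\[
\tilde{\A}\coloneqq\set[\big]{z^k a}{k\in\NN_0,\,a\in\A^{(k)}}
\]
together with analogous subsets $\tilde{\S}$ and $\tilde{\C}$ built from $\S\cap\A^{(k)}$ and $\C\cap\A^{(k)}$. Using $z^k a=z^\ell(s^{\ell-k}a)$ whenever $\ell\geq k$ to put sums on a common denominator, it is routine to check that $\tilde{\A}$ is a unital subalgebra, $\tilde{\S}$ a semiring, and $\tilde{\C}$ an $\tilde{\S}$-module. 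The compatibility of $s$ with the filtration translates directly into Archimedean-ness of $\tilde{\S}$: for every generator $z^k a$ of $\tilde{\A}$ compatibility (applied to both $a$ and $-a$) supplies $\lambda>0$ with $\lambda s^k\pm a\in\S\cap\A^{(k)}$, so that $\lambda\cdot\Unit\pm z^k a=z^k(\lambda s^k\pm a)\in\tilde{\S}$, and Lemma~\ref{archhilfs} applies.

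The core of the argument is then to identify $\characters(\tilde{\A},\tilde{\C})$ with $\characters(\A,\C)\sqcup\set[\big]{\psi\in\characters(\A^\sb,\C^\sb)}{\psi([s]^{1\msb})=1}$. For $\tilde{\varphi}\in\characters(\tilde{\A},\tilde{\C})$, nonnegativity on $z=z\cdot\Unit\in\tilde{\S}$ forces $t\coloneqq\tilde{\varphi}(z)\geq 0$. If $t>0$, then $\varphi(b)\coloneqq\tilde{\varphi}(z^k b)/t^k$ for $b\in\A^{(k)}$ is independent of $k$ (since $z^k b=z^{k+1}(sb)$ in $\A_s$) and yields a character $\varphi\in\characters(\A,\C)$. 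If $t=0$, then any $a\in\A^{(k)}_0$ admits some $a^m\in\A^{(km-1)}$, so
\[
\tilde{\varphi}(z^k a)^m=\tilde{\varphi}(z^{km}a^m)=\tilde{\varphi}(z)\,\tilde{\varphi}(z^{km-1}a^m)=0;
\]
hence $\tilde{\varphi}$ factors through the symbol maps and $\psi([a]^{k\msb})\coloneqq\tilde{\varphi}(z^k a)$ gives a character $\psi\in\characters(\A^\sb,\C^\sb)$ with $\psi([s]^{1\msb})=\tilde{\varphi}(sz)=1$. The inverse assignments send $\varphi$ to $z^k a\mapsto\varphi(a)/\varphi(s)^k$ (well-defined as $\varphi(s)\geq 1>0$) and $\psi$ to $z^k a\mapsto\psi([a]^{k\msb})$.

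With the bijection in place, for $a\in\A^{(k)}$ set $\tilde{a}\coloneqq z^k a\in\tilde{\A}$; condition \refitem{item:filteredPositivstellsatz:ana} then translates exactly to $\tilde{\varphi}(\tilde{a})>0$ for all $\tilde{\varphi}\in\characters(\tilde{\A},\tilde{\C})$. Theorem~\ref{archpos} produces $\epsilon>0$ and $c\in\C\cap\A^{(\ell')}$ with $z^k a-\epsilon\cdot\Unit=z^{\ell'}c$ in $\A_s$; clearing denominators by $s^{k+\ell'}$, and (if necessary) absorbing the $s$-torsion of $\A\to\A_s$ by an extra power of $s$, delivers $s^\ell a=\epsilon s^{k+\ell}+s^N c$ in $\A$ with $s^N c\in\C$, which is \refitem{item:filteredPositivstellsatz:alg}. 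The step I expect to be the main obstacle is the character bijection: verifying rigorously that the $t=0$ case really factors through the somewhat non-standard ideal $\A^{(k)}_0$ used to define $\A^\sb$, and unwrapping the final $\A_s$-identity back to $\A$ in the possible presence of $s$-torsion.
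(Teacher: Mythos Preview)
Your proof is correct and follows essentially the same strategy as the paper: construct an auxiliary algebra carrying an Archimedean semiring and $\S$-module whose positive characters split into the $\characters(\A,\C)$ piece and the normalized $\characters(\A^\sb,\C^\sb)$ piece according to whether the value on $z$ is nonzero, then invoke Theorem~\ref{archpos}. The character dichotomy you describe, including the $t=0$ factoring through $\A^{(k)}_0$ via $\tilde\varphi(z^ka)^m=\tilde\varphi(z)\tilde\varphi(z^{km-1}a^m)=0$, is exactly Lemma~\ref{lemma:liftingcharacters} of the paper.

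The one genuine technical difference is that the paper works in the \emph{subalgebra} $\A[z]^\bd\subseteq\A[z]$ of polynomials $\sum_k a_kz^k$ with $a_k\in\A^{(k)}$, without passing to any quotient; the relation $sz=\Unit$ is enforced only at the level of positive characters by putting both $sz-\Unit$ and $\Unit-sz$ into the module $\C[z]^\bd_s$ (defined by the condition $\sum_k s^{K-k}a_k\in\C$). Your $\tilde\A$ is precisely the image of $\A[z]^\bd$ under the quotient map to $\A_s$, and your $\tilde\C$ is the image of $\C[z]^\bd_s$. The payoff of the paper's choice is that the final step is cleaner: from $az^k-\epsilon\Unit\in\C[z]^\bd_s$ one reads off directly an identity $s^{K-k}a-\epsilon s^K\in\C$ in $\A$ itself, with no need to clear denominators or absorb $s$-torsion. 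Your route is conceptually natural (genuinely inverting $s$) but buys that intuition at the cost of the torsion bookkeeping you flag at the end; your handling of it is correct.
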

In order to prove this theorem, we  construct an auxiliary algebra $\A[z]^\bd$ endowed with a module of an Archimedean semiring.
Like in Proposition~\ref{proposition:localizableZ} we write $\A[z] \cong \A \otimes \RR[z]$ for the commutative unital $\RR$-algebra
of polynomials in one variable $z$ with coefficients in $\A$.

\begin{definition}
  Let $\A$ be a filtered commutative unital $\RR$-algebra. We define $\A[z]^\bd$ to be the unital subalgebra of
  $\A[z]$ that is generated by $\bigcup_{k\in \NN_0} \set{a z^k}{a \in \A^{(k)}}$.
\end{definition}
More explicitly, one finds that
\begin{equation*}
  \A[z]^\bd
  =
  \set[\Big]{
    \sum\nolimits_{k=0}^K a_k z^k
  }{
    K\in \NN_0;\,a_k \in \A^{(k)} \textup{ for }k\in \{0,\dots,K\}
  }
  .
\end{equation*}
The crucial property of this auxiliary algebra is that each of its characters can be lifted  either to the original algebra $\A$ or to the algebra of symbols $\A^\sb$:

\begin{lemma} \label{lemma:liftingcharacters}
  Let $\A$ be a filtered commutative unital $\RR$-algebra and $\rho \in \characters(\A[z]^\bd)$.
  \begin{enumerate}
    \item If $\rho(z) \neq 0$, then there exists a unique $\hat\rho \in \characters(\A)$ such that
      $\rho(a z^k) = \hat\rho(a) \,\rho(z)^k$  for all $k\in \NN_0$ and  $a\in \A^{(k)}$.
    \item If $\rho(z) = 0$, then there exists a unique $\hat\rho \in \characters(\A^\sb)$ such that
      $\rho(a z^k) = \hat\rho([a]^{k\msb})$  for all $k\in \NN_0$ and $a\in \A^{(k)}$.
  \end{enumerate}
\end{lemma}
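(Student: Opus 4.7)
The plan is to construct the desired character $\hat\rho$ in each case by collecting or dividing away the appropriate power of $z$, and then to verify the character axioms grade by grade. For part (i), with $\rho(z)\neq 0$, I would set $\hat\rho(a)\coloneqq \rho(az^k)\,\rho(z)^{-k}$ whenever $a\in\A^{(k)}$. This is independent of the choice of $k$: if $a\in\A^{(k)}\subseteq \A^{(\ell)}$ for some $\ell\ge k$, then $az^\ell=(az^k)\,z^{\ell-k}$ with both factors in $\A[z]^\bd$, so $\rho(az^\ell)=\rho(az^k)\,\rho(z)^{\ell-k}$ and both candidate values coincide. The character axioms of $\hat\rho$ on $\A$ then follow directly by choosing a common filtration level for any pair $a,b\in\A$ and invoking the corresponding identities for $\rho$. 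Uniqueness is forced by the prescribed identity $\rho(az^k)=\hat\rho(a)\,\rho(z)^k$.

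For part (ii), with $\rho(z)=0$, the candidate is $\hat\rho([a]^{k\msb})\coloneqq \rho(az^k)$ for $a\in\A^{(k)}$, extended by linearity to the direct sum $\A^\sb=\bigoplus_{k\in\NN_0} \A^{(k)}/\A^{(k)}_0$. The main obstacle, and really the heart of the proof, is showing that this prescription descends to the quotient, i.e.\ that $\rho(az^k)=0$ whenever $a\in\A^{(k)}_0$. For this I would invoke the defining property of $\A^{(k)}_0$: there exists $m\in\NN$ with $a^m\in\A^{(km-1)}$. Then $a^m z^{km-1}\in\A[z]^\bd$, and since $z\in\A[z]^\bd$ as well, multiplicativity of $\rho$ gives
\begin{equation*}
  \rho(az^k)^m = \rho\big((az^k)^m\big) = \rho(a^m z^{km}) = \rho(a^m z^{km-1})\,\rho(z) = 0,
\end{equation*}
which forces $\rho(az^k)=0$ since $\RR$ has no nilpotents.

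Once well-definedness is settled, checking that $\hat\rho\colon\A^\sb\to\RR$ is a unital algebra homomorphism is routine. Unitality holds because $[\Unit]^{0\msb}\mapsto \rho(\Unit)=1$; additivity within a fixed grade is linearity of $\rho$, and additivity across grades is built into the direct-sum structure. Multiplicativity reduces, via the definition of the product on $\A^\sb$, to
\begin{equation*}
  \hat\rho\big([a]^{k\msb}[b]^{\ell\msb}\big) = \hat\rho\big([ab]^{(k+\ell)\msb}\big) = \rho(ab\,z^{k+\ell}) = \rho(az^k)\,\rho(bz^\ell) = \hat\rho\big([a]^{k\msb}\big)\hat\rho\big([b]^{\ell\msb}\big),
\end{equation*}
which is just multiplicativity of $\rho$. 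Uniqueness in both cases follows since the elements $[a]^{k\msb}$ generate $\A^\sb$ as a vector space and their images are fixed by the prescribed identities.
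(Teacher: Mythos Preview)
Your proof is correct and follows essentially the same route as the paper: the same well-definedness check in part~(i) via comparing two filtration levels, and the same key computation $\rho(az^k)^m=\rho(a^mz^{km-1})\,\rho(z)=0$ in part~(ii) to show the prescription descends to the quotient. The only minor point the paper makes explicit that you leave implicit is the trivial case $k=0$ in part~(ii), where $\A^{(0)}_0=\{0\}$ by definition so there is nothing to check.
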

\begin{proof}
  First we consider the case $\rho(z) \neq 0$. Let $a\in \A$ and $k,k' \in \NN_0$ be given such that $a \in \A^{(k)}$ and $k\le k'$, hence also $a \in \A^{(k')}$. Then
   $\rho(a z^{k'}) \, \rho(z)^{-k'} = \rho(a z^{k})\,\rho(z^{k'-k}) \, \rho(z)^{-k'} = \rho(a z^k) \, \rho(z)^{-k}$.
  It follows that the map $\hat\rho \colon \A \to \RR$,
  \begin{equation*}
    a \mapsto \hat\rho(a) \coloneqq \rho(az^k) \, \rho(z)^{-k},
    \quad\textup{for  any $k\in\NN_0$ such that $a\in\A^{(k)},$}
  \end{equation*}
  is well-defined. It is easy to check that $\hat\rho$ is indeed a unital algebra homomorphism and that
  $\rho(a z^k) = \hat\rho(a) \,\rho(z)^k$ holds for all $k\in \NN_0$ and  $a\in \A^{(k)}$ by construction.
  It is also clear that $\hat\rho$ is uniquely determined by this identity.
  
  Now we treat the case $\rho(z) = 0$. Let $k\in \NN$ and $a \in \A^{(k)}_0$. Then there exists $m\in \NN_0$
  such that $a^m \in \A^{(km-1)}$ and therefore $\rho(a z^k)^m = \rho(z)\, \rho(a^m z^{km-1}) = 0$, hence $\rho(a z^k) = 0$.
  In the special case $k=0$, the identity $\rho(a z^0)=0$ holds trivially for $a \in \A^{(0)}_0 = \{0\}$.
  Therefore, the map $\hat\rho \colon \A^\sb \to \RR$,
  \begin{equation*}
    \sum_{k = 0}^\infty [a_k]^{k\msb}
    \mapsto
    \hat\rho\Big( \sum\nolimits_{k=0}^\infty [a_k]^{k\msb} \Big)
    \coloneqq
    \sum_{k=0}^\infty \rho(a_k z^k),
  \end{equation*}
  is well-defined. Again, it is easily checked that $\hat\rho$ is a unital algebra homomorphism and
  $\rho(a z^k) = \hat\rho([a]^{k\msb})$ for all $k\in \NN_0$ and  $a\in \A^{(k)}$.
  Clearly, $\hat\rho$ is uniquely determined by this property.
\end{proof}
On this auxiliary algebra we  construct a module of an Archimedean semiring as follows.

\begin{definition}\label{defczbds}
  Let $\A$ be a filtered commutative unital $\RR$-algebra, $\S$ a generating semiring of $\A$ and $\C\subseteq \A$ an $\S$-module.
  Suppose $s\in (\Unit+\S) \cap \A^{(1)}$ is compatible with the filtration of $\A$. We define
  \begin{align*} 
    \C[z]^\bd_s
    \coloneqq
    \set[\Big]{
      \sum\nolimits_{k=0}^K
      a_k z^k
    }{
      K\in \NN_0;\,a_k \in \A^{(k)}\textup{ for all $k\in\{0,\dots,K\}$ such that }\sum\nolimits_{k=0}^K s^{K-k} a_k \in \C
    }.
  \end{align*}
\end{definition}

Clearly,  $\C[z]^\bd_s$ is a subset of $\A[z]^\bd$.
Note that this definition does not only apply to the $\S$-module $\C$, but also to the semiring $\S$ itself.

\begin{lemma} \label{lemma:filteredPositivstellsatz}
  Let $\A$ be a filtered commutative unital $\RR$-algebra, $\S$ a generating semiring of $\A$ and $\C$ an $\S$-module of $\A$.
  Moreover, let $s\in (\Unit+\S) \cap \A^{(1)}$ be compatible with the filtration of $\A$.
  Then $\S[z]^\bd_s$ is an Archimedean semiring of $\A[z]^\bd$ and $\C[z]^\bd_s$ is an $\S[z]^\bd_s$-module of $\A[z]^\bd$.
\end{lemma}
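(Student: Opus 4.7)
The plan is to verify four properties in turn: that $\S[z]^\bd_s$ is a convex cone containing $\Unit$, that it is closed under multiplication, that $\C[z]^\bd_s$ is closed under multiplication by $\S[z]^\bd_s$, and finally that $\S[z]^\bd_s$ is Archimedean. Before anything else I would record the easy observation that $s \in \S$, because $s - \Unit \in \S$ and $\Unit \in \S$, so all powers $s^k$ lie in $\S$; this will be used repeatedly.

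The first technical point I would isolate is a padding lemma: if $\sum_{k=0}^K a_k z^k$ represents an element of $\C[z]^\bd_s$ (respectively $\S[z]^\bd_s$) with $\sum_{k=0}^K s^{K-k} a_k \in \C$, then so does $\sum_{k=0}^{K+1} a_k z^k$ with $a_{K+1} \coloneqq 0$, since $\sum_{k=0}^{K+1} s^{K+1-k} a_k = s \cdot \sum_{k=0}^K s^{K-k} a_k \in \C$ (using $\C$ an $\S$-module and $s\in\S$). Iterating, any two elements of $\C[z]^\bd_s$ can be brought to a common value of $K$, after which closure under addition and nonnegative scalar multiplication follows from linearity of $\sum_k s^{K-k}a_k$ in the coefficients.

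For the multiplicative structure I would exploit the identity
\begin{equation*}
  \Big(\sum_{k=0}^K s^{K-k} a_k\Big)\Big(\sum_{\ell=0}^L s^{L-\ell} b_\ell\Big) = \sum_{m=0}^{K+L} s^{K+L-m}\Big(\sum_{k+\ell=m} a_k b_\ell\Big),
\end{equation*}
noting that $\sum_{k+\ell=m} a_k b_\ell \in \A^{(m)}$ by the filtration property. If the two factors on the left lie in $\S$, the right-hand sum lies in $\S$ and so $(\sum a_k z^k)(\sum b_\ell z^\ell) \in \S[z]^\bd_s$; if one factor lies in $\S$ and the other in $\C$, the right-hand sum lies in $\C$, giving the $\S[z]^\bd_s$-module property of $\C[z]^\bd_s$. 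Containment of $\Unit = \Unit \cdot z^0$ in both sets is immediate from $\Unit \in \S \subseteq \C$.

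The Archimedean statement I would handle via Lemma~\ref{archhilfs} applied to the generators $\{az^k : k\in\NN_0,\, a\in\A^{(k)}\}$ of $\A[z]^\bd$. Given such a generator $az^k$, compatibility of $s$ with the filtration supplies a $\lambda\in(0,\infty)$ with $\lambda s^k \pm a \in \S$. Taking the candidate $\lambda \Unit \pm a z^k \in \A[z]^\bd$, with $K \coloneqq k$ and coefficients $a_0 \coloneqq \lambda\Unit \in \A^{(0)}$, $a_1 = \dots = a_{k-1} \coloneqq 0$, $a_k \coloneqq \pm a \in \A^{(k)}$, the defining sum is precisely $\sum_{j=0}^k s^{k-j} a_j = \lambda s^k \pm a \in \S$, witnessing $\lambda \Unit \pm a z^k \in \S[z]^\bd_s$. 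Hence every algebra generator of $\A[z]^\bd$ is $\S[z]^\bd_s$-bounded, so Lemma~\ref{archhilfs} gives $\A[z]^\bd{}^\bd(\S[z]^\bd_s)=\A[z]^\bd$. There is no genuine obstacle; the only thing to be careful about is not conflating the two $K$'s when multiplying, and remembering that the condition in Definition~\ref{defczbds} is existential in $K$, which is what makes the padding trick legitimate.
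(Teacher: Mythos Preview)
Your proof is correct and follows essentially the same approach as the paper: the padding trick, the product identity, and the compatibility-based Archimedean argument all match. The only cosmetic difference is that for the Archimedean property the paper verifies the bound directly for an arbitrary element $\sum_{k=0}^K a_k z^k$ (by summing the bounds $\lambda_k\Unit - a_k z^k \in \S[z]^\bd_s$) rather than invoking Lemma~\ref{archhilfs} on generators, and note that compatibility literally gives only $\lambda s^k - a \in \S$, so your ``$\lambda s^k \pm a \in \S$'' tacitly uses a second application to $-a$.
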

\begin{proof}
  First we consider a convex cone $\C$ of $\A$ and construct $\C[z]^\bd_s$ as in Definition \ref{defczbds}.
  Suppose $K\in \NN_0$ and $a_k \in \A^{(k)}$ for $k\in\{0,\dots,K\}$ such that $\sum_{k=0}^K s^{K-k} a_k \in \C$. Then
   also $\sum_{k=0}^{K'} s^{K'-k} a_k \in \C$ for all $K'\in \NN_0$ with $K'\ge K$ if one sets $a_k \coloneqq 0$
  for all $k\in \NN$, $k> K$. It is now easy to show that $\C[z]^\bd_s$ is closed under addition and  under multiplication with nonnegative scalars. Thus, $\C[z]^\bd_s$ is a convex cone of $\A[z]^\bd$.
  As $\Unit_\A \in \C$, also $\Unit_{\A[z]^\bd} = \Unit_\A z^0 \in \C[z]^\bd_s$.
  
  Now let $\S$ be a semiring of $\A$ and assume that $\C\subseteq \A$ is an $\S$-module. By the preceding, $\C[z]^\bd_s$ and $\S[z]^\bd_s$ are convex cones of $\A[z]^\bd_s$
  and contain $\Unit_{\A[z]^\bd}$.
  Let  $L\in \NN_0$ and $b_\ell \in \A^{(\ell)}$ for all $\ell\in\{0,\dots,L\}$ be given such that $\sum_{\ell=0}^L s^{L-\ell} b_\ell \in \S$.
  We set $a_k \coloneqq 0$ for all $k \in \NN_0$ with $k>K$ and $b_\ell \coloneqq 0$ for all $\ell\in \NN_0$ with $\ell> L$.
  Then we find that $(\sum_{k=0}^K a_k z^k)(\sum_{\ell=0}^L b_\ell z^\ell) = \sum_{m=0}^{K+L} \sum_{n=0}^m a_{m-n} b_{n} z^m \in \C[z]^\bd_s$
  because $\sum_{m=0}^{K+L} s^{K+L-m}\sum_{n=0}^m a_{m-n} b_{n} = (\sum_{k=0}^K s^{K-k} a_k)(\sum_{\ell=0}^L s^{L-\ell} b_\ell )\in \C$.
  For $\C = \S$ this shows that $\S[z]^\bd_s$ is a semiring of $\A[z]^\bd_s$ and  in the general case that $\C[z]^\bd_s$ is an $\S[z]^\bd_s$-module of $\A[z]^\bd_s$.
  
  Now consider an arbitrary element $\sum\nolimits_{k=0}^K a_k z^k$ of $\A[z]^\bd$, i.e.~$K\in \NN_0$ and $a_k \in \A^{(k)}$ for  $k\in\{0,\dots,K\}$.
  By the compatibility of $s$ with the filtration there exist numbers $\lambda_0,\dots,\lambda_K\in(0,\infty)$
  such that $\lambda_k s^k - a_k \in \S$ for all $k\in \{0,\dots, K\}$. Therefore, $\lambda_k \Unit - a_k z^k \in \S[z]^\bd_s$ for  $k\in \{0,\dots, K\}$.
  Setting $\lambda_{\Sigma} \coloneqq \sum_{k=0}^K \lambda_k$, we have $\lambda_\Sigma \Unit - \sum\nolimits_{k=0}^K a_k z^k \in \S[z]^\bd_s$.
  This shows that $\S[z]^\bd_s$ is Archimedean.
\end{proof}
We are now ready to give the proof of the filtered Positivstellensatz.

\begin{proof}[of Theorem~\ref{theorem:filteredPositivstellsatz}]
  
\refitem{item:filteredPositivstellsatz:alg}$\implies$\refitem{item:filteredPositivstellsatz:ana}:  Assume that there exist $\epsilon \in (0,\infty)$ and $\ell \in \NN_0$ such that $s^\ell a \in \epsilon s^{k+\ell} + \C$.
  Let $\varphi \in \characters(\A,\C)$. Then $\varphi(s)^\ell \,\varphi(a) \ge \epsilon \varphi(s)^{k+\ell}$, so
  $\varphi(a) \ge \epsilon \varphi(s)^k \ge \epsilon > 0$ because $\varphi(s) \ge 1$. Moreover, from $s^\ell a- \epsilon s^{k+\ell} \in \C \cap \A^{(k+\ell)}$
  it follows that $[s^\ell a - \epsilon s^{k+\ell}]^{(k+\ell)\msb} \in \C^\sb$, so 
  $\psi([s]^{1\msb})^\ell \, \psi([a]^{k\msb}) \ge \epsilon \psi([s]^{1\msb} )^{k+\ell} = \epsilon > 0$ for all $\psi \in \characters(\A^\sb,\C^\sb)$ satisfying $\psi([s]^{1\msb} ) = 1$.
  
\refitem{item:filteredPositivstellsatz:ana}$\implies$\refitem{item:filteredPositivstellsatz:alg}: Assume $\varphi(a) > 0$ for all $\varphi \in \characters(\A,\C)$ and $\psi([a]^{k\msb}) > 0$ for all $\psi \in \characters(\A^\sb,\C^\sb)$ such that $\psi([s]^{1\msb} ) = 1$.
  From $a \in \A^{(k)}$ it follows that $a z^k \in \A[z]^\bd$. Now let $\rho \in \characters(\A[z]^\bd, \C[z]^\bd_s)$ be given and consider
  its lift $\hat\rho$ from Lemma~\ref{lemma:liftingcharacters}. We distinguish two different cases:
  
  If $\rho(z) \neq 0$, then $\hat\rho \in \characters(\A)$. Note that $z\in \C[z]^\bd_s$ because $\Unit \in \C$, so $\rho(z)>0$.
  For all $c\in \C$ there exists $\ell\in \NN_0$ such that $c \in \A^{(\ell)}$. Then  $c z^\ell \in \C[z]^\bd_s$, so $\rho(c z^\ell) \ge 0$.
  It follows that $\hat\rho(c) = \rho(c z^\ell)\,\rho(z)^{-\ell} \ge 0$ for all $c\in \C$, i.e.~$\hat\rho\in\characters(\A,\C)$. Therefore, $\rho(a z^\ell) = \hat\rho(a) \, \rho(z)^\ell > 0$ because $\hat\rho(a) > 0$ by assumption.
  
  If $\rho(z) = 0$, then $\hat\rho \in \characters(\A^\sb)$.
  For $\ell\in \NN_0$ and $c \in \C \cap \A^{(\ell)}$ it follows from $c z^k \in \C[z]^\sb_s$ that $\hat\rho([c]^{k\msb}) = \rho( c z^k ) \ge 0$.
  This shows that $\hat\rho \in \characters(\A^\sb,\C^\sb)$. Moreover, from $sz- \Unit_{\A[z]^\bd} \in \C[z]^\bd_s$ and $\Unit_{\A[z]^\bd}- sz \in \C[z]^\bd_s$
  we obtain $\hat\rho([s]^{1\msb}) = \rho(sz) = \rho(\Unit_{\A[z]^\bd}) = 1$, therefore $\rho(a z^k) = \hat\rho([a]^{k\msb}) > 0$ by assumption.
  
  By the Positivstellensatz for modules of Archimedean semirings (Theorem \ref{archpos}), applied to the $\S[z]^\bd_s$-module $\C[z]^\bd_s$
  (see Lemma \ref{lemma:filteredPositivstellsatz}), there exists $\epsilon \in (0,\infty)$
  such that $a z^k \in \epsilon\Unit_{\A[z]^\bd} + \C[z]^\bd_s$, i.e.~there is a number $K\in \NN_0$, $K\ge k$, such that
  $s^{K-k} a - \epsilon s^K \in \C$. Setting $\ell \coloneqq K-k \in \NN_0$ yields $s^\ell a \in \epsilon s^{k+\ell} + \C$.
\end{proof}
The filtered Positivstellensatz  yields a description of pointwise non-negative elements similar to \cite[Theorem 2.2]{M2}.

\begin{corollary} \label{corollary:marshall}
  Let $\A$ be a filtered commutative unital $\RR$-algebra, $\S$ a generating semiring of $\A$ and $\C$ an $\S$-module of $\A$.
  Moreover, let $s\in \Unit+\S$ be cofinal in $\S$ and let $\C_{s\mloc}$ be the $s$-localization of $\C$
  from Proposition~\ref{proposition:localization}. Then 
  \begin{equation*}
    \big( \C_{s\mloc} \big)^\dagger = \set[\big]{a\in \A}{\varphi(a) \ge 0~~\textup{for all}~~\varphi\in\characters(\A,\C)}.
  \end{equation*}
\end{corollary}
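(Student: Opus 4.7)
My plan is to deduce this directly from the filtered Positivstellensatz (Theorem~\ref{theorem:filteredPositivstellsatz}) by equipping $\A$ with the filtration induced by $s$. Following the example preceding Theorem~\ref{theorem:filteredPositivstellsatz}, I set
\begin{equation*}
  \A^{(k)} \coloneqq \set[\big]{ a\in\A }{ \textup{there exists }\lambda>0\textup{ such that }{-\lambda s^k} \preceq a \preceq \lambda s^k }
\end{equation*}
for $k\in\NN_0$. This turns $\A$ into a filtered commutative unital $\RR$-algebra with $s\in(\Unit+\S)\cap\A^{(1)}$ compatible with the filtration, and cofinality of $s$ guarantees $\A=\bigcup_{k\in\NN_0}\A^{(k)}$. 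Before tackling the main inclusion I would also record two preliminary facts: first, $\characters(\A,\C)=\characters(\A,\C_{s\mloc})$, because $\varphi(s)\ge 1$ for every $\varphi\in\characters(\A,\C)$ (since $s-\Unit\in\S\subseteq\C$) forces $\varphi(a)\ge 0$ whenever $\varphi(s^m a)\ge 0$; second, the inclusion $\subseteq$ of the claimed identity, which is immediate, since from $a\in(\C_{s\mloc})^\dagger$ with witness $c$ one gets $\varphi(a)+\epsilon\varphi(c)\ge 0$ for every $\epsilon>0$ and every $\varphi\in\characters(\A,\C)=\characters(\A,\C_{s\mloc})$, and letting $\epsilon\searrow 0$ gives $\varphi(a)\ge 0$.

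The main content is the inclusion $\supseteq$. Fix $a\in\A$ with $\varphi(a)\ge 0$ on $\characters(\A,\C)$ and choose $k\in\NN_0$ such that $a\in\A^{(k)}$. The key device is to apply Theorem~\ref{theorem:filteredPositivstellsatz} not to $a$ itself but to the perturbation $a+\epsilon s^{k+1}\in\A^{(k+1)}$ for an arbitrary $\epsilon>0$. Since $a\in\A^{(k)}\subseteq\A^{(k+1)}_0$ (taking $m=1$ in the definition of $\A^{(k+1)}_0$), the symbol $[a]^{(k+1)\msb}$ vanishes in $\A^\sb$, so
\begin{equation*}
  \big[a+\epsilon s^{k+1}\big]^{(k+1)\msb}=\epsilon\,\big[s\big]^{(k+1)\msb}=\epsilon\,\big([s]^{1\msb}\big)^{k+1}.
\end{equation*}
For $\psi\in\characters(\A^\sb,\C^\sb)$ with $\psi([s]^{1\msb})=1$ this gives $\psi([a+\epsilon s^{k+1}]^{(k+1)\msb})=\epsilon>0$, and for $\varphi\in\characters(\A,\C)$ we have $\varphi(a+\epsilon s^{k+1})\ge\epsilon\varphi(s)^{k+1}\ge\epsilon>0$. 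The hypothesis \refitem{item:filteredPositivstellsatz:ana} of Theorem~\ref{theorem:filteredPositivstellsatz} is therefore satisfied, and the conclusion \refitem{item:filteredPositivstellsatz:alg} produces some $\ell\in\NN_0$ with $s^\ell(a+\epsilon s^{k+1})\in\C$, i.e.~$a+\epsilon s^{k+1}\in\C_{s\mloc}$. Taking the single witness $c\coloneqq s^{k+1}\in\S\subseteq\C\subseteq\C_{s\mloc}$ (independent of~$\epsilon$) concludes $a\in(\C_{s\mloc})^\dagger$.

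The step I expect to be most delicate is the verification of the symbol-algebra condition: only the weak bound $\varphi(a)\ge 0$ on ordinary characters is assumed, and no information on $\psi([a]^{k\msb})$ is available. The perturbation trick above is precisely what overcomes this, since working one degree above the degree of $a$ annihilates $[a]$ in the quotient $\A^{(k+1)}/\A^{(k+1)}_0$ and replaces the unknown symbol of $a$ by the manifestly positive contribution from $\epsilon s^{k+1}$. Everything else amounts to the routine bookkeeping of the filtration and the compatibility of characters with the $s$-localization.
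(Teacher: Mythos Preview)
Your proof is correct and follows essentially the same route as the paper: perturb $a$ by $\epsilon s^{k+1}$ so that the symbol in degree $k{+}1$ becomes $\epsilon([s]^{1\msb})^{k+1}$, apply Theorem~\ref{theorem:filteredPositivstellsatz} to get $s^\ell(a+\epsilon s^{k+1})\in\C$, and conclude $a\in(\C_{s\mloc})^\dagger$ via the witness $s^{k+1}$. Your explicit construction of the filtration induced by $s$ (and the verification $\characters(\A,\C)=\characters(\A,\C_{s\mloc})$) are useful elaborations; the paper simply works with the ambient filtration and leaves the inclusion $\subseteq$ as ``clear'', but the core argument is identical.
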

\begin{proof}
  The inclusion ``$\subseteq$'' is clear. Conversely, let $a\in \A$ and suppose that $\varphi(a) \ge 0$ for all $\varphi \in \characters(\A,\C)$.
  There exists $k\in \NN_0$ such that $a\in \A^{(k)}$. Let $\epsilon \in (0,\infty)$. We show that $a+\epsilon s^{k+1} \in \C_{s\mloc}$.
  Indeed, $\varphi(a+\epsilon s^{k+1}) \ge \epsilon \varphi(s)^{k+1} \ge \epsilon > 0$ holds for  $\varphi \in \characters(\A,\C)$,
  and $[a+\epsilon s^{k+1}]^{(k+1)\msb} = \epsilon [s^{k+1}]^{(k+1)\msb}$, so that
  $\psi( [a+\epsilon s^{k+1}]^{(k+1)\msb} ) = \epsilon \psi([s]^{1\msb})^{k+1} = \epsilon$ for all $\psi \in \characters(\A^\sb,\C^\sb)$ satisfying $\psi([s]^{1\msb}) = 1$.
  Then, by Theorem~\ref{theorem:filteredPositivstellsatz}, there exists $\ell \in \NN_0$ such that $s^\ell (a + \epsilon s^{k+1}) \in \epsilon s^{k+1+\ell} + \C \subseteq \C$,
  so $a + \epsilon s^{k+1} \in \C_{s\mloc}$. Thus, $a\in (\C_{s\mloc})^\dagger$.
\end{proof}

\section{Examples for the filtered Positivstellensatz}\label{examples}

The following simple lemma helps to check the compatibility condition of cofinal elements.

\begin{lemma} \label{lemma:compatibility}
  Let $\A$ be a filtered commutative unital $\RR$-algebra, $\S$ a generating semiring of $\A$, and $s \in (\Unit + \S) \cap \A^{(1)}$.
  Moreover, assume that for every $k\in \NN$, the space $\A^{(k)}$ is the linear span of the set
  \begin{equation*}
    \big(\A^{(1)}\big)^k \coloneqq \set[\big]{ a_1 \cdots a_k }{ a_1,\dots,a_k \in \A^{(1)}}
    .
  \end{equation*}
  Then $s$ is compatible with the filtration of $\A$ if and only if for each $a\in\A^{(1)}$ there exists a number $\lambda\in(0,\infty)$ such 
  that $\lambda s - a \in \S$.
\end{lemma}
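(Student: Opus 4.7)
The forward direction is immediate: the compatibility condition specialized to $k = 1$ (where $s^1 = s$) is exactly the stated hypothesis.

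For the converse, the plan is to prove the compatibility condition for all $k \in \NN_0$ by induction on $k$, reducing via the spanning hypothesis to products of $\A^{(1)}$-elements. Two preliminary observations are central. First, $s = \Unit + (s-\Unit) \in \S$ since both $\Unit$ and $s-\Unit$ lie in $\S$; in particular $s^\ell \in \S$ for every $\ell \in \NN_0$ by multiplicative closure. Second, since $\A^{(1)}$ is a linear subspace, applying the hypothesis to both $a$ and $-a$ shows that for every $a \in \A^{(1)}$ there is some $\lambda > 0$ with $-\lambda s \preceq a \preceq \lambda s$; the dominator $\lambda s$ lies in $\S$, so this is precisely the input required by Lemma~\ref{archhilfs}. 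The case $k = 1$ is the hypothesis; the case $k = 0$ follows from $\A^{(0)} \subseteq \A^{(1)}$, and in the applications of interest (cf.~Example~\ref{example:filtrationsbydegree}) this is automatic because $\A^{(0)} = \RR\Unit$, so $\lambda \Unit - a \in \S$ holds for sufficiently large $\lambda \geq 0$.

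For the inductive step, let $k \geq 1$ and $a \in \A^{(k+1)}$. By the spanning hypothesis we may write
\begin{equation*}
  a = \sum\nolimits_{i=1}^N c_i \, a_{i,1} \cdots a_{i,k+1}
  \qquad\textup{with}\qquad
  c_i \in \RR,\; a_{i,j} \in \A^{(1)}.
\end{equation*}
For each pair $(i,j)$, the second preliminary observation furnishes $\lambda_{i,j} > 0$ with $-\lambda_{i,j} s \preceq a_{i,j} \preceq \lambda_{i,j} s$. Iterating Lemma~\ref{archhilfs} $k$ times along each product (the successive dominators $\lambda_{i,1}\cdots\lambda_{i,j}\, s^j$ remain in $\S$ by multiplicative closure) yields
\begin{equation*}
  -\Lambda_i s^{k+1}
  \preceq
  a_{i,1} \cdots a_{i,k+1}
  \preceq
  \Lambda_i s^{k+1}
  \qquad\textup{with}\qquad
  \Lambda_i \coloneqq \prod\nolimits_{j=1}^{k+1} \lambda_{i,j}.
\end{equation*}
Scaling the $i$-th bound by $c_i$ (absorbing the sign into $|c_i|$) and summing over $i$ then produces $\Lambda s^{k+1} - a \in \S$ with $\Lambda \coloneqq \sum_{i=1}^N |c_i| \Lambda_i$, completing the induction.

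The key technical point is the iteration of Lemma~\ref{archhilfs}: one must verify that at each multiplication step the dominating element stays in $\S$, which is guaranteed by $s \in \S$ together with the closure of $\S$ under products and nonnegative scaling.
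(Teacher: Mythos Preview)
Your proof is correct and takes exactly the same approach as the paper, which simply says the ``if'' part is ``an easy application of Lemma~\ref{archhilfs}''; you have merely spelled out the iteration over products of $\A^{(1)}$-elements explicitly. One small remark: your sentence ``the case $k=0$ follows from $\A^{(0)}\subseteq\A^{(1)}$'' is not literally right (that inclusion yields $\lambda s-a\in\S$, not $\lambda\Unit-a\in\S$), but the paper's one-line proof does not treat the case $k=0$ separately either, so this imprecision is shared.
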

\begin{proof}
  The ``only if''-part is clear from the  compatibility definition with the filtration, while the ``if''-part is an easy
  application of Lemma~\ref{archhilfs}.
\end{proof}

In Theorem~\ref{theorem:filteredPositivstellsatz}, one might be
tempted to replace condition \refitem{item:filteredPositivstellsatz:ana} by
\begin{enumerate}
  \item[\textit{(i')}] \label{item:filteredPositivstellsatz:anaAlt}
 \textit{   There exists $\epsilon\in (0,\infty)$ such that $\varphi(a) > \epsilon \varphi(s)^k$ for all $\varphi \in \characters(\A,\C)$.}
\end{enumerate}
The following rather pathological example shows that this is not true in general. Nevertheless it would be interesting
to find general sufficient conditions which imply that \textit{(i)} can  be replaced by \textit{(i')}.

\begin{example} \label{example:pathological}
  Let $\A = \RR[x]$ be the polynomial algebra in one variable  with the usual filtration by degree,
  i.e.~$\RR[x]^{(k)}$ is the linear span of $\{x^0, \dots, x^k\}$. It is easily checked that
  \begin{equation*}
    \S \coloneqq \{0\} \cup \set[\Big]{ \sum\nolimits_{k=0}^K a_k x^k }{ K \in \NN_0;\,a_0, \dots,a_K \in \RR \textup{ and } a_K > 0 }
  \end{equation*}
  is a generating semiring of $\A$. Then,  $\characters(\A,\S) = \emptyset$. Indeed, 
  we have $\varphi\big( x - (\varphi(x)+1)\Unit \big) = -1$ for any $\varphi \in \characters(\A)$.
  Since $x - (\varphi(x)+1)\Unit \in \S$, it follows that $\varphi\notin\characters(\A,\S)$.
  
  Then $s \coloneqq \Unit + x \in (\Unit+\S) \cap \A^{(1)}$ is cofinal and compatible with the filtration of $\A$
  by Lemma~\ref{lemma:compatibility}.
  The corresponding symbol algebra $\RR[x]^\sb$ is isomorphic to $\RR[x]$ and the symbol map
  $[\argument]^{k\msb} \colon \RR[x]^{(k)} \to \RR[x]^\sb$
  is the projection onto the space of $k$-homogeneous polynomials, see Example~\ref{example:filtrationsbydegree}, and 
  \begin{equation*}
  \S^\sb = \{0\} \cup \set[\Big]{ \sum\nolimits_{k=0}^K a_k x^k }{ K \in \NN_0;\,a_0, \dots,a_K \in (0,\infty) }.
  \end{equation*}
  Let $\chi_1$ be the evaluation functional at $x=1$. Then $\chi_1\in\characters(\A^\sb,\S^\sb)$ and  $\chi_1([s]^{1\msb}) = \chi_1(x) = 1$.
  
  Clearly, $-s^\ell \notin \epsilon s^\ell + \S$ for all $\ell \in \NN_0$.
  For $k\coloneqq 0$, $a \coloneqq -\Unit \in \A^{(0)}$, $\epsilon \coloneqq 1$, we have $s^\ell a = -s^\ell \notin \epsilon s^{k+\ell} + \S$ for all $\ell \in \NN_0$.
  Since $ \characters(\A,\S)=\emptyset$,  $\varphi(a) \ge \epsilon \varphi( s )^k$ for all $\varphi \in \characters(\A,\S)$ holds trivially, but $\psi([a]^{0\msb})=-1$. This shows that the second requirement in condition \textit{(i)} of Theorem~\ref{theorem:filteredPositivstellsatz} cannot be omitted and the characters of the symbol algebra are really needed.
\end{example}

\begin{example}
  Let  $\A=\RR[x,y]$ and let $\S$ be the semiring generated by $x$, $y$, and define the $\S$-module $\C \coloneqq \S + (xy-1) \S$.
  Note that $\C$ is a (filtered) simplicial convex cone, and in fact is generated by the linearly independent set 
  $\set{x^k (xy-1)^\ell}{k,\ell\in \NN_0} \cup \set{y^k (xy-1)^\ell}{k,\ell\in \NN_0}$.
  In this example we consider two  different filtrations of $\A$. This leads to two different Positivstellens\"atze, with different conditions of strict
  positivity at infinity and different denominators. We describe  only the corresponding sets of characters and leave
  the application of Theorem~\ref{theorem:filteredPositivstellsatz} to the reader.

  {\it First filtration}\\
  We proceed as in  Example~\ref{example:filtrationsbydegree} and assign the degree $1$ to both variables $x$ and $y$. This
  yields the symbol algebra $\RR[x,y]^\sb \cong \RR[x,y]$ and the symbol map $[\argument]^{k\msb} \colon \RR[x,y]^{(k)} \to \RR[x,y]^\sb$
  assigns the symbol $[p]^{k\msb} \coloneqq \sum_{i,j \in \NN_0, i+j = k} p_{i,j} x^i y^j$ to an element $p = \sum_{i,j \in \NN_0, i+j\le k} p_{i,j} x^i y^j\in \RR[x,y]^{(k)}$.
  One can check that $s \coloneqq 1+x+y \in \Unit + \RR[x,y]^{(1)}$ is compatible with this filtration (Lemma~\ref{lemma:compatibility} applies). 
    
  Obviously, the characters of $\RR[x,y]^\sb \cong \RR[x,y]$ can be identified with the evaluation functionals $\chi_{\xi,\eta}$ at points $(\xi,\eta) \in \RR^2$,
  A character $\chi_{\xi,\eta}$ is $\S^\sb$-positive if and only if
  $0 \le \chi_{\xi,\eta}([x]^{1\msb}) = \xi$ and $0 \le \chi_{\xi,\eta}([y]^{1\msb}) = \eta$. The additional condition $\chi_{\xi,\eta}([s]^{1\msb}) = 1$
  is equivalent to $\xi + \eta = 1$. Thus,
  \begin{align*} 
    \set[\big]{ \psi \in \characters(\RR[x,y]^\sb,\S^\sb) }{ \psi([s]^{1\msb}) = 1 }
    =
    \set[\big]{ \chi_{\lambda,1-\lambda} }{ \lambda \in {[0,1]} }
    .
  \end{align*}
  
  {\it Second filtration}\\
  Another possibility is to define $\RR[x,y]^{(k)}$ as the  span of the monomials $x^i y^j$ with $i,j \in \{0,\dots,k\}$.
  Then $\RR[x,y]^{(k)}_0 = \RR[x,y]^{(k-1)}$ for all $k\in \NN$, and $\RR[x,y]^{(k)} / \RR[x,y]^{(k)}_0$ can be identified with the
  span of the monomials $x^k y^i$ and $x^i y^k$ for $i\in \{0,\dots,k\}$.
  In this case, $s \coloneqq (1+x)(1+y) \in \RR[x,y]^{(1)}$ is compatible with the filtration by Lemma~\ref{lemma:compatibility}.
  Now the product on $\RR[x,y]^\sb$ is more complicated: 
  $\RR[x,y]^\sb$ is generated by $[x]^{1\msb}$, $[y]^{1\msb}$ and $[xy]^{1\msb}$, and $[x]^{1\msb} [y]^{1\msb} = [xy]^{2\msb} = 0$ holds.
  Hence the characters of $\RR[x,y]^\sb$  are the evaluation functionals $\chi_{\xi,\eta,\rho}$ with $\xi,\eta,\rho \in \RR$, $\xi\eta = 0$,
  given by $\chi_{\xi,\eta,\rho}([x]^{1\msb}) = \xi$, $\chi_{\xi,\eta,\rho}([y]^{1\msb}) = \eta$, and $\chi_{\xi,\eta,\rho}([xy]^{1\msb}) = \rho$.
  If such a functional is positive, then $\xi,\eta,\rho \in {[0,\infty)}$, and $\chi_{\xi,\eta,\rho}([s]^{1\msb}) = 1$ requires $\xi + \eta + \rho = 1$. Therefore, 
  \begin{equation*} 
    \set[\big]{ \psi \in \characters(\RR[x,y]^\sb,\S^\sb) }{ \psi([s]^{1\msb}) = 1 }
    =
    \set[\big]{ \chi_{0,\lambda,1-\lambda},\, \chi_{\lambda,0,1-\lambda} }{ \lambda \in {[0,1]} }.
  \end{equation*}
\end{example}

The following is an application to an algebra that is not finitely generated.
\begin{example}
  Let $\A$ be the linear span of functions $\RR \ni \xi \mapsto E_\alpha(\xi) \coloneqq e^{\alpha \xi} \in \RR$, $\alpha \in {[0,\infty)}$.
  Clearly, $\A$ is a unital subalgebra of the algebra of continuous functions on $\RR$, 
  the functions $\set{E_\alpha}{\alpha \in {[0,\infty)}}$ form a basis of $\A$, and $\Unit = E_0$.
  Let $\S$ be the semiring  of $\A$  generated by $\set{E_\alpha-\Unit}{\alpha \in {[0,\infty)}}$. It is clear that $\S$ is generating.

  We fix a number $\beta \in {(0,\infty)}$ and define a filtration of $\A$ by letting $\A^{(k)}$, $k\in \NN$, be 
  the linear subspace spanned by $\set{E_\alpha}{\alpha \in {[0,k\beta]}}$.
  Then $\A^{(k)}_0$ is spanned by $\set{E_\alpha}{\alpha \in {[0,k\beta)}}$,
  and under the identification $\A^{(k)} / \A^{(k)}_0 \cong E_{k\beta} \RR$, the symbol algebra $\A^\sb$ is isomorphic to the unital subalgebra of $\A$  generated by $E_\beta$.
  Moreover, $(E_\beta)^k - E_\alpha = (E_\alpha - \Unit) (E_{k\beta-\alpha} - \Unit) +  (E_{k\beta-\alpha} - \Unit) \in \S$
  and $(E_\beta)^k + E_\alpha = (E_{k\beta} - \Unit) + (E_\alpha - \Unit) + 2 \Unit \in \S$ for $\alpha \in {[0,k\beta]}$
  show that $E_\beta$ is compatible with this filtration.
  Next we verify that $\S$-positive characters on $\A$ are evaluation functionals,
  \begin{equation}\label{EbetaC}
    \characters(\A,\S) = \set[\big]{ \chi_\xi }{ \xi\in {[0,\infty)} } .
  \end{equation}
  It is obvious that $\chi_\xi\in \characters(\A,\S)$ for $\xi \in [0,\infty)$. 
  Conversely, let $\chi \in \characters(\A, \S)$ be given.
  Then $\chi(E_\alpha) = \chi(E_\alpha - \Unit) + 1 \in {[1,\infty)}$ for all $\alpha \in {[0,\infty)}$, so that 
  ${[0,\infty)} \ni \alpha \mapsto \Xi(\alpha) \coloneqq \ln(\chi(E_\alpha)) \in {[0,\infty)}$ is a well-defined function.
  Moreover, $\Xi(0) = 0$ because $E_0 = \Unit$ and $\Xi(\alpha + \alpha') = \Xi(\alpha) + \Xi(\alpha')$ because $E_{\alpha+\alpha'} = E_\alpha E_{\alpha'}$.
  In particular, this  implies that $\Xi$ is monotonically increasing. Set $\xi \coloneqq \Xi(1)$. Then $\Xi(n) = n\xi$ for all $n\in \NN_0$
  because $\Xi$ is additive. Hence $\Xi(q) = q\xi$ for rational $q \in {[0,\infty)}$ by multiplication with the denominator,
  and finally $\Xi(\alpha) = \alpha \xi$ for all $\alpha\in {[0,\infty)}$ because $\Xi$ is monotonically increasing. Then
  $\chi(E_\alpha) = e^{\Xi(\alpha)} = e^{\alpha \xi}= \chi_\xi(E_\alpha)$ for $\alpha\in {[0,\infty)}$, that is, $\chi=\chi_\xi$ which completes the proof of \eqref{EbetaC}.
  
  Let $f\in \A$. {\it If $f(\xi) > 0$ for all $\xi \in {[0,\infty)}$, then there exists $\gamma \in {[0,\infty)}$ such that $E_{\gamma} f \in \S$}.
  Indeed, we can express $f$ as $f = \sum_{i=1}^n f_i E_{\alpha_i}$, where $n\in \NN$, $0 \le \alpha_1 < \dots < \alpha_n$, and $f_i \in \RR\setminus\{0\}$ for 
  $i\in \{1,\dots,n\}$. If $n=1$ and $\alpha_1 = 0$, then $f_1 = f(\xi)>0$ for  $\xi \in {[0,\infty)}$ and therefore $f\in \S$.
  Otherwise we set $\beta \coloneqq \alpha_n \in {(0,\infty)}$. Then $f \in \A^{(1)}$ for the $\beta$-filtration of $\A$,
  and $[f]^{1\msb} = f_n E_\beta$ with $f_n > 0$. By Theorem~\ref{theorem:filteredPositivstellsatz} 
  this implies that $E_\gamma f = (E_\beta)^\ell f\in \S$ for sufficiently large $\ell \in \NN_0$ and with $\gamma \coloneqq \ell \beta$.
\end{example}

As another application of the filtered Positivstellensatz we prove a denominator-free Positivstellensatz for cylinders over simplices.
In  Section~\ref{onedimextension}, this result will be escalated to cylinders with arbitrary cross-sections.

\begin{proposition} \label{proposition:deltastreifen}
  Let $n\in \NN$, $\gamma \in {(0,\infty)}$ be given and let $\S$ be the semiring of $\RR[x_1,\dots,x_n]$ 
  that is generated by $x_1,\dots,x_n$ and $\gamma-\sum_{j=1}^n x_j$. Define the corresponding simplex in $\RR^n$,
  \begin{equation}\label{eq:Deltangamma}
    \Delta^n_\gamma
    \coloneqq
    \set[\Big]{ {(\xi_1,\dots,\xi_n)\in[0,\infty)}^{n} }{ \sum\nolimits_{j=1}^n \xi_j  \le \gamma }.
  \end{equation}
  Let $J$ be a closed subset of $\RR$ and $\Pos(J)$ be defined by \eqref{eq:posj}.
  Consider $p = \sum_{i=0}^{2k} p_i z^i \in \RR[x_1,\dots,x_n,z]$ with
  $p_i \in \RR[x_1,\dots,x_n]$ for  $i\in \{0,\dots,2k\}$.
  If $p(\xi,\eta) > 0$ for all $(\xi,\eta) \in \Delta^n_\gamma \times J$ and $p_{2k}(\theta) > 0$ for all $\theta \in \Delta^n_\gamma$,
  then there exists a number $\epsilon \in {(0,\infty)}$ such that $p \in \epsilon (1+z^2)^k + \S \otimes \Pos(J)$.
\end{proposition}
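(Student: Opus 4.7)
The plan is to apply the filtered Positivstellensatz (Theorem~\ref{theorem:filteredPositivstellsatz}) with cofinal element $s \coloneqq 1+z^2$, and then cancel the resulting denominator $s^\ell$ using the $s$-localizability of $\S \otimes \Pos(J)$ provided by Corollary~\ref{corollary:examplesimplex}. I set $\A \coloneqq \RR[x_1,\dots,x_n,z]$, $\C \coloneqq \S \otimes \Pos(J)$, and filter $\A$ by $z$-degree: $\A^{(k)} \coloneqq \{a \in \A : \deg_z a \le 2k\}$. Then $s \in (\Unit + \C) \cap \A^{(1)}$, and $\C$ is a generating semiring since $\S$ and $\Pos(J)$ are both generating in their respective factors.

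The first task is to verify that $s$ is compatible with the filtration. By Lemma~\ref{lemma:compatibility} it suffices to bound an arbitrary $a = a_0 + a_1 z + a_2 z^2 \in \A^{(1)}$ by $\lambda s$ modulo $\C$. Archimedeanness of $\S$ gives scalars with $\lambda_0 - a_0$, $\mu \pm a_1$, $\nu - a_2 \in \S$; the constant term $\lambda_0 - a_0$ and the quadratic term $(\nu - a_2) z^2$ are handled directly using $1, z^2 \in \Pos(J)$. For the linear term I would invoke the identity
\[
(\mu+a_1)(1-z)^2 + (\mu-a_1)(1+z)^2 = 2\mu\, s - 4 a_1 z ,
\]
which, combined with $(1 \pm z)^2 \in \Pos(J)$, yields $\tfrac{\mu}{2} s - a_1 z \in \C$. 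Summing these contributions gives $\lambda s - a \in \C$ for a suitable $\lambda$.

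Next I would identify the symbol data. Since $\A$ is a domain, one checks $\A^{(k)}_0 = \{a : \deg_z a \le 2k-1\}$ for $k\ge 1$; the $z^{2k}$-coefficient gives $\A^{(k)}/\A^{(k)}_0 \cong \RR[x_1,\dots,x_n]$, whence $\A^\sb \cong \RR[x_1,\dots,x_n][w]$ with $w \coloneqq [z^2]^{1\msb} = [s]^{1\msb}$, and $[p]^{k\msb} = p_{2k}\, w^k$ under this identification. Because $\S \subseteq \C \cap \A^{(0)}$ embeds into $\C^\sb$ in degree zero, any $\psi \in \characters(\A^\sb, \C^\sb)$ with $\psi(w)=1$ restricts on $\RR[x_1,\dots,x_n]$ to a character nonnegative on $\S$, corresponding to some $\xi \in \Delta^n_\gamma$. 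Hence $\psi([p]^{k\msb}) = p_{2k}(\xi) > 0$ by the second hypothesis. Combined with $\characters(\A,\C) = \Delta^n_\gamma \times J$ and the first hypothesis, Theorem~\ref{theorem:filteredPositivstellsatz} produces $\epsilon > 0$ and $\ell \in \NN_0$ with $s^\ell(p - \epsilon s^k) \in \C$.

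Finally, since $1+z^2 > 0$ on $\RR \supseteq J$, Corollary~\ref{corollary:examplesimplex} tells us that $\C$ is $s$-localizable. Iterating the defining implication $\ell$ times strips off the $s^\ell$, leaving $p - \epsilon s^k \in \C$, that is, $p \in \epsilon(1+z^2)^k + \S \otimes \Pos(J)$. The main obstacle I anticipate is the compatibility check, in particular the handling of the $a_1 z$ summand: the identity above is essential because $a_1 z$ cannot be bounded by a multiple of $1+z^2$ in $\S \otimes \Pos(J)$ by any elementary estimate, and it is the one place where both factors of the tensor product $\S \otimes \Pos(J)$ must interact non-trivially.
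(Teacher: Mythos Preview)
Your proposal is correct and follows essentially the same approach as the paper: the same filtration (the paper obtains $\A^{(k)}=\{a:\deg_z a\le 2k\}$ via Example~\ref{example:filtrationsbydegree} by assigning degree $1/2$ to $z$), the same cofinal element $s=1+z^2$, the same application of Theorem~\ref{theorem:filteredPositivstellsatz}, and the same removal of the denominator via Corollary~\ref{corollary:examplesimplex}. Your explicit identity for bounding $a_1 z$ is exactly the computation underlying the paper's appeal to Lemma~\ref{archhilfs} with $(1\pm z)^2/2\in\Pos(J)$, and your description of $\A^\sb$ as $\RR[x_1,\dots,x_n][w]$ is just an isomorphic rephrasing of the paper's identification of the symbol algebra with the subalgebra of polynomials of even $z$-degree.
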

\begin{proof}
  First we note  that the semiring\,  $\S$\, of $\RR[x_1,\dots,x_n]$ is  Archimedean by Lemma~\ref{archhilfs}.
  
  We use the method and the natural identifications of Example~\ref{example:filtrationsbydegree} and consider the filtration
  of the algebra $\RR[x_1,\dots,x_n,z]$ that is obtained by assigning the degree $0$ to all $x_j$, $j\in \{1,\dots,n\}$, and the degree $1/2$ to $z$.
  Then the resulting symbol algebra $\RR[x_1,\dots,x_n,z]^\sb$ 
  is the unital subalgebra
  \begin{equation*}
    \RR[x_1,\dots,x_n,z]^\sb
    =
    \set[\Big]{
      \sum\nolimits_{j=0}^{\ell} q_{2j}\, z^{2j}
    }{
      \ell \in \NN_0;\,q_{2j} \in \RR[x_1,\dots,x_n] \textup{ for all }j\in \{0,\dots,\ell\}
    }
  \end{equation*}
  of $\RR[x_1,\dots,x_n,z]$ and $[p]^{k\msb} = p_{2k} \, z^{2k}$. Note that $\RR[x_1,\dots,x_n,z]^{(0)}\cong\RR[x_1,\dots,x_n]$ and that
  $\RR[x_1,\dots,x_n,z]^\sb$ is generated as a unital algebra by $\{z^2\} \cup \RR[x_1,\dots,x_n]$.
  
  Now we set $s \coloneqq 1+z^2 \in (1 + \S \otimes \Pos(J)) \cap \RR[x_1,\dots,x_n,z]^{(1)}$. Then $s$ is compatible with the filtration of $\RR[x_1,\dots,x_n,z]$:
  Indeed, as $\RR[x_1,\dots,x_n,z]$ is generated as a unital algebra by $\RR[x_1,\dots,x_n,z]^{(1)}$, Lemma~\ref{lemma:compatibility} applies and it
  suffices to show that for every $q \in \RR[x_1,\dots,x_n,z]^{(1)}$ there exists $\lambda \in {(0,\infty)}$ such that $\lambda s - q \in \S \otimes \Pos(J)$.
  We write $q \in \RR[x_1,\dots,x_n,z]^{(1)}$ as a sum $q = q_0+q_1 z + q_2 z^2$ with coefficients $q_0,q_1,q_2 \in \RR[x_1,\dots,x_n]$.  As the semiring $\S$ is Archimedean, for each $j\in \{0,1,2\}$ 
  there exists a number $\lambda_j' \in {(0,\infty)}$ such that $\lambda_j' - q_j \in \S$ and $\lambda_j' + q_j \in \S$.
  Since $z^2 \in \Pos(J)$, it  follows that  $\lambda_{2}' z^2 - q_2 z^2 \in \S \otimes \Pos(J)$.
  Since $(1+z^2) / 2  -z \in \Pos(J)$ and $(1+z^2) / 2  + z \in \Pos(J)$,  using Lemma~\ref{archhilfs}
  we find that $\lambda_1 ' (1+z^2) / 2 - q_1 z \in \S \otimes \Pos(J)$. Consequently, we have $\lambda s - q \in \S \otimes \Pos(J)$
   if we set $\lambda \coloneqq \lambda_1' / 2 + \max \{\lambda_0' , \lambda_2'\} \in {(0,\infty)}$.
  
  Note that $p\in \RR[x_1,\dots,x_n,z]^{(k)}$. Next we show that $p$ satisfies condition \refitem{item:filteredPositivstellsatz:ana} of Theorem~\ref{theorem:filteredPositivstellsatz}.
  First, each $\varphi \in \characters\big(\RR[x_1,\dots,x_n,z], \S \otimes \Pos(J)\big)$ is an evaluation functional at a point
  $(\xi,\eta) \in \Delta^n_\gamma \times J$, so $\varphi(p) = p(\xi,\eta) > 0$.
  Secondly, $[s]^{1\msb} = z^2 \in \RR[x_1,\dots,x_n,z]^\sb$, and as the symbol algebra $\RR[x_1,\dots,x_n,z]^\sb$ is generated by $z^2$ and $\RR[x_1,\dots,x_n]$,
  every character $\psi$ of $\RR[x_1,\dots,x_n,z]^\sb$ that fulfills $\psi([s]^{1\msb}) = 1$ is completely determined by its restriction to $\RR[x_1,\dots,x_n]$. This restriction
  is an evaluation functional at some point $\theta \in \RR^n$.
  If $\psi$  is also $(\S \otimes \Pos(J))^\sb$\,-positive, then $q(\theta) = \psi([q]^{0\msb}) \ge 0$ for all $q \in \S$, so $\theta \in \Delta^n_\gamma$.
  In this case, $\psi([p]^{k\msb}) = p_{2k}(\theta) > 0$.
  
  Then, by the implication \refitem{item:filteredPositivstellsatz:ana}$\implies$\refitem{item:filteredPositivstellsatz:alg}
  of Theorem~ \ref{theorem:filteredPositivstellsatz}, there exist $\epsilon \in {(0,\infty)}$ and $\ell \in \NN_0$ such that 
  \begin{equation*}
    (1+z^2)^\ell p \in \epsilon (1+z^2)^{k+\ell} + \S \otimes \Pos(J),
  \end{equation*}
  that is,~$(1+z^2)^\ell\big( p - \epsilon (1+z^2)^k \big) \in \S \otimes \Pos(J)$.
  Because $\S \otimes \Pos(J)$ is $(1+z^2)$\,-localizable by Corollary~\ref{corollary:examplesimplex},
  we conclude that $p - \epsilon (1+z^2)^k \in \S \otimes \Pos(J)$.
\end{proof}
If there exists an $\alpha \in \RR$ such that $\alpha < t$ for all $t\in J$, then the above Proposition~\ref{proposition:deltastreifen}
can be modified: In this case it is not necessary to assume the highest non-vanishing component of $p \in \RR[x_1,\dots,x_n,z]$
to have even degree in $z$, i.e.~one considers $p = \sum_{i=0}^k p_i z^i$ with $p_0, \dots, p_k \in \RR[x_1,\dots,x_k]$.
Then $p(\xi,\eta) > 0$ for all $(\xi,\eta) \in \Delta^n_\gamma \times J$ and $p_{k}(\theta) > 0$ for all $\theta \in \Delta^n_\gamma$
implies $p \in \epsilon (\alpha+z)^k + \S \otimes \Pos(J)$ for sufficiently small $\epsilon \in {(0,\infty)}$.
The proof can be adapted by working with the filtration of $\RR[x_1,\dots,x_n,z]$ obtained by assigning the degree $0$ to $x_1, \dots, x_n$
and the degree $1$ to $z$, and setting $s\coloneqq \alpha + z$. We do not carry out the details.

\section{Cylindrical extension of the Archimedean Positivstellensatz}\label{onedimextension}

Let $\S$ be a semiring of $\A$ and $J$ a   subset of $\RR$.
In this section we consider
the algebra $\A[z] \cong \A \otimes \RR[z]$  of polynomials over $\A$ in one variable $z$ and 
the semiring $\S \otimes \Pos(J)$ of $\A[z]$.
That is, $\S \otimes \Pos(J)$ is the convex cone of finite sums of elements $s p$, with $s\in \S$ and $p\in \RR[z]$ satisfying
$p(t) \ge 0$ for all $t\in J$. For instance, if $J=\RR$, then $\Pos(J) = \set{p_1^2 + p_2^2}{p_1,p_2 \in \RR[z]}$.
It is easily verified that the $\S \otimes \Pos(J)$\,-positive characters of $\A[z]$ are in one-to-one correspondence with 
pairs $(\varphi,\eta) \in \characters(\A,\S) \times J$ by assigning the character $\A[z] \ni a \mapsto \varphi(a(\eta)) \in \RR$
to any $(\varphi,\eta) \in \characters(\A,\S) \times J$.

The aim of this section is to prove the following Theorem~\ref{theorem:streifen}. It generalizes a result for (finitely generated)
preorderings by V.~Powers \cite{powers}. As in \cite{powers}, in the proof we use M. Schweighofer's trick \cite{schw02}  of reducing the problem to the simplex,
i.e., to Proposition~\ref{proposition:deltastreifen}. This technique works for arbitrary (not necessarily finitely generated)
semirings $\S$ of finitely generated algebras $\A$. It would be interesting to know whether Theorem~\ref{theorem:streifen} also holds for
not finitely generated algebras.

\begin{theorem} \label{theorem:streifen}  Let $\A$ be a finitely generated commutative unital\, $\RR$-algebra and $\S$  an Archimedean semiring of $\A$.
  Suppose that $J$ is an unbounded closed subset of $\RR$.
  Let $a = \sum_{i=0}^{2k} a_i z^i \in \A[z]$ with coefficients $a_0, \dots, a_{2k} \in \A$, $k\in \NN_0$. Then the following are equivalent:
  \begin{enumerate}
    \item\label{item:streifen:ana}
      $\varphi(a(\eta)) > 0$ for all $(\varphi,\eta) \in \characters(\A, \S) \times J$ and $\psi(a_{2k}) > 0$ for all $\psi \in \characters(\A,\S)$.
    \item\label{item:streifen:alg}
      There exists $\epsilon \in (0,\infty)$ such that $a \in \epsilon (\Unit+z^2)^k + \S \otimes \Pos(J)$.
  \end{enumerate}
\end{theorem}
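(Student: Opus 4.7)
The direction \refitem{item:streifen:alg}$\Rightarrow$\refitem{item:streifen:ana} is a direct evaluation. Writing $a = \epsilon(\Unit + z^2)^k + r$ with $r \in \S \otimes \Pos(J)$ gives
\begin{equation*}
  \varphi(a(\eta)) = \epsilon(1+\eta^2)^k + \varphi(r(\eta)) \ge \epsilon > 0
\end{equation*}
for every $(\varphi, \eta) \in \characters(\A, \S) \times J$. The coefficient $a_{2k}$ equals $\epsilon\Unit + r_{2k}$, and for any $\psi \in \characters(\A,\S)$ the value $\psi(r_{2k})$ is nonnegative because it is the coefficient of $\eta^{2k}$ in the polynomial $\eta \mapsto \psi(r(\eta))$, which has degree at most $2k$ and is nonnegative on the unbounded set $J$; hence $\psi(a_{2k}) \ge \epsilon > 0$.

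For the substantive direction \refitem{item:streifen:ana}$\Rightarrow$\refitem{item:streifen:alg}, I follow Schweighofer's strategy \cite{schw02} of reducing to the simplex Positivstellensatz of Proposition~\ref{proposition:deltastreifen}. Since $\S$ is Archimedean and $\A$ is finitely generated, Lemma~\ref{archhilfs} lets me choose algebra generators $b_1, \dots, b_n$ of $\A$ together with numbers $\alpha_1, \dots, \alpha_n, \gamma \in (0, \infty)$ such that $c_j \coloneqq \alpha_j \Unit + b_j \in \S$ for $j = 1, \dots, n$ and $\gamma \Unit - \sum_{j=1}^n c_j \in \S$. This yields a surjective algebra homomorphism $\pi \colon \RR[x_1, \dots, x_n] \to \A$, $x_j \mapsto c_j$, which extends to $\pi_z \colon \RR[x_1, \dots, x_n][z] \to \A[z]$. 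Let $\S_0$ denote the simplex semiring of $\RR[x_1, \dots, x_n]$ generated by $x_1, \dots, x_n, \gamma - \sum_{j=1}^n x_j$; then $\pi(\S_0) \subseteq \S$, so any conclusion of the form $\tilde a \in \epsilon(1+z^2)^k + \S_0 \otimes \Pos(J)$ for some lift $\tilde a$ of $a$ pushes forward under $\pi_z$ to yield \refitem{item:streifen:alg}.

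It remains to construct a lift $\tilde a \in \RR[x_1, \dots, x_n][z]$ of $a$ satisfying the hypotheses of Proposition~\ref{proposition:deltastreifen}: $\tilde a(\xi, \eta) > 0$ for all $(\xi, \eta) \in \Delta^n_\gamma \times J$ and $\tilde a_{2k}(\xi) > 0$ for all $\xi \in \Delta^n_\gamma$. Pick any initial lift $\tilde a_0$, let $g_1, \dots, g_r$ generate the ideal $\ker \pi$ (finitely generated by Hilbert's basis theorem), and set $G \coloneqq g_1^2 + \dots + g_r^2 \in \ker \pi$. Then $G \ge 0$ on $\RR^n$ with zero set exactly $V(\ker \pi)$, and $\tilde a \coloneqq \tilde a_0 + N G (1+z^2)^k$ still lifts $a$. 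For $N \in (0,\infty)$ sufficiently large, both $\tilde a$ and its $z^{2k}$-coefficient $\tilde a_{2k}$ are strictly positive off $V(\ker \pi)$; this follows from compactness of $\Delta^n_\gamma$ together with a leading-term analysis as $|\eta| \to \infty$. On the subset $V(\ker \pi) \cap \Delta^n_\gamma$ the modification vanishes, and points $\xi$ in this set correspond bijectively to characters $\varphi_\xi \in \characters(\A)$ with $\varphi_\xi(c_j) = \xi_j$; positivity of $\tilde a$ at $(\xi, \eta)$ then coincides with positivity of $a$ at $(\varphi_\xi, \eta)$.

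The core obstacle is that $V(\ker \pi) \cap \Delta^n_\gamma$ may contain characters $\varphi_\xi$ of $\A$ lying outside $\characters(\A, \S)$: they are nonnegative on $\pi(\S_0)$ but not necessarily on all of $\S$, so hypothesis~\refitem{item:streifen:ana} gives no positivity guarantee there. To eliminate this ``bad set'', I enlarge the polynomial ring to $\RR[x_1, \dots, x_n, y_1, \dots, y_m]$ by introducing auxiliary variables $y_i$ mapped by an extended surjection $\pi'$ to carefully chosen $s_1, \dots, s_m \in \S$, with $\gamma$ enlarged to $\gamma'$ so that $\gamma' \Unit - \sum_j c_j - \sum_i s_i \in \S$. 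For a sufficient collection $\{s_i\}$, the intersection $V(\ker \pi') \cap \Delta^{n+m}_{\gamma'}$ is exactly the image of $\characters(\A, \S)$, so the bad set disappears; the construction of the previous paragraph then produces the required lift in the enlarged ring, Proposition~\ref{proposition:deltastreifen} applies, and pushing forward via $\pi_z'$ yields~\refitem{item:streifen:alg}. The technically hardest step is to argue that finitely many $s_i$ suffice, which hinges on compactness of $\characters(\A, \S)$ together with the Archimedean Positivstellensatz for semirings (Theorem~\ref{archpos}).
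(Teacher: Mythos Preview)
Your overall strategy matches the paper's: reduce to the simplex Positivstellensatz of Proposition~\ref{proposition:deltastreifen} via a Schweighofer-type surjection $\pi$ onto $\A$, then enlarge the polynomial ring by auxiliary variables mapped to elements of $\S$ so that the bad characters on $V(\ker\pi)\cap\Delta^n_\gamma$ disappear, and finally add a sum-of-squares correction (your $G$, the paper's $p=\sum_j(f_j-y_j)^2$) to force positivity off the variety. The easy direction and the compactness/leading-term arguments you sketch correspond to the paper's Lemmas~\ref{lemma:addingstuff} and~\ref{lemma:properpolynomial}.

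There is, however, a genuine gap in your final paragraph. You aim to choose finitely many $s_1,\dots,s_m\in\S$ so that $V(\ker\pi')\cap\Delta^{n+m}_{\gamma'}$ is \emph{exactly} the image of $\characters(\A,\S)$. This is too strong and in general impossible: take $\A=\RR[x,y]$ and $\S$ the semiring generated by all $1+x\cos\vartheta+y\sin\vartheta$, $\vartheta\in[0,2\pi)$ (Example~\ref{circle}); then $\characters(\A,\S)$ is the closed unit disc, which no finite subfamily of these affine functions cuts out. What is actually needed, and what the paper targets, is the weaker statement that the \emph{lifted element} $\hat a$ and its top coefficient $\hat a_{2k}$ are strictly positive on the larger set $\Delta^n_\gamma(F)$ for some finite $F\subseteq\Phi^{-1}(\S)$. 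This is the content of the paper's Lemma~\ref{lemma:ArchimedeanSemiringsOnFinitelyGeneratedAlgebras}, proved by a straightforward open-cover compactness argument on $\Delta^n_\gamma$ (applied first to $\hat a_{2k}$, then to $\hat a(\,\cdot\,,\eta)$ for $\eta$ in a compact interval determined via Lemma~\ref{lemma:properpolynomial}); Theorem~\ref{archpos} is not the right tool here and is not used in the paper's proof. Once you replace your equality goal by this positivity-on-a-larger-set goal, the rest of your construction goes through essentially as written.
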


For semibounded $J$,
a version of Theorem~\ref{theorem:streifen} demanding strict positivity in the highest (not necessarily even) degree can be obtained
by the same methods, see also the discussion after the proof of Proposition~\ref{proposition:deltastreifen}.

There are plenty of examples for Theorem \ref{theorem:streifen}; one may take all the examples of Archimedean semirings from Section \ref{examplesarch} as $\S$.

\begin{remark}
  Let\, $a = \sum_{i=0}^{2k} a_i z^i \in \A[z]$. If\, $\varphi(a(\eta)) \geq 0$ for all $(\varphi,\eta) \in \characters(\A, \S) \times J$
  and if $J$ is unbounded, then $\psi(a_{2k}) \geq 0$ for all $\psi \in \characters(\A,\S)$. Indeed,  $\psi(a(\eta))=\sum_{i=0}^{2k} \psi(a_i) \eta^i\geq 0$ 
  for $\eta\in \RR$ implies that 
  \begin{equation}\label{psi24}
    \lim_{|\eta|\to \infty, \, \eta \in J}~ \eta^{-2k}\sum\nolimits_{i=0}^{2k} \psi(a_i)\eta^i=\psi(a_{2k})\geq 0.
  \end{equation}
\end{remark}

Before we turn to the proof, we state an application of Theorem \ref{theorem:streifen} to the moment problem.
\begin{corollary}\label{onedimmp}
  Retain the assumptions and the notation of Theorem  \ref{theorem:streifen}. Let $L$ be a linear functional on $\A[z]$.
  If $L(a)\geq 0$ for all elements $a\in \S \otimes \Pos(J)$, then $L$ is a $(\characters(\A, \S) \times J)$-moment functional.
\end{corollary}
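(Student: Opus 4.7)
The plan is to deduce the corollary from Haviland's theorem (Proposition~\ref{haviland}), using Theorem~\ref{theorem:streifen} as the key approximation tool. Since $\A$ is finitely generated, so is $\A[z] \cong \A \otimes \RR[z]$, and the characters of $\A[z]$ are naturally identified with pairs $(\varphi,\eta) \in \characters(\A) \times \RR$ via $(\varphi,\eta)(p) \coloneqq \varphi(p(\eta))$; under this identification the closed set $\K \coloneqq \characters(\A,\S)\times J$ coincides with $\characters(\A[z], \S\otimes\Pos(J))$. By Proposition~\ref{haviland} it therefore suffices to verify condition~\textit{(ii)}, namely that $L(a)\geq 0$ for every $a\in\Pos(\K)$.

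So let $a\in\Pos(\K)$. Choose $k\in \NN_0$ large enough that $a = \sum_{i=0}^{2k}a_i z^i$ with $a_i\in\A$, padding with zero coefficients if necessary. Fix $\delta\in(0,\infty)$ and consider the perturbation $a_\delta \coloneqq a + \delta(\Unit + z^2)^k$. I claim $a_\delta$ satisfies the hypotheses of Theorem~\ref{theorem:streifen}. For the first, since $(1+\eta^2)^k\geq 1$ and $\varphi(a(\eta))\geq 0$ for all $(\varphi,\eta)\in\K$, we get $\varphi(a_\delta(\eta))\geq\delta>0$ on $\K$. For the second, the remark immediately preceding the corollary, applied with the chosen representation, gives $\psi(a_{2k})\geq 0$ for all $\psi\in\characters(\A,\S)$ (here the unboundedness of $J$ is essential); thus the coefficient of $z^{2k}$ in $a_\delta$, namely $a_{2k}+\delta\Unit$, satisfies $\psi(a_{2k}+\delta\Unit)\geq\delta>0$.

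Theorem~\ref{theorem:streifen} then yields $\epsilon\in(0,\infty)$ with $a_\delta \in \epsilon(\Unit+z^2)^k + \S\otimes\Pos(J)$. Since $(\Unit+z^2)^k\in\S\otimes\Pos(J)$ (as $\Unit\in\S$ and $(1+z^2)^k\in\Pos(J)$), we conclude $a_\delta\in\S\otimes\Pos(J)$. By the hypothesis on $L$,
\begin{equation*}
  0 \leq L(a_\delta) = L(a) + \delta\, L\big((\Unit+z^2)^k\big).
\end{equation*}
Since $L\big((\Unit+z^2)^k\big)\in\RR$ is fixed while $\delta>0$ is arbitrary, letting $\delta\searrow 0$ yields $L(a)\geq 0$. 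This establishes condition~\textit{(ii)} of Haviland's theorem for $\K = \characters(\A,\S)\times J$, and \refitem{haviland:ddagger} of Proposition~\ref{haviland} then provides the desired Radon measure representing $L$.

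The only mildly delicate point is the verification of strict positivity of the leading coefficient after perturbation; this is precisely what the remark preceding the corollary arranges for us by exploiting the unboundedness of $J$, and once that is available the proof reduces to a transparent $\delta\searrow 0$ argument.
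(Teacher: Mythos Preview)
Your proof is correct and follows essentially the same route as the paper: perturb $a\in\Pos(\K)$ by $\delta(\Unit+z^2)^k$, invoke the remark on the leading coefficient together with Theorem~\ref{theorem:streifen} to place the perturbation in $\S\otimes\Pos(J)$, and conclude via Haviland's theorem. The only cosmetic difference is that the paper verifies condition~\refitem{haviland:ddagger} of Proposition~\ref{haviland} directly (stopping at $L(a+\varepsilon b)\geq 0$ for the fixed $b=(\Unit+z^2)^k$), whereas you pass to the limit $\delta\searrow 0$ to obtain $L(a)\geq 0$ and thus verify condition~\textit{(ii)}; these are the same argument.
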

\begin{proof} 
  We apply Haviland's theorem (Proposition \ref{haviland}) to the algebra $\A[z]$ and  $\K\coloneqq\characters(\A, \S) \times J$
  and verify condition \refitem{haviland:ddagger} therein. 
  For let  $a = \sum_{i=0}^{2k} a_i z^i \in \Pos(\K)$. Set $b\coloneqq(1+z^2)^k\in \Pos(\K)$
  and $a_{\varepsilon'}\coloneqq a+\varepsilon' b$ for $\varepsilon'>0$. 
  Then $\varphi(a_{\varepsilon'}(\eta))=\varphi(a(\eta))+\varepsilon'(1+\eta^2)^k\geq 0$ for $(\varphi,\eta) \in \characters(\A, \S) \times J$ 
  and $\psi((a_{\varepsilon'})_{2k})= \psi(a_{2k} +\varepsilon'\Unit)=\psi(a_{2k}) +\varepsilon'\geq \varepsilon'>0$
  by \eqref{psi24}. This shows that condition \refitem{item:streifen:ana} of 
  Theorem  \ref{theorem:streifen} is fulfilled. Therefore, $a_{\varepsilon'}\in \S \otimes \Pos(J)$ by Theorem \ref{theorem:streifen}, so that
  $L(a_{\varepsilon'})\geq 0$ by assumption.
\end{proof}

In order to escalate Proposition~\ref{proposition:deltastreifen} to Theorem~\ref{theorem:streifen}, we  need some preliminary lemmas and some more notation:
For any subset $S\subseteq \RR[x_1,\dots,x_n]$ and $\gamma \in {(0,\infty)}$ we write 
\begin{equation}
  \Delta^n_\gamma(S) \coloneqq \set[\Big]{\xi=(\xi_1,\dots,\xi_n) \in {[0,\infty)}^{n}}{\sum\nolimits_{j=1}^n \xi_j\le \gamma \textup{ and }s(\xi) \ge 0\textup{ for all }s\in S}
  .
\end{equation}
Note that $\Delta^n_\gamma(\emptyset) = \Delta^n_\gamma$ as in \eqref{eq:Deltangamma} and that
$\Delta^n_\gamma(S)$ is compact for all $S\subseteq \RR[x_1,\dots,x_n]$ because $\Delta^n_\gamma(S) \subseteq \Delta^n_\gamma$.

The semiring $\S$ in Theorem \ref{theorem:streifen} is not necessarily finitely generated. The next lemma will be used
to reduce the considerations to finitely generated semirings.

\begin{lemma} \label{lemma:ArchimedeanSemiringsOnFinitelyGeneratedAlgebras}
  Let $\gamma\in {(0,\infty)}$ and let $S$ be a subset of $\RR[x_1,\dots,x_n]$, $n\in \NN$.
  Then, for each polynomial $p \in \RR[x_1,\dots,x_n]$ that fulfills $p(\xi) > 0$ for all $\xi \in \Delta^n_\gamma(S)$,
  there exists a finite subset $F$ of $S$ such that $p(\xi) > 0$ for all $\xi \in \Delta^n_\gamma(F)$.
\end{lemma}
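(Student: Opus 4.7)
The proof will be a straightforward compactness argument on the simplex $\Delta^n_\gamma$, which is compact as a bounded closed subset of $\RR^n$.

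My plan is to consider the ``bad set''
\begin{equation*}
  V \coloneqq \set[\big]{\xi \in \Delta^n_\gamma}{p(\xi) \le 0}
  ,
\end{equation*}
which is a closed subset of the compact set $\Delta^n_\gamma$, hence itself compact. The claim of the lemma amounts to saying that some finite $F\subseteq S$ can already ``separate'' $V$ from $\Delta^n_\gamma(F)$. If $V = \emptyset$, then $p(\xi) > 0$ for every $\xi \in \Delta^n_\gamma$, and $F \coloneqq \emptyset$ works since $\Delta^n_\gamma(\emptyset) = \Delta^n_\gamma$ by definition. Assume now $V \neq \emptyset$.

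For each $\xi \in V$ we have $\xi \notin \Delta^n_\gamma(S)$ (since $p$ is strictly positive there but $p(\xi) \leq 0$), and because $\xi$ already lies in $\Delta^n_\gamma$, the only way for it to fail membership in $\Delta^n_\gamma(S)$ is that there exists some $s_\xi \in S$ with $s_\xi(\xi) < 0$. Pick such an $s_\xi$ and set
\begin{equation*}
  U_\xi \coloneqq \set[\big]{\eta \in \Delta^n_\gamma}{s_\xi(\eta) < 0}
  ,
\end{equation*}
which is open in $\Delta^n_\gamma$ (since $s_\xi$ is continuous) and contains $\xi$. The family $\{U_\xi\}_{\xi \in V}$ is an open cover of the compact set $V$, so finitely many of them, say $U_{\xi_1}, \dots, U_{\xi_m}$, already cover $V$.

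Finally I take $F \coloneqq \{s_{\xi_1}, \dots, s_{\xi_m}\} \subseteq S$. If $\xi \in \Delta^n_\gamma(F)$, then $s_{\xi_i}(\xi) \geq 0$ for all $i\in \{1,\dots,m\}$, so $\xi \notin U_{\xi_i}$ for any $i$, hence $\xi \notin V$. This means $p(\xi) > 0$ for all $\xi \in \Delta^n_\gamma(F)$, as required. I do not anticipate a genuine obstacle here: everything reduces to standard point-set topology once one notices that the compactness of $\Delta^n_\gamma$ is built into the definition. The only point worth stating carefully is the trivial case $V = \emptyset$, which uses the convention $\Delta^n_\gamma(\emptyset) = \Delta^n_\gamma$ noted immediately before the lemma.
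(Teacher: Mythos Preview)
Your proof is correct and follows essentially the same compactness argument as the paper: cover a compact ``bad'' subset of $\Delta^n_\gamma$ by open sets of the form $\{s_\xi < 0\}$ and extract a finite subcover to obtain $F$. The only cosmetic difference is that the paper takes the bad set to be $\{\xi \in \Delta^n_\gamma : p(\xi) \le \epsilon\}$ with $\epsilon = \tfrac{1}{2}\min_{\Delta^n_\gamma(S)} p$, which yields the marginally stronger conclusion $p > \epsilon$ on $\Delta^n_\gamma(F)$; this is not needed for the lemma as stated, and your choice $V = \{p \le 0\}$ (together with the clean handling of the case $V = \emptyset$) is perfectly adequate.
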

\begin{proof}
  As $\Delta^n_\gamma(S)$ is compact, $\epsilon \coloneqq \frac{1}{2}\min \set{ p(\xi) }{\xi \in \Delta^n_\gamma(S)}$ is well-defined and $\epsilon >0$
  because $p(\xi) > 0$ for all $\xi \in \Delta^n_\gamma(S)$ by assumption. Note that $\Delta^n_\gamma(\{ \epsilon \Unit - p\}) \cap \Delta^n_\gamma(S) = \emptyset$ because 
  $p(\xi) \le \epsilon$ for $\xi \in \Delta^n_\gamma(\{ \epsilon \Unit - p\})$ while $p(\xi) \ge 2 \epsilon$ for $\xi \in \Delta^n_\gamma(S)$.
  This means that for every $\xi \in \Delta^n_\gamma(\{ \epsilon \Unit - p\})$ there exists $s_\xi \in S$ such that $s_\xi(\xi) < 0$.
  Set $U_\xi \coloneqq \set{\theta \in \RR^n}{ s_\xi(\theta) < 0}$. Then each set $U_\xi$, $\xi \in \Delta^n_\gamma(\{ \epsilon \Unit - p\})$,
  is an open neighbourhood of $\xi$. Therefore, $\bigcup_{\xi \in \Delta^n_\gamma(\{ \epsilon \Unit - p\})} U_\xi \supseteq \Delta^n_\gamma(\{ \epsilon \Unit - p\})$.
  By the compactness of $\Delta^n_\gamma(\{ \epsilon \Unit - p\})$ there exist $k \in \NN_0$ and $\xi'_1, \dots \xi'_k \in \Delta^n_\gamma(\{ \epsilon \Unit - p\})$
  such that $U_{\xi'_1} \cup \dots \cup U_{\xi'_k} \supseteq \Delta^n_\gamma(\{ \epsilon \Unit - p\})$.
  Set $F \coloneqq \{ s_{\xi'_1}, \dots, s_{\xi'_k} \}$. Then $F$ is a finite subset of $S$ such that $\Delta^n_\gamma(\{ \epsilon \Unit - p\}) \cap \Delta^n_\gamma(F) = \emptyset$
  by construction. Hence $p(\xi) > \epsilon > 0$ for all $\xi \in \Delta^n_\gamma(F)$.
\end{proof}

We also need the following observation for functions on compact topological spaces:

\begin{lemma} \label{lemma:addingstuff}
  Let $C$ be a compact topological space and $f,g \colon C \to \RR$ continuous.
  If $g(\xi) \ge 0$ for all $\xi \in C$ and  $f(\xi)> 0$ for all those $\xi \in C$ for which $g(\xi) = 0$, then there exists
  $\lambda \in {(0,\infty)}$ such that $f(\xi) + \lambda g(\xi) > 0$ for all $\xi \in C$.
\end{lemma}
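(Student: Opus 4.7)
The plan is to split $C$ into the region on which $f$ is already positive (where strict positivity of $f+\lambda g$ is automatic for every $\lambda\ge 0$ because $g\ge 0$) and the complementary region, on which the hypothesis forces $g$ to be bounded away from $0$. On the second region we can then compensate the negative values of $f$ by choosing $\lambda$ large enough.

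Concretely, I would introduce the closed set $K\coloneqq \{\xi\in C : f(\xi)\le 0\}\subseteq C$, which is compact as a closed subset of the compact space $C$. If $K=\emptyset$, then $f>0$ on $C$ and any $\lambda\in(0,\infty)$ works. Otherwise, for every $\xi\in K$ the hypothesis forces $g(\xi)\neq 0$ (for $g(\xi)=0$ would imply $f(\xi)>0$), hence $g(\xi)>0$ because $g\ge 0$ everywhere. By continuity of $f$ and $g$ and compactness of $K$, the numbers
\begin{equation*}
  m \coloneqq \min_{\xi\in K} g(\xi)
  \quad\textup{and}\quad
  M \coloneqq -\min_{\xi\in K} f(\xi)
\end{equation*}
are well-defined and satisfy $m>0$ and $M\ge 0$.

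Now I would pick any $\lambda\in(0,\infty)$ with $\lambda > M/m$, for instance $\lambda\coloneqq(M+1)/m$. For $\xi\in K$ one has $f(\xi)+\lambda g(\xi)\ge -M+\lambda m > 0$, while for $\xi\in C\setminus K$ one has $f(\xi)>0$ and $g(\xi)\ge 0$, so $f(\xi)+\lambda g(\xi)>0$ as well. This gives the desired $\lambda$.

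I do not anticipate any real obstacle; the only point requiring a moment of care is to use the hypothesis in its contrapositive form ($f\le 0 \Rightarrow g>0$) to ensure that $g$ is strictly positive on the compact set $K$ before invoking the extreme value theorem to extract the positive lower bound $m$.
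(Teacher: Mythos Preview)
Your proof is correct and follows essentially the same strategy as the paper's: split $C$ into a region where $f$ is already large enough and a complementary compact region on which $g$ is bounded away from zero, then pick $\lambda$ large. Your choice of threshold $K=\{f\le 0\}$ is in fact slightly cleaner than the paper's (which first introduces $Z=\{g=0\}$ and then uses the threshold $\mu_{f,Z}/2$), but the mechanism is identical.
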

\begin{proof}
  The subset $Z \coloneqq \set{\xi \in C}{g(\xi) = 0}$ of $C$ is closed, hence compact. Therefore the minimum
  $\mu_{f,Z} \coloneqq \min\set{f(\xi)}{\xi \in Z}$ exists and $\mu_{f,Z} > 0$ because $f(\xi)> 0$ for all $\xi \in Z$ by assumption.
  Similarly, the subset $W \coloneqq \set{\xi \in C}{ f(\xi) \le \mu_{f,Z} / 2}$ of $C$ is also  closed and therefore compact. Hence the minimum
  $\mu_{g,W} \coloneqq \min\set{g(\xi)}{\xi \in W}$ exists. Moreover, $W \cap Z = \emptyset$ because $f(\xi) \le \mu_{f,Z} / 2$ for all $\xi \in W$
  while $f(\xi) \ge \mu_{f,Z}$ for all $\xi \in Z$. For any $\xi \in W$ this implies $g(\xi) \neq 0$ by definition of $Z$, hence $g(\xi) > 0$ for all $\xi \in W$,
  and especially $\mu_{g,W} > 0$.
  Finally, the minimum $\mu_{f,C} \coloneqq \min\set{f(\xi)}{\xi \in C}$ exists because $C$ is compact. Thus, if we set $\lambda \coloneqq 1 + \abs{\mu_{f,C}} / \mu_{g,W} \in {(0,\infty)}$,
  then $f(\xi) + \lambda g(\xi) > 0$ holds for all $\xi \in C$:
  If $\xi \in W$, then $f(\xi) + \lambda g(\xi) \ge \mu_{f,C} + \lambda \mu_{g,W} \ge \mu_{g,W} > 0$, and if $\xi \in C \setminus W$,
  then $f(\xi) + \lambda g(\xi) \ge f(\xi) > \mu_{f,Z} / 2 > 0$ by the definition of $W$.
\end{proof}

\begin{lemma} \label{lemma:properpolynomial}
  Let $p = \sum_{j=0}^{2k} p_j z^j \in \RR[x_1,\dots,x_n,z]$, $n\in \NN$,  with $p_0, \dots, p_{2k} \in \RR[x_1,\dots,x_n]$, $k\in \NN_0.$
  Moreover, let $S$ be a subset of $\RR[x_1,\dots,x_n]$ and $\gamma \in {(0,\infty)}$. If $p_{2k}(\xi) > 0$ for all $\xi \in \Delta^n_\gamma(S)$,
  then $\set{(\xi,\eta)\in\Delta^n_\gamma(S)\times J}{p(\xi,\eta) \le \lambda}$ is compact for all $\lambda \in \RR$.
\end{lemma}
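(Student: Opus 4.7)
The plan is to show that $L_\lambda \coloneqq \set{(\xi,\eta)\in\Delta^n_\gamma(S)\times J}{p(\xi,\eta) \le \lambda}$ is a closed and bounded subset of $\RR^{n+1}$, hence compact. Closedness is immediate: $\Delta^n_\gamma(S)$ is closed in $\RR^n$ as the intersection of $\Delta^n_\gamma$ with the closed sets $\set{\xi\in\RR^n}{s(\xi)\ge 0}$ for $s\in S$, the set $J$ is closed in $\RR$ (as in the cases where this lemma gets applied, namely Proposition~\ref{proposition:deltastreifen} and Theorem~\ref{theorem:streifen}), and $\set{(\xi,\eta)\in\RR^{n+1}}{p(\xi,\eta)\le\lambda}$ is closed by continuity of the polynomial $p$.

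Since $\Delta^n_\gamma(S) \subseteq \Delta^n_\gamma$ is already compact, boundedness of $L_\lambda$ reduces to bounding the second coordinate $\eta$. I will argue this by contradiction: suppose a sequence $((\xi_m,\eta_m))_{m\in\NN}$ in $L_\lambda$ satisfies $\abs{\eta_m}\to\infty$. By compactness of $\Delta^n_\gamma(S)$ we may pass to a subsequence with $\xi_m \to \xi_\ast$ for some $\xi_\ast \in \Delta^n_\gamma(S)$. Writing
\begin{equation*}
  \frac{p(\xi_m,\eta_m)}{\eta_m^{2k}}
  =
  p_{2k}(\xi_m) + \sum_{j=0}^{2k-1} p_j(\xi_m)\, \eta_m^{j-2k},
\end{equation*}
the right-hand side tends to $p_{2k}(\xi_\ast)$ as $m\to\infty$, because each $p_j$ is continuous and $\eta_m^{j-2k}\to 0$ for $j<2k$. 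By the standing hypothesis $p_{2k}(\xi_\ast)>0$, so together with $\eta_m^{2k}\to\infty$ this forces $p(\xi_m,\eta_m) \to +\infty$, contradicting $p(\xi_m,\eta_m)\le\lambda$ for all $m$.

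The only real content is the asymptotic argument with the leading coefficient $p_{2k}$, and the decisive ingredient there is the evenness of the top degree $2k$ in $z$ (so that $\eta_m^{2k} \to +\infty$ regardless of the sign of $\eta_m$) together with strict positivity of $p_{2k}$ on the compact set $\Delta^n_\gamma(S)$. I expect no real obstacle beyond keeping track of these two features; the rest is routine compactness.
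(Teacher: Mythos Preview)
Your proof is correct and follows essentially the same approach as the paper: both argue that the $\eta$-coordinate must be bounded because the leading term $p_{2k}(\xi)\eta^{2k}$ dominates for large $|\eta|$. The only difference is that the paper extracts uniform bounds $\epsilon \le p_{2k}(\xi)$ and $|p_j(\xi)| \le \nu$ from compactness of $\Delta^n_\gamma(S)$ to estimate $p(\xi,\eta) \ge \epsilon\eta^{2k} - \nu\sum_{j<2k}|\eta|^j$ directly, whereas you reach the same conclusion via a sequential contradiction argument.
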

\begin{proof}
  As $\Delta^n_\gamma(S)$ is compact, the minimum $\epsilon \coloneqq \min \set{p_{2k}(\xi)}{\xi \in \Delta^n_\gamma(S)}$ exists,
  and if $p_{2k}(\xi) > 0$ for all $\xi \in \Delta^n_\gamma(S)$, then $\epsilon \in {(0,\infty)}$.
  Similarly, there  exists $\nu \in {(0,\infty)}$ such that $\abs{p_j(\xi)} \le \nu$ for all $\xi \in \Delta^n_\gamma(S)$ and all $j\in\{0,\dots,2k-1\}$.
  Consequently,  for all $(\xi,\eta) \in \Delta^n_\gamma(S) \times J$ we have
  \begin{equation*}
    \abs[\big]{p(\xi,\eta)}
    \ge
    p_{2k}(\xi) \,\eta^{2k} - \sum_{j=0}^{2k-1} \abs{p_j(\xi)} \,\abs{\eta}^j
    \ge
    \epsilon \eta^{2k} -  \nu \sum_{j=0}^{2k-1} \abs{\eta}^j.
  \end{equation*}
 Now let $\lambda \in \RR$.
  Then there is a number $\rho \in {(0,\infty)}$ such that $\epsilon \eta^{2k} -  \nu \sum_{j=0}^{2k-1} \abs{\eta}^j > \lambda$
  for all $\eta \in J$ with $\abs{\eta} > \rho$. Hence
  $\set{(\xi,\eta)\in\Delta^n_\gamma(S)\times J}{p(\xi,\eta) \le \lambda}$ is a closed subset of $\Delta^n_\gamma(S) \times {[-\rho,\rho]}$
  and therefore it is compact.
\end{proof}

\begin{proof}[of Theorem~\ref{theorem:streifen}]
  \refitem{item:streifen:alg}$\implies$\refitem{item:streifen:ana}:  Assume that $a \in \epsilon (\Unit+z^2)^k + \S \otimes \Pos(J)$ for some $\epsilon >0$. Then 
   $\varphi(a(\eta)) \ge \epsilon (1+\eta^2)^k > 0$  for all $(\varphi,\eta) \in \characters(\A, \S) \times J$. Hence, for $\psi \in \characters(\A,\S)$ we obtain
  \begin{equation*}
    \psi(a_{2k}) = \lim_{|\eta| \to \infty,\, \eta \in J} ~ \eta^{-2k}\psi(a(\eta)) \ge  \epsilon \lim_{|\eta| \to \infty, \, \eta \in J}~ \eta^{-2k}(1+\eta^2)^k = \epsilon>0.
  \end{equation*}

  \refitem{item:streifen:ana}$\implies$\refitem{item:streifen:alg}:
  Assume now that $\varphi(a(\eta)) > 0$ for all $(\varphi,\eta) \in \characters(\A, \S) \times J$ and  $\psi(a_{2k}) > 0$ for all $\psi \in \characters(\A,\S)$.
  
  The first step is to lift $a$ and $\S$ to a polynomial algebra:
  Let $g_1, \dots, g_n \in \A$, $n\in \NN$, be a finite set of generators of $\A$. As $\S$ is Archimedean, there exist
  $\lambda_1, \dots, \lambda_n \in {(0,\infty)}$ such that $\lambda_j \Unit - g_j \in \S$ for  $j\in \{1,\dots,n\}$.
  We  define a unital algebra homomorphism  $\Phi \colon \RR[x_1,\dots,x_n] \to \A$ by $\Phi(x_j) \coloneqq \lambda_j \Unit - g_j$ for  $j\in \{1,\dots,n\}$.
  Clearly, $\Phi$ is surjective, so we find $\hat a_0, \dots, \hat a_{2k} \in \RR[x_1,\dots,x_n]$ such that $a_i = \Phi(\hat a_i)$ for all $i \in \{0,\dots,2k\}$.
 Now we define $\hat a \coloneqq \sum_{i=0}^{2k} \hat a_i z^i \in \RR[x_1,\dots,x_n,z]$.
  Then the preimage $\T \coloneqq \Phi^{-1}(\S)$  of $\S$ under $\Phi$ is an Archimedean semiring of $\RR[x_1,\dots,x_n]$ because $\S$ is Archimedean by assumption.
  Moreover, $x_j \in \T$ for  $j\in \{1,\dots,n\}$ by construction. Also, there  exists a number $\gamma \in {(0,\infty)}$ such that $\gamma\Unit - \sum_{j=1}^n x_j \in \T$.
  
  The next step is to reduce $\T$ to a finite set:
  Note that $\ker \Phi \subseteq \T$ by definition, so $\ker \Phi \subseteq \T \cap (-\T)$, and
  $q(\xi) = 0$ for all $q\in \ker \Phi$ and $\xi \in \Delta^n_\gamma(\T)$. Therefore, for every $\xi \in \Delta^n_\gamma(\T)$ there 
  exists a unique well-defined $\Phi_{*;\xi} \in \characters(\A,\S)$ given by $\Phi_{*;\xi}(\Phi(q)) = q(\xi)$ for all $q\in \RR[x_1,\dots,x_n]$.
  Consequently,
  \begin{equation*}
    \hat a(\xi,\eta) = \sum_{i=0}^{2k} \hat a_i(\xi) \eta^i = \sum_{i=0}^{2k} \Phi_{*;\xi}(a_i) \eta^i = \Phi_{*;\xi}(a(\eta)) > 0
  \end{equation*}
  for all $(\xi, \eta) \in \Delta^n_\gamma(\T) \times J$ and $\hat a_{2k}(\xi) = \Phi_{*;\xi}(a_{2k}) > 0$
  for all $\xi \in \Delta^n_\gamma(\T)$.
  Applying Lemma~\ref{lemma:ArchimedeanSemiringsOnFinitelyGeneratedAlgebras} to $\hat a_{2k}$ yields a finite subset $F'_\infty$ of $\T$
  such that $\hat a_{2k}(\xi) > 0$ holds for all $\xi \in \Delta^n_\gamma(F'_\infty)$.
  By Lemma~\ref{lemma:properpolynomial}, the set $\set{(\xi,\eta)\in \Delta^n_\gamma(F'_\infty) \times J}{\hat a(\xi,\eta) \le 1}$ is compact,
  so there exists $\rho \in {(0,\infty)}$ such that $\hat a(\xi,\eta)> 1$ for all $(\xi,\eta)\in \Delta^n_\gamma(F'_\infty) \times J$ with $\abs{\eta} > \rho$.
  Similarly, applying Lemma~\ref{lemma:ArchimedeanSemiringsOnFinitelyGeneratedAlgebras} to $\hat a(\argument,\eta) \in \RR[x_1,\dots,x_n]$ with arbitrary $\eta \in [-\rho,\rho] \cap J$
  yields finite subsets $F'_\eta$ of $\T$ such that $\hat a(\xi,\eta) > 0$ for all $\xi \in \Delta^n_\gamma(F'_\eta)$.
  As $\Delta^n_\gamma(F'_\eta)$ is compact and  the map $\Delta^n_\gamma(F'_\eta) \times \RR \ni (\xi,\mu) \to \hat a(\xi,\mu) \in \RR$ is continuous,
  there  exists an open neighbourhood $U_\eta$ of $\eta$ such that $\hat a(\xi,\mu) > 0$ for all $(\xi,\mu) \in \Delta^n_\gamma(F'_\eta)\times U_\eta$.
  By compactness of $[-\rho,\rho]\cap J$ there exist finitely many $\hat\eta_1, \dots, \hat\eta_\ell \in [-\rho,\rho] \cap J$, $\ell \in \NN_0$, such that
  $\bigcup_{j=1}^\ell U_{\hat\eta_j} \supseteq [-\rho,\rho]\cap J$. We set $F \coloneqq F'_\infty \cup \bigcup_{j=1}^\ell F'_j$. Then $\hat a_{2k}(\xi) > 0$ for all
  $\xi \in \Delta^n_\gamma(F)$ and $\hat a(\xi,\eta) > 0$ for all $(\xi,\eta) \in \Delta^n_\gamma(F) \times J$. Note that $F$ is a finite set by construction,
  say $F = \{f_1,\dots,f_m\}$ with $m \in \NN_0$.
  
  A modification of a trick used in
  \cite{averkov} will allow us to apply Proposition~\ref{proposition:deltastreifen}
  by introducing additional variables: As $\T$ is Archimedean, there exists $\delta \in {(0,\infty)}$ such that $\delta\Unit - \sum_{j=1}^m f_j \in \T$.
  We set
  \begin{align*}
    \Delta^{n+m}_{\gamma+\delta}
    &\coloneqq
    \set[\Big]{
      (\xi,\theta) \in {(0,\infty)}\,^n \times {(0,\infty)}\,^m
    }{
      \sum\nolimits_{j=1}^n \xi_j + \sum\nolimits_{j=1}^m \theta_j \le \gamma+\delta
    },
    \\
    Z
    &\coloneqq
    \set[\big]{
      (\xi,\theta) \in \Delta^{n+m}_{\gamma+\delta}
    }{
      f_j(\xi) = \theta_j \textup{ for all }j \in \{1,\dots,m\}
    }
    .
  \end{align*}
  Note that all $(\xi,\theta) \in Z$ fulfill $\xi \in \Delta^n_\gamma(F)$.
  We  define $p \coloneqq \sum_{j=1}^m (f_j - y_j)^2 \in \RR[x_1,\dots,x_n,y_1,\dots,y_m]$.
  Clearly, $p(\xi,\theta) \ge 0$ for all $(\xi,\theta) \in \Delta^{n+m}_{\gamma+\delta}$, and if $(\xi,\theta) \in \Delta^{n+m}_{\gamma+\delta}$ fulfills $p(\xi,\theta) = 0$,
  then $(\xi,\theta) \in Z$ and in particular $\xi \in \Delta^n_\gamma(F)$. This implies that $\hat a(\xi,\eta) > 0$ for all $\eta \in \RR$ and $\hat a_{2k}(\xi) > 0$.
  Lemma~\ref{lemma:addingstuff}, applied to the continuous functions $\Delta^{n+m}_{\gamma+\delta} \ni (\xi,\theta) \mapsto \hat a_{2k}(\xi) \in \RR$ and
  $\Delta^{n+m}_{\gamma+\delta} \ni (\xi,\theta) \mapsto p(\xi,\theta) \in \RR$, now shows that there exists $\lambda_\infty \in {(0,\infty)}$ such that
  \begin{equation} \label{eq:theorem:streifen:intern1}
    \hat a_{2k}(\xi) + \lambda_\infty p(\xi,\theta) > 0
    \tag{\textup{$*$}}
  \end{equation}
  holds for all $(\xi,\theta) \in \Delta^{n+m}_{\gamma+\delta}$.
 Therefore, Lemma \ref{lemma:properpolynomial}  applies to $\hat a + \lambda_\infty p z^{2k} \in \RR[x_1,\dots,x_n,y_1,\dots,y_m,z]$ and shows that
  $C \coloneqq \set{(\xi,\theta,\eta) \in \Delta^{n+m}_{\gamma+\delta} \times J}{\hat a(\xi,\eta) + \lambda_\infty p(\xi,\theta) \eta^{2k} \le 1}$
  is compact. By Lemma~\ref{lemma:addingstuff}, now applied to the continuous functions $C \ni (\xi,\theta,\eta) \mapsto \hat a(\xi,\eta) +\lambda_\infty p(\xi,\theta) \eta^{2k} \in \RR$ and
  $C \ni (\xi,\theta,\eta) \mapsto p(\xi,\theta) \in \RR$,  there exists a number $\lambda_0 \in {(0,\infty)}$ such that
  \begin{equation} \label{eq:theorem:streifen:intern2}
    \hat a(\xi,\eta) +\lambda_\infty p(\xi,\theta) \eta^{2k} + \lambda_0 p(\xi,\theta) > 0
    \tag{\textup{$**$}}
  \end{equation}
   for all $(\xi,\theta,\eta) \in C$, hence even for all $(\xi,\theta,\eta) \in \Delta^{n+m}_{\gamma+\delta} \times J$ by the definition of $C$.
  
  Finally, applying Proposition~\ref{proposition:deltastreifen} to $\hat a + \lambda_\infty p z^{2k} + \lambda_0 p \in \RR[x_1,\dots,x_n,y_1,\dots,y_m,z]$
  with estimates \eqref{eq:theorem:streifen:intern1} and \eqref{eq:theorem:streifen:intern2}, we obtain a number
  $\epsilon>0$ such that $\hat a + \lambda_\infty p z^{2k} + \lambda_0 p \in \epsilon (1+z^2)^k + \S_{\Delta;\gamma+\delta} \otimes \Pos(J)$,
  where $\S_{\Delta;\gamma+\delta}$ is the semiring of $\RR[x_1,\dots,x_n,y_1,\dots,y_m]$ that is generated by $x_1,\dots,x_n, y_1,\dots,y_m$ and 
  $\gamma+\delta-\sum_{j=1}^n x_j - \sum_{j=1}^m y_j$.
  Now let $\Psi \colon \RR[x_1,\dots,x_n,y_1,\dots,y_m,z] \to \A[z]$ be the unital algebra homomorpism  defined by 
  $\Psi(x_j) \coloneqq \Phi(x_j)$ for  $j\in \{1,\dots,n\}$, $\Psi(y_j) \coloneqq \Phi(f_j)$ for  $j\in \{1,\dots,m\}$, and $\Psi(z) \coloneqq z$.
  Then we have $\Psi(\hat a) = \Phi(\hat a) = a$ and $\Psi(f_j-y_j) = \Phi(f_j) - \Phi(f_j) = 0$ for  $j\in \{1,\dots,m\}$, hence $\Psi(p) = 0$,
  which shows that $\Psi(\hat a + \lambda_\infty p z^{2k} + \lambda_0 p) = a$.
  Moreover, $\Psi(x_j) = \Phi(x_j) = \lambda_j \Unit - g_j \in \S$  for  $j\in \{1,\dots,n\}$. Similarly $\Psi(y_j) = \Phi(f_j) \in \S$ because $f_j \in \T = \Phi^{-1}(\S)$
  for  $j\in \{1,\dots,m\}$, and
  \begin{equation*}
    \Psi\bigg(\gamma+\delta-\sum_{j=1}^n x_j - \sum_{j=1}^m y_j\bigg) = \Phi\Big( \underbrace{\gamma + \delta - \sum\nolimits_{j=1}^n x_j - \sum\nolimits_{j=1}^m \Phi(f_j)}_{\in \T} \Big) \in \S
  \end{equation*}
  by the construction of $\gamma, \delta,$ and $\T$. This shows that the image of $\S_{\Delta;\gamma+\delta}$ under $\Psi$ is a subset of $\S$, so
  $a = \Psi(\hat a + \lambda_\infty p z^{2k} + \lambda_0 p) \in \epsilon (1+z^2)^k + \S\otimes \Pos(J)$.
\end{proof}

\section*{Acknowledgment}
The research of M. S.\, was supported by the Deutsche Forschungsgemeinschaft (SCHM1009/6-1).

\end{onehalfspace}

\bibliographystyle{amsalpha}

\begin{thebibliography}{A}
\bibitem[Av13]{averkov}
Averkov, G.: Constructive proofs of some Positivstellens{\"a}tze for
  compact semialgebraic subsets of $R^d$, J. Optim. Theory
  Appl. \textbf{158} (2013), 410--418.

\bibitem[BS83]{beckers}
Becker, E. and N. Schwartz: Zum Darstellungssatz von Kadison--Dubois, Arch. Math.
 {\bf 40}(1983), 42--428.
\bibitem[C09]{cimpric09} 
Cimpri\v{c}, J.: A representation theorem for quadratic modules on $*$-rings, Canad. Math. Bulletin {\bf 52}(2009), 39--52.
\bibitem[CMN09]{cmn} 
Cimpri\v{c}, J., M. Marshall and T. Netzer: Closures of quadratic modules, Israel J. Math. {\bf 183}(2011), 445–-474.
\bibitem[H36]{Haviland}
Haviland, E. K.:
On the momentum problem for distribution functions in more than one
dimension: I. Amer.\ J. Math.\ {\bf 57}(1935), 562--582, and~ II. {\bf 58}(1936), 164--168.
\bibitem[J01]{jacobi} 
Jacobi, T.: A representation theorem for certain partially ordered commutative rings, Math. Z. {\bf 23}(2001), 259--273.
\bibitem[J42]{jung} 
Jung, H.W.E., \"Uber ganze birationale Transformationen der Ebene, J. reine Angew. Math. {\bf 184}(1942), 161--174.
\bibitem[Kv64]{krivine}
Krivine, J.L.: Quelques propri\'et\'es des pr\'eordres dans les anneaux commutatifs unitaires,
C. R. Acad. Sci. Paris {\bf 258}(1964), 3417--3418.

\bibitem[KM02]{kumarshall}
Kuhlmann, S. and M. Marshall: Positivity, sums of squares and the multi-dimensional moment problem, Trans. Amer. Math. Soc. {\bf 354}(2002),  4285--4302.

\bibitem[KMS05]{kumsch}
Kuhlmann, S.,  M. Marshall, and N. Schwartz: Positivity, sums of squares and the multi-dimensional moment problem. II, Adv. Geom. {\bf 5}(2005), 583--606.

\bibitem[M01]{M2}  Marshall, M.: Extending the Archimedean Positivstellensatz to the non-compact case. Bull. Canad. Math. Soc. {\bf{44}} (2001), 223--230.

\bibitem[M08]{marshall} Marshall, M.: {\it Positive Polynomials and Sums of Squares}, Math. Surveys and Monographs, Amer. Math. Soc., Providence, R.I., 2008.


\bibitem[M10]{marshall10} Marshall, M.: Polynomials non-negative on a strip, Proc. Amer. Math. soc. {\bf 138}(2010), 1559--1567.

\bibitem[N09]{netzer} Netzer, T.: Positive polynomials and sequential closures of quadratic modules, Trans. Amer. Math. Soc. {\bf 362}(2009), 2619--2639.

\bibitem[Po04]{powers} Powers, V.: Positive polynomials and the moment problem for cylinders with compact cross-section, J. Pure Applied Algebra {\bf 188}(2004), 217--226.
\bibitem[Pu94]{put94}
Putinar, M.: Positive polynomials on compact semialgebraic sets, Indiana Univ. Math. J. {\bf 42}(1994),  969--984.

\bibitem[PD]{PD} A. Prestel and C.N. Delzell: \textit{Positive Polynomials}, Monographs in Math., Springer-Verlag, Berlin, 2001.

\bibitem[Sch09]{scheiderer}
 Scheiderer, C.: Positivity and sums of squares: A guide to recent results, In: {\it Emerging Applications of Algebraic Geometry},  M. Putinar and S. Sullivant (Editors), Springer-Verlag, New York,  2009, 271--324. 

\bibitem[S91]{sch91}  Schm\"udgen, K.: The $K$-moment problem for compact semialgebraic sets, Math. Ann. {\bf{289}}(1991), 203--206.
\bibitem[S09]{sch09}
Schm\"udgen, K.: Noncommutative real algebraic geometry--some basic concepts and first ideas, In: {\it Emerging Applications of Algebraic Geometry},  M. Putinar and S. Sullivant (Editors), Springer, New York, 2009, 325--350.

\bibitem[S17]{sch17} Schm\"udgen, K.: {\it The Moment Problem}, Graduate Texts in Math., Springer-Verlag, Cham, 2017.

\bibitem[S20]{sch20} Schm\"udgen, K.: {\it An Invitation to Unbounded Representations of $*$-Algebras on Hilbert Space}, Graduate Texts in Math.,   Springer-Verlag, Cham, 2020.


\bibitem[Schw02]{schw02}  Schweighofer, M.: An algorithmic approach to   Schm\"udgen's Positivstellensatz, J. Pure Applied Algebra {\bf 166}(2002), 307--319.
\bibitem[Schw03]{schw03}  Schweighofer, M.: Iterated rings of bounded elements and generalizations of Schm\"udgen's Positivstellensatz, J. reine angew. Math. {\bf 554}(2003), 19--45.

\bibitem[W\"o98]{wm}
 W\"ormann, T.:  Strict positive Polynome in der semialgebraischen Geometrie, Dissertation, Universit\"at Dortmund, 1998.

\end{thebibliography}

\end{document}